\tikzstyle{empty}=[shape=circle, tikzit fill={rgb,255: red,191; green,191; blue,191}]
\tikzstyle{dot}=[fill=black, draw=black, shape=circle]
\tikzstyle{hook}=[right hook->, draw=black, tikzit draw=magenta]
\tikzstyle{mono2}=[draw=black, >->]
\tikzstyle{epi2}=[draw=black, ->>]
\tikzstyle{to}=[draw=black, ->]
\tikzstyle{equal-arrow}=[-, double equal sign distance]
\tikzstyle{dashed-arrow}=[dashed, ->]
\tikzstyle{epi}=[draw=black, -, epi2]
\tikzstyle{mono}=[draw=black, -, mono2]
\tikzstyle{hook}=[draw=black, -, right hook->]
\theoremstyle{plain}
\newtheorem{theorem}{Theorem}[section]
\newtheorem{lemma}[theorem]{Lemma}
\newtheorem{proposition}[theorem]{Proposition}
\newtheorem{corollary}[theorem]{Corollary}
\theoremstyle{definition}
\newtheorem{definition}[theorem]{Definition}
\newtheorem{example}[theorem]{Example}
\newtheorem{construction}[theorem]{Construction}
\theoremstyle{remark}
\newtheorem{remark}[theorem]{Remark}
\title{The Category of Operator Spaces and Complete Contractions}
\author[1]{Bert Lindenhovius}
\author[2]{Vladimir Zamdzhiev}
\affil[1]{Johannes Kepler University, Institute for Mathematical Methods in Medicine and Data Based Modeling, Altenberger Str. 69, 4040 Linz, Austria}
\affil[2]{Université Paris-Saclay, CNRS, ENS Paris-Saclay, Inria, Laboratoire Méthodes Formelles, 91190, Gif-sur-Yvette, France}
\date{}
\begin{document}

\maketitle

\begin{abstract}
  We show that the category $\OS$ of operator spaces, with complete
  contractions as morphisms, is locally countably presentable. This result,
  together with its symmetric monoidal closed structure with respect to the
  projective tensor product of operator spaces, implies the existence of cofree
  (cocommutative) coalgebras with respect to the projective tensor product and
  therefore provides a mathematical model of Intuitionistic Linear Logic
  in the sense of Lafont.
\end{abstract}

\section{Introduction}
\label{sec:intro}

Operator spaces \cite{effros-ruan,blecher-merdy,pisier} are 
structures in noncommutative geometry that generalise mathematical objects such
as von Neumann algebras, C*-algebras, and operator systems, all of which are
relevant to the study of quantum information. The increased generality
provided by operator spaces allows us to cover even more spaces that are
relevant in quantum information theory (e.g. the space of trace-class operators
on a Hilbert space) and to study them with the methods and techniques from
noncommutative geometry. Operator spaces have been widely recognised as the
noncommutative (or quantised) analogue of Banach spaces and many results from
Banach space theory have been generalised to the theory of operator spaces. In
this paper, we show that an important categorical property of Banach spaces --
local presentability -- also carries over to operator spaces and we showcase
some of the mathematical implications of this fact. This is a very strong
categorical property and we show that by using it, we can prove the existence
of cofree (cocommutative) coalgebras with respect to the projective tensor
product of operator spaces. 

Locally presentable categories \cite{gabriel-ulmer,lp-categories-book,borceux2}
are particularly well-behaved categories that have a rich mathematical
structure. Every locally presentable category $\CC$ is complete, cocomplete,
well-powered, cowell-powered, has a (strong epi, mono)-factorisation system,
has an (epi, strong mono)-factorisation system, and it also enjoys many other
mathematical properties (see \cite{lp-categories-book} for a textbook account).
Another one of these pleasant properties is that locally presentable
categories have powerful adjoint functor theorems that allow us to easily
establish adjunctions. In particular, given a functor $F \colon \CC \to \DD$
between a locally presentable category $\CC$ and a locally small category
$\DD$, then it follows that $F$ is a left adjoint iff $F$ is cocontinuous. In
the sequel, we show how these adjoint functor theorems can be used to prove
some results about operator spaces that are far from obvious.

The category of primary interest in this paper is $\OS$, the category whose
objects are the operator spaces and whose morphisms are the linear completely
contractive maps between them. We would like to explain why we choose the
category $\OS$, instead of the category $\OScb$, whose objects are the operator
spaces but whose morphisms are the linear completely bounded maps. Simply put,
$\OS$ has nicer categorical properties. More specifically, $\OScb$ does not
have small coproducts, so it cannot be locally presentable, whereas $\OS$
is indeed locally presentable (and therefore has small colimits). Another
disadvantage from a categorical viewpoint is that two finite-dimensional
operator spaces are categorically isomorphic in $\OScb$ iff they are linearly
isomorphic as vector spaces -- this shows that much of the operator space
structure is ignored by the morphisms in $\OScb$. This is not the case in $\OS$
-- indeed the categorical isomorphisms in $\OS$ coincide with the completely
isometric isomorphisms of operator spaces, so they preserve all of the
structure of an (abstract) operator space. 

The commutative (or classical) counterpart of $\OS$ is the category $\Ban$, whose objects are
the Banach spaces over the complex numbers and whose morphisms are the linear
contractions between them. It is well-known that $\Ban$ is locally countably presentable
\cite{lp-categories-book}, \cite{borceux2}\footnote{In \cite{borceux2}, the proof is given for
Banach spaces over the reals, but the same proof also works for Banach spaces
over the complex numbers.}, however, there is a notable difference between the
locally presentable structures of $\OS$ and $\Ban$ that we wish to emphasise. The
Banach space $\mathbb C$ is a strong generator (in the categorical sense) in
$\Ban$, whereas the operator space $\mathbb C$ is not a strong generator in
$\OS.$ A strong generating set in $\OS$ is given by $\{T_n \ |\ n \in \mathbb
N\}$, i.e. the set of finite-dimensional trace-class operator spaces. Another
(singular) strong generator is the operator space $T(\ell_2)$ consisting of the
trace-class operators on the Hilbert space $\ell_2.$

A category $\CC$ is locally countably presentable iff $\CC$ is cocomplete and
has a strong generating set consisting of countably-presentable objects. In
order to show that each operator space $T_n$ is countably-presentable (in the
categorical sense), we use some facts about countably-directed colimits in
$\Ban$ that are provided in \cite{borceux2}, but we also prove additional
results of our own related to these colimits that we need for our later
development. Building on this, we prove that the countably-presentable objects
in $\Ban$ are precisely the separable Banach spaces (Theorem
\ref{thm:separable-banach}). This result is not surprising, but we were not
able to find a proof of this fact anywhere in the literature, so we decided to
present one. By further developing these results to the noncommutative setting
of operator spaces, we show that the countably-presentable objects in $\OS$ are
precisely the separable operator spaces (Theorem \ref{thm:presentable-objects})
and this then allows us to conclude that the aforementioned strong generators
are countably-presentable and therefore $\OS$ is locally countably presentable
(Theorem \ref{thm:os-locally-presentable}).

In order to demonstrate the mathematical utility of locally presentable
categories, we highlight a result that we think would be more difficult to
prove for operator spaces without the use of categorical methods.
If $\CC$ is a locally presentable symmetric monoidal closed
category, then there exists an adjunction
\begin{equation}
  \label{eq:coalg-adjunction}
  \stikz{free-coalgebra-adjunction.tikz}
\end{equation}
where the category $\CoCoalg$ is the category of cocommutative coalgebras over
$\CC$, the functor $U$ is the obvious forgetful one, and where $R$ is its \emph{right}
adjoint, obtained through the use of the adjoint functor theorem. The analogous
statement holds if the category $\CoCoalg$ is replaced by the category $\Coalg$
of coalgebras (not necessarily cocommutative) over $\CC.$ In fact, the
categories $\CoCoalg$ and $\Coalg$ are also locally presentable, the category
$\Coalg$ is symmetric monoidal closed and $\CoCoalg$ is even cartesian closed.
The interested reader may find proofs of these results in
\cite{porst-coalgebras}. This makes the adjoint situation in
\eqref{eq:coalg-adjunction} very nice from a categorical viewpoint and this
furthermore gives us the structure of a mathematical model of Intuitionistic
Linear Logic \cite{linear-logic} in the sense described by Lafont
\cite{lafont-thesis}. Using the aforementioned adjunctions, it follows that if
$A$ is an object of $\CC$, then $RA$ is the cofree (cocommutative)
coalgebra\footnote{Some authors refer to $RA$ as the free (commutative)
coalgebra, but the mathematical meaning coincides with the one we present
here.} over $A$. The mathematical meaning of the cofreeness of $RA$ is given by
the universal property of the adjunction \eqref{eq:coalg-adjunction}. Let us
now consider a few choices for the category $\CC$ that might be more familiar
to readers with a background in functional analysis and linear algebra.

Taking $\CC = \Vect$, the category of vector spaces over $\mathbb C$ and linear maps, together with its usual closed monoidal structure given by
the algebraic tensor product $\otimes$, we recover the usual notion of cofree
(cocommutative) coalgebra. The construction of these cofree (cocommutative)
coalgebras is already known and it is described by Sweedler \cite{sweedler}.
Sweedler does not make use of the adjoint functor theorem or any
explicit use of category theory. Taking $\CC = \Ban$ together with its closed
monoidal structure given by the projective tensor product, we recover the
existence of the cofree (cocommutative) coalgebras with respect to this tensor
product. Other authors \cite{fox-thesis,barr-coalgebras} have already made this
observation, but their proofs also use the locally presentable structure of the
category $\Ban$. To the best of our knowledge, no other proof of this result is
known. Taking $\CC = \OS$ together with its closed monoidal structure given by
projective tensor product of operator spaces, we see that cofree
(cocommutative) coalgebras with respect to this tensor product exist. This is a
novel result that now follows from \cite{porst-coalgebras} using the locally
presentable structure of $\OS$ (proven in this paper) together with its closed
symmetric monoidal structure given by the projective tensor product of operator
spaces.

In Section \ref{sec:categorical-background}, we provide background on locally
presentable categories. In Section \ref{sec:banach}, we recall the construction
of countably-directed colimits in $\Ban$ and we establish some new results that
allow us to characterise the countably-presentable objects in $\Ban$ as exactly
the separable ones (Theorem \ref{thm:separable-banach}).  In Section
\ref{sec:os}, we prove the main results of the paper, namely that the category
$\OS$ is locally countably presentable (Theorem
\ref{thm:os-locally-presentable}) with a characterisation of the
countably-presentable objects being exactly the separable operator spaces
(Theorem \ref{thm:presentable-objects}). In Section \ref{sec:coalgebras}, we
use the local presentability of $\OS$ to prove the existence of cofree
(cocommutative) coalgebras of operator spaces with respect to the projective
tensor product. Finally, in Section \ref{sec:conclusion}, we provide some
concluding remarks.

\section{Categorical Preliminaries}
\label{sec:categorical-background}

In this section we recall some background on locally presentable categories
that we need for our development. We assume prior knowledge of basic
categorical notions such as functors, (co)limits, natural transformations, etc.
Locally presentable categories were originally introduced by Gabriel and Ulmer
\cite{gabriel-ulmer}. The background that we present here is mostly based on
the textbook accounts in \cite{lp-categories-book,borceux2}.

\begin{definition}[$\alpha$-directed Poset]
  A partially ordered set (poset) $\Lambda$ is \emph{directed} whenever every finite
  subset $X \subseteq \Lambda$ has an upper bound in $\Lambda.$ If $\alpha$ is a regular
  cardinal, we say that a poset $\Lambda$ is \emph{$\alpha$-directed} whenever every
  subset $X \subseteq \Lambda,$ with cardinality strictly smaller than $\alpha$, has
  an upper bound in $\Lambda.$
\end{definition}

In particular, the notion of directed set coincides with that of an
$\aleph_0$-directed set. We also say that an $\aleph_1$-directed set is
\emph{countably directed}. Indeed, an equivalent way to say that $\Lambda$ is
$\aleph_1$-directed is to require that every countable subset $X \subseteq \Lambda$
has an upper bound in $\Lambda.$ We also wish to note that the notion of
$\alpha$-directedness does not impose a limit on the cardinality of $\Lambda$, but
rather it is a condition regarding the shape of $\Lambda.$ For instance,
a countably-directed poset is indeed directed, but it does not have to be countable.

\begin{example}
  The natural numbers with the usual order $(\mathbb N, \leq)$ is a directed
  poset, but it is not countably-directed. The unit interval with its standard
  order $([0,1], \leq)$ is countably-directed. Another example is the poset
  $(\mathbb N^{\top}, \leq)$ consisting of the natural numbers with a freely
  added top element $\top$ with the obvious order.
\end{example}

\begin{definition}[$\alpha$-directed Diagram]
  \label{def:countably-directed-colimit}
  An \emph{$\alpha$-directed diagram} in a category $\CC$ is a functor $D
  \colon \Lambda \to \CC,$ where $\Lambda$ is an $\alpha$-directed poset viewed
  as a posetal category in the obvious way. When $\alpha = \aleph_0$, we say
  that $D$ is a \emph{directed diagram} and when $\alpha = \aleph_1$ we say that $D$
  is a \emph{countably-directed} diagram.
\end{definition}

Our proof strategy for showing the local presentability of $\OS$
relies on giving concrete descriptions of countably-directed colimits
over operator spaces, so we introduce some notation for working with such
colimits. Given an $\alpha$-directed diagram $D \colon \Lambda \to \CC$ and
elements $\lambda, \kappa \in \Lambda$ such that $\lambda \leq \kappa$, we
write $D_\lambda \eqdef D(\lambda) \in \Ob(\CC)$ for the object assignment of the diagram in $\CC$
and we write $D_{\lambda,\kappa} \eqdef D(\lambda \leq \kappa) \colon D_\lambda \to D_\kappa$
for the morphism assignment of the diagram in $\CC.$ A colimiting cocone $(C, \{c_\lambda \colon D_\lambda \to C\}_{\lambda \in \Lambda} )$
of such a diagram is called an $\alpha$-\emph{directed colimit}.

An important concept in the theory of locally presentable categories is the
idea of an $\alpha$-presentable object that we define next. These objects are
particularly well-behaved in the theory.

\begin{definition}
  \label{def:presentable-object}
  Let $\alpha$ be a regular cardinal. An object $A$ of a category $\CC$ is
  \emph{$\alpha$-presentable} whenever the hom-functor $\CC(A,-) \colon \CC \to
  \Set$ preserves $\alpha$-directed colimits. When $\alpha = \aleph_0$ we say
  that $A$ is \emph{finitely-presentable} and when $\alpha = \aleph_1$ we say
  that $A$ is \emph{countably-presentable}.
\end{definition}

\begin{example}
  \label{ex:presentable-objects}
  In the category $\Set$, the finitely-presentable objects are the
  finite sets and the countably-presentable objects are the countable
  sets. In the category $\Vect_{\mathbb K}$ of vector spaces over a field
  $\mathbb K$, the finitely-presentable objects are the finite-dimensional
  vector spaces, whereas the countably-presentable objects are the
  countably-dimensional vector spaces \cite{finitary-functors}. However,
  this correspondence is not as straightforward in other categories. In
  the category $\Top$ of topological spaces (with continuous maps as morphisms),
  the finitely-presentable objects are the finite spaces equipped with the discrete
  topology. In the category $\Ban$, the only finitely-presentable object is the
  zero-dimensional Banach space (see Section \ref{sec:banach}). Similarly, the
  only finitely-presentable object in $\OS$ is the zero-dimensional operator
  space (see Section \ref{sec:os}). In the sequel we prove that the countably-presentable
  objects in $\OS$ ($\Ban$) are precisely the separable operator spaces (Banach spaces).
\end{example}

For our development, countably-presentable objects are particularly important.
An equivalent way of saying that $A$ is countably-presentable is to require
that the functor $\CC(A,-) \colon \CC \to \Set$ preserves countably-directed
colimits. Furthermore, writing the condition in Definition
\ref{def:presentable-object} in more detail, we find that it can be formulated
within the category $\CC$ entirely. This is made precise by the following
well-known proposition (see \cite[Proposition 5.1.3]{borceux2}\footnote{This is
stated for $\alpha$-filtered colimits, but the statement also holds for
$\alpha$-directed colimits.}).

\begin{proposition}
  \label{prop:presentable-object}
  An object $A$ of a category $\CC$ is $\alpha-$presentable iff for every
  $\alpha$-directed diagram $D \colon \Lambda \to \CC$, for every colimiting
  cocone $(C, \{ c_\lambda \colon D_\lambda \to C\}_{\lambda \in \Lambda})$ of
  $D$, and for every morphism $f \colon A \to C,$ there must exist $\lambda \in
  \Lambda$, such that:
  \begin{enumerate}
    \item $f = c_\lambda \circ g$ for some $g \colon A \to D_\lambda$; and
    \item this factorisation is essentially unique in the following sense: for any two morphisms $g,g' \colon A \to D_\lambda$, if $c_\lambda \circ g = c_\lambda \circ g'$, then there exists $\tau \in \Lambda$ such that
      $\lambda \leq \tau$ and such that $D_{\lambda,\tau} \circ g = D_{\lambda,\tau} \circ g'.$
  \end{enumerate}
\end{proposition}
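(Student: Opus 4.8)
The plan is to reduce the statement to the explicit construction of $\alpha$-directed colimits in $\Set$ and then read off conditions (1) and (2) as surjectivity and injectivity of the canonical comparison map. First I would recall that for an $\alpha$-directed diagram of sets---here the composite $\lambda \mapsto \CC(A, D_\lambda)$---the colimit is computed as the set of equivalence classes of pairs $(\lambda, g)$ with $g \in \CC(A, D_\lambda)$, where $(\lambda, g) \sim (\kappa, h)$ iff there is some $\tau \in \Lambda$ with $\lambda, \kappa \leq \tau$ and $D_{\lambda,\tau} \circ g = D_{\kappa,\tau} \circ h$. The colimiting cocone $(C, \{c_\lambda\})$ induces a cocone on $\lambda \mapsto \CC(A, D_\lambda)$ with apex $\CC(A, C)$, and hence a unique comparison map
\begin{equation*}
  \Phi \colon \colim_{\lambda \in \Lambda} \CC(A, D_\lambda) \longrightarrow \CC(A, C), \qquad [(\lambda, g)] \longmapsto c_\lambda \circ g.
\end{equation*}
By Definition \ref{def:presentable-object}, $A$ is $\alpha$-presentable precisely when $\Phi$ is a bijection for every $\alpha$-directed diagram and every colimiting cocone; it therefore suffices to show that surjectivity of $\Phi$ is equivalent to condition (1) and injectivity of $\Phi$ is equivalent to condition (2).

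The surjectivity half is immediate: $\Phi$ is surjective exactly when every morphism $f \colon A \to C$ lies in its image, i.e. when every such $f$ can be written as $c_\lambda \circ g$ for some $\lambda \in \Lambda$ and some $g \colon A \to D_\lambda$, which is precisely the factorisation asserted in condition (1).

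For the injectivity half I would argue through the explicit equivalence relation above, using directedness of $\Lambda$ to reduce any comparison to a common domain index. Given representatives $(\lambda, g)$ and $(\mu, g')$ with $\Phi[(\lambda,g)] = \Phi[(\mu,g')]$, directedness lets me pick $\kappa \geq \lambda, \mu$ and replace $g, g'$ by $D_{\lambda,\kappa} \circ g$ and $D_{\mu,\kappa} \circ g'$; the cocone identities $c_\kappa \circ D_{\lambda,\kappa} = c_\lambda$ and $c_\kappa \circ D_{\mu,\kappa} = c_\mu$ guarantee that this changes neither the colimit classes nor their images under $\Phi$, so I may assume both morphisms share the domain $D_\lambda$. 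With this reduction in place, injectivity of $\Phi$ states exactly that whenever $g, g' \colon A \to D_\lambda$ satisfy $c_\lambda \circ g = c_\lambda \circ g'$, the classes $[(\lambda, g)]$ and $[(\lambda, g')]$ coincide, which by the description of $\sim$ means there is some $\tau \geq \lambda$ with $D_{\lambda,\tau} \circ g = D_{\lambda,\tau} \circ g'$---and this is condition (2).

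The main obstacle is precisely this injectivity step: condition (2) is phrased only for pairs of morphisms sharing a single domain $D_\lambda$, whereas injectivity of $\Phi$ a priori concerns representatives living over different indices, so the argument hinges on the reduction to a common index, which is exactly what $\alpha$-directedness (indeed, ordinary directedness) of $\Lambda$ provides. Everything else is a transcription of the standard formula for directed colimits in $\Set$, with no genuinely hard categorical or analytic content.
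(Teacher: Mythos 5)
The paper gives no proof of this proposition at all---it is quoted as well known, with a pointer to \cite[Proposition 5.1.3]{borceux2}---and your argument is exactly the standard proof found there: compute the $\alpha$-directed colimit of the sets $\CC(A,D_\lambda)$ via the quotient-of-a-disjoint-union formula in $\Set$, and identify surjectivity and injectivity of the comparison map $\Phi$ with conditions (1) and (2), using directedness to bring two representatives to a common index. Your proof is correct; the only (minor) point worth noting is that what you prove verbatim is the formulation in which (2) holds at \emph{every} $\lambda$ (as in Borceux), which immediately gives the paper's phrasing with a single $\lambda$ witnessing both conditions, while conversely the paper's literal phrasing recovers yours by applying its hypothesis to the restricted diagram over the cofinal, still $\alpha$-directed, subposet $\{\tau\in\Lambda \mid \lambda_0\leq\tau\}$, so the two readings are equivalent.
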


Locally presentable categories may be defined in many equivalent ways. A
standard way of doing so is given by the next definition (see \cite[pp. 21]{lp-categories-book}).

\begin{definition}[Locally $\alpha$-Presentable Category]
\label{def:locally-presentable}
  Let $\alpha$ be a regular cardinal. A category $\CC$ is \emph{locally $\alpha$-presentable} if:
  \begin{enumerate}
    \item $\CC$ is cocomplete;
    \item every object of $\CC$ is an $\alpha$-directed colimit of $\alpha$-presentable objects;
    \item there exists, modulo isomorphism, only a (small) set of $\alpha$-presentable objects of $\CC.$
  \end{enumerate}
  A category $\CC$ is \emph{locally presentable} if it is locally
  $\alpha$-presentable for some regular cardinal $\alpha.$
\end{definition}

We also present another equivalent way of defining locally $\alpha$-presentable
categories. The equivalent definition allows us to more easily highlight some
important differences between $\Ban$ and $\OS$ in terms of their structure as
locally presentable categories. The most notable difference is given by the
\emph{strong generators} of the categories, a notion that we recall next.

\begin{definition}
  A \emph{generating set} for a category $\CC$ is a (small) set of objects
  $\mathcal S \subseteq \Ob(\CC)$, such that for any pair of \emph{distinct}
  parallel morphisms $f, g \colon A \to B$ of $\CC$, there exists an object $S
  \in \mathcal S$ and a morphism $s \colon S \to A$ with the property that $f
  \circ s \neq g \circ s.$ A generating set $\mathcal S$ is \emph{strong}
  whenever the following condition holds: for any proper monomorphism $m \colon
  A \to B$ (i.e. a monomorphism that is not an isomorphism), there exists an
  object $S \in \mathcal S$ and a morphism $f \colon S \to B$ which does not
  factorise through $m$. An object $S$ is called a \emph{(strong) generator} if
  the singleton $\{ S \}$ is a (strong) generating family.
\end{definition}

\begin{example}
  \label{ex:strong-generators}
  In the category $\Set$, the terminal object $1$ (any singleton set) is a
  strong generator. In the category $\Vect_{\mathbb K}$, the field $\mathbb K$
  is a strong generator. In the category $\mathbf{Top}$, the terminal object 1
  (a singleton set with the unique choice of topology) is a generator, but it
  is not strong. In fact, $\mathbf{Top}$ does not have a strong generating set
  at all. In the category $\Ban$, the complex numbers $\mathbb C$ is a strong
  generator, whereas in $\OS$, the operator space $\mathbb C$ is a generator,
  but it is not strong. In the sequel, we prove that the set $\{T_n \ |\ n \in
  \mathbb N\}$, i.e. the finite-dimensional trace-class operator spaces, is a
  strong generating set for $\OS$. We also prove that the operator space
  $T(\ell_2)$, i.e. the trace-class operators on $\ell_2$, is a strong
  generator.
\end{example}

Another well-known way of introducing locally presentable categories is given
by the following proposition.

\begin{proposition}
  \label{prop:locally-presentable-by-generators}
  Let $\CC$ be a cocomplete category. Then $\CC$ is $\alpha$-presentable iff
  $\CC$ has a strong generating set consisting of $\alpha$-presentable objects.
\end{proposition}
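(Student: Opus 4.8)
The plan is to prove the two implications separately, reading ``$\CC$ is locally $\alpha$-presentable'' in the sense of Definition~\ref{def:locally-presentable} and abbreviating ``$\alpha$-presentable'' to ``presentable''; throughout I use the standard fact that presentable objects are closed under $\alpha$-small colimits, together with Proposition~\ref{prop:presentable-object}. For the forward implication, assume $\CC$ is locally $\alpha$-presentable and let $\mathcal S$ be a representative set of all presentable objects, which exists by condition~3 of Definition~\ref{def:locally-presentable}. To see that $\mathcal S$ generates, given distinct parallel $f,g\colon A\to B$ write $A$ as an $\alpha$-directed colimit of presentable objects (condition~2) with colimiting cocone $(c_\lambda)$; since such a cocone is jointly epimorphic, $f c_\lambda=g c_\lambda$ for all $\lambda$ would force $f=g$, so some leg $c_\lambda\colon D_\lambda\to A$ with $D_\lambda\in\mathcal S$ separates $f$ and $g$. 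For strongness, let $m\colon A\to B$ be a proper monomorphism and suppose every morphism from an object of $\mathcal S$ into $B$ factors through $m$; writing $B$ as an $\alpha$-directed colimit $(c_\lambda\colon D_\lambda\to B)$ of presentable objects, each $c_\lambda$ factors as $m h_\lambda$ with $h_\lambda$ unique (as $m$ is monic), so $(h_\lambda)$ is a cocone inducing $h\colon B\to A$ with $mh=\id_B$ and then $hm=\id_A$, contradicting properness. Hence $\mathcal S$ is a strong generating set of presentable objects.

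The converse is the substantial direction. Let $\mathcal G$ be the given strong generating set of presentable objects and let $\mathcal A$ be the closure of $\mathcal G$ under $\alpha$-small colimits in $\CC$. Every object of $\mathcal A$ is presentable, and a transfinite bounding argument shows $\mathcal A$ is essentially small. For a fixed object $X$ I would form the \emph{canonical diagram} $D_X\colon \mathcal A/X\to\CC$, $(A,a)\mapsto A$, where $\mathcal A/X$ is the comma category of maps into $X$ with domain in $\mathcal A$. Closure of $\mathcal A$ under $\alpha$-small colimits makes $\mathcal A/X$ $\alpha$-filtered (equivalently, one passes to a cofinal $\alpha$-directed subposet). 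Writing $C\eqdef\colim D_X$ with legs $c_{(A,a)}$ and letting $\phi\colon C\to X$ be the comparison determined by $\phi c_{(A,a)}=a$, the goal is to show $\phi$ is an isomorphism; this then exhibits $X$ as an $\alpha$-directed colimit of presentable objects, giving condition~2.

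The heart of the argument is the isomorphism claim for $\phi$. First I would show $\phi$ is monic. Since $\mathcal G$ generates, it suffices to take $G\in\mathcal G$ and $p,q\colon G\to C$ with $\phi p=\phi q$ and deduce $p=q$. As $G$ is presentable and $C$ is an $\alpha$-filtered colimit, $p$ and $q$ factor through a common stage, say $p=c_{(A,a)}\tilde p$ and $q=c_{(A,a)}\tilde q$ with $\tilde p,\tilde q\colon G\to A$; then $a\tilde p=\phi p=\phi q=a\tilde q$. Forming the coequalizer $e\colon A\to A'$ of $\tilde p,\tilde q$ (an $\alpha$-small colimit, so $A'\in\mathcal A$), the equality $a\tilde p=a\tilde q$ yields $a'\colon A'\to X$ with $a'e=a$, so $e$ is a morphism $(A,a)\to(A',a')$ in $\mathcal A/X$ and hence $c_{(A',a')}e=c_{(A,a)}$. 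Since $e\tilde p=e\tilde q$, we conclude $p=c_{(A,a)}\tilde p=c_{(A',a')}e\tilde p=c_{(A',a')}e\tilde q=q$. Finally, every $g\colon G\to X$ with $G\in\mathcal G$ factors through $\phi$ via $c_{(G,g)}$, so $\phi$ is a monomorphism through which all morphisms out of $\mathcal G$ factor; by the strong generating property $\phi$ cannot be proper, hence is an isomorphism.

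It remains to verify condition~3. For any presentable object $P$, applying the isomorphism $X\cong\colim D_X$ with $X=P$ and using presentability of $P$ lets $\id_P$ factor through some leg, exhibiting $P$ as a retract of an object of $\mathcal A$; since $\mathcal A$ is essentially small and the retracts of a fixed object are classified up to isomorphism by idempotents in a hom-set, the presentable objects form a set up to isomorphism. The main obstacle is precisely the isomorphism claim for $\phi$, and within it the monicity step, since that is where $\alpha$-presentability, closure under $\alpha$-small colimits, and $\alpha$-filteredness must be combined before the strong generating hypothesis finishes the argument; the essential-smallness bookkeeping for $\mathcal A$ and for the presentable objects is routine but must be handled with some care.
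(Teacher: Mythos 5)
Your proof is correct, but there is nothing in the paper to compare it against: the paper states this proposition without proof, as a well-known characterisation (it is essentially Theorem~1.20 of \cite{lp-categories-book}, also treated in \cite{borceux2}), and your argument is a faithful reconstruction of that standard textbook proof. The forward direction (representative set of $\alpha$-presentable objects; joint epimorphy of colimit cocones for generation; unique factorisations $c_\lambda = m h_\lambda$ assembling into a splitting of $m$ for strongness) is sound, and you correctly use the paper's definition of strong generating set via proper monomorphisms. In the converse, the decisive steps are all in place: the closure $\mathcal A$ of $\mathcal G$ under $\alpha$-small colimits, $\alpha$-filteredness of $\mathcal A/X$, the coequalizer trick establishing monicity of the comparison $\phi$ (which is where presentability of $G$ and closure of $\mathcal A$ under coequalizers interact), the observation that every $g \colon G \to X$ factors through $\phi$ via $c_{(G,g)}$ so that strongness forces $\phi$ to be an isomorphism, and the retract/idempotent-splitting argument for condition~3. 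The steps you flag as routine genuinely are: closure of $\alpha$-presentables under $\alpha$-small colimits (using regularity of $\alpha$), essential smallness of $\mathcal A$ by transfinite iteration bounded by $\alpha$, smallness of $\mathcal A/X$ so the colimit exists by cocompleteness, and replacement of the $\alpha$-filtered comma category by a cofinal $\alpha$-directed poset to match Definition~\ref{def:locally-presentable} (and the paper's directed-colimit formulation of presentability). You also correctly read the statement's ``$\CC$ is $\alpha$-presentable'' as ``locally $\alpha$-presentable''.
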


We also consider special kinds of monomorphisms and epimorphisms that we
recall next.

\begin{definition}
  A \emph{regular monomorphism} in a category $\CC$ is a morphism $m \colon E
  \to X$ which is the equaliser of a pair of morphisms $f,g \colon X \to Y.$
\end{definition}

\begin{definition}
  A \emph{regular epimorphism} in a category $\CC$ is a morphism $e \colon Y
  \to Q$ which is the coequaliser of a pair of morphisms $f,g \colon X \to Y.$
\end{definition}

It follows easily from the definition that a regular monomorphism (epimorphism)
is indeed a monomorphism (epimorphism).

\begin{definition}
  A \emph{strong monomorphism} in a category $\CC$ is a monomorphism $m \colon C \to D$, such that for any epimorphism $e \colon A \to B$
  and morphisms $f \colon A \to C$ and $g \colon B \to D$ that make the following diagram
  \[ \stikz{strong.tikz} \]
  commute, there exists a (necessarily unique) morphism $d \colon B \to C$ such that the diagram
  \[ \stikz{strong2.tikz} \]
  commutes.
\end{definition}

The dual notion is recalled next.

\begin{definition}
  A \emph{strong epimorphism} in a category $\CC$ is an epimorphism $e \colon A \to B$, such that for any monomorphism $m \colon C \to D$
  and morphisms $f \colon A \to C$ and $g \colon B \to D$ that make the following diagram
  \[ \stikz{strong.tikz} \]
  commute, there exists a (necessarily unique) morphism $d \colon B \to C$ such that the diagram
  \[ \stikz{strong2.tikz} \]
  commutes.
\end{definition}

It is well-known that every regular monomorphism is also a strong monomorphism.
Dually, every regular epimorphism is also a strong epimorphism.

\begin{definition}
  A monomorphism $m \colon A \to B$ in category $\CC$ is called \emph{extremal}
  if it enjoys the following property: for every factorisation $m = g \circ e$,
  where $e$ is an epimorphism, it follows that $e$ is an isomorphism.
\end{definition}

\begin{definition}
  An epimorphism $e \colon A \to B$ in a category $\CC$ is called
  \emph{extremal} if it enjoys the following property: for every factorisation
  $e = m \circ f$, where $m$ is a monomorphism, it follows that $m$ is an
  isomorphism.
\end{definition}

Every strong monomorphism is also extremal and the same is true for the epimorphic counterparts. Therefore, for monomorphisms/epimorphisms in a category $\CC$, we have
\[
  \text{regular} \quad \Longrightarrow \quad \text{strong} \quad \Longrightarrow \quad \text{extremal}
\]
and if $\CC$ is locally presentable, then these classes of monomorphisms/epimorphisms coincide:
\[
  \text{regular} \quad \iff \quad \text{strong} \quad \iff \quad \text{extremal}
\]

\begin{example}
  In $\Ban$, which is locally presentable, the regular/strong/extremal
  monomorphisms coincide with the linear isometries and the
  regular/strong/extremal epimorphisms coincide with the linear quotient maps,
  i.e. linear contractions $q \colon X \to Y$ which map the open ball of $X$
  \emph{onto} the open ball of $Y$. We show in the sequel that the situation is
  completely analogous in $\OS$: regular/strong/extremal monomorphisms coincide
  with the linear complete isometries and the regular/strong/extremal
  epimorphisms coincide with the linear complete quotient maps.
\end{example}

\section{Local Presentability of Banach Spaces}
\label{sec:banach}

It is already known that $\Ban$ is locally countably presentable
\cite{lp-categories-book,borceux2}. We know of two different proofs of this
fact that we wish to discuss in relation to our proof of local presentability
of $\OS$. The proof presented in \cite[Example 1.48]{lp-categories-book} shows
that $\Ban$ is equivalent to a reflective subcategory of $\mathbf{Tot}$,
the category of totally convex spaces. The latter category is locally countably
presentable and the proof follows (using categorical arguments) from this and
the closure under countably-directed colimits of the subcategory. We do not
think that $\OS$ is equivalent to a reflective subcategory of
$\mathbf{Tot}$, so we do not think that this proof strategy would work.  In
fact, $\Ban$ is equivalent to a reflective subcategory of $\OS$ (see
Section \ref{sec:os}). Perhaps one can construct a different locally
presentable category that serves the role of $\mathbf{Tot}$ for operator
spaces, but we do not pursue this approach here. Instead, we opt for a more
direct proof that aligns with the approach in \cite{borceux2}.

The proof of local presentability of $\Ban$ in \cite[5.2.2e]{borceux2} gives an
explicit construction for countably-directed colimits and shows that the strong
generator $\mathbb R$ (Banach spaces are considered over the reals there) is
countably-presentable by showing that $\Ban(\mathbb R, -)$ preserves the
aforementioned colimits. Our proof strategy for showing local presentability of
$\OS$ is similar. The only significant difference is in the strong generators,
so we have to establish the countable-presentability of a greater range of
objects. The choice of field ($\mathbb R$ vs $\mathbb C$) is not important for
the proof in $\Ban$.

The purpose of this section is to prove some additional properties of
countably-directed colimits in $\Ban$ that are not available in the literature
(to the best of our knowledge) and that we need for our subsequent development
for the locally presentable structure of $\OS.$ Most notably, we use
Proposition \ref{prop:separable Banach spaces are presentable} in the next
section in order to characterise the countably-presentable objects in $\OS.$ In
preparation for this, we also characterise the countably-presentable objects of
$\Ban$ and we show that they coincide with the separable Banach spaces (Theorem
\ref{thm:separable-banach}). We have not been able to find a proof of this
(unsurprising) result in the literature.

\begin{remark}
  Before we begin with our technical development, we note that we adopt the
  convention that the least natural number is $1$, as it is common in
  functional analysis. In order to avoid cumbersome repetition, all maps
  between vector spaces, Banach spaces, and operator spaces are implicitly assumed
  to be linear, i.e. we may simply write ``contraction'' instead of ``linear
  contraction'', ``isometry'' instead of ``linear isometry'', etc.
\end{remark}

Let us start by explaining why $\Ban$ is not locally \emph{finitely}
presentable.  The main reason is in the way that directed colimits (that are
not necessarily countably-directed) are constructed. Suppose that $D \colon
\Lambda \to \Ban$ is a directed diagram where each morphism $D_{\lambda,\tau}
\colon D_\lambda \to D_\tau$ is given by (isometric) subspace inclusion.
Consider the space
\[
  C \eqdef \bigcup_{\lambda \in \Lambda} D_\lambda
\]
together with the norm $\norm{c} \eqdef \norm{c}_\lambda,$ where
$c \in D_\lambda$ and $\norm{\cdot}_\lambda$ is the norm of $D_\lambda.$
This definition is independent of the choice of $\lambda$ and after defining
the vector space structure of $C$ in the obvious way, we can see that $C$ is a
normed space.  Then, the colimit of $D$ is the \emph{completion} $\overline C$
of the normed space $C$, together with the obvious subspace inclusions
$D_\lambda \subset \overline C.$ The $D_\lambda$'s can be chosen such that
$C \subsetneq \overline C$ and it is now easy to prove that the only
finitely-presentable object is the zero-dimensional Banach space. Indeed, if
$X$ has dimension greater than zero, then we can construct a linear contraction
$f \colon X \to \overline C$ such that its image $f[X]$ contains some of the
newly added limit points in $\overline C$, e.g. by using the Hahn-Banach theorem.
This means that $f$ does not factorise through any of the inclusions $D_\lambda
\subset \overline C$  and therefore any non-zero Banach space $X$ cannot be
finitely-presentable (see Proposition \ref{prop:presentable-object}).

The reason that countably-directed colimits in $\Ban$ and $\OS$ behave better
categorically compared to merely directed ones, is because the \emph{shape} of
the former (not their cardinality) ensures that there is no need to take a
completion of the space as we did above. Next, we recall the construction of
countably-directed colimits in $\Ban$ from \cite[5.2.2e]{borceux2}. Note that
the morphisms $D_{\lambda,\kappa}$ are linear contractions and not necessarily
isometries as in the argument we presented above.

\begin{construction}[Countably-directed colimits in $\mathbf{Ban}$]
  \label{constr:aleph1-directed-colimit-Ban}
  Let $D \colon \Lambda \to \mathbf{Ban}$ be a countably-directed diagram in $\mathbf{Ban}$. We write $D_\lambda \eqdef D(\lambda)$ for the objects of the diagram in $\mathbf{Ban}$ and we write
  $D_{\lambda, \kappa} \eqdef D(\lambda \leq \kappa) \colon D_\lambda \to D_\kappa$ for its morphisms in $\mathbf{Ban}.$ We begin by constructing its colimit as a set: first, we
  construct the disjoint union $S \eqdef \coprod_\lambda D_\lambda$ of the underlying sets, with canonical injections $s_\lambda \colon D_\lambda \to S;$ then we take the quotient
  $C \eqdef S / \sim$ with respect to the equivalence relation
  \[ s_\lambda(x) \sim s_\kappa(y) \text{ iff } \exists \tau \in \Lambda, \text{ such that } \lambda \leq \tau \geq \kappa \text{ and } D_{\lambda, \tau}(x) = D_{\kappa, \tau}(y) . \]
  Then, the colimiting cocone of $D$ as a diagram in $\Set$ is given by $(C, \{ c_\lambda \colon D_\lambda \to C \}_{\lambda \in \Lambda} ),$ where $c_\lambda(x) \eqdef [s_\lambda(x)]$ maps elements into the appropriate equivalence class.
  Taking the vector space structure into account, the zero vector of $C$ is given by $0 \eqdef c_\lambda(0)$, where $\lambda \in \Lambda$ may be chosen arbitrarily, scalar multiplication
  is defined by $a[s_\lambda(v)] \eqdef [s_\lambda(av)]$, and addition is defined by
  \[ [s_\lambda(v)] + [s_\kappa(w)] \eqdef [s_\tau(D_{\lambda,\tau}(v) + D_{\kappa,\tau}(w))], \text{ where } \lambda \leq \tau \geq \kappa, \]
  and these assignments are indeed well-defined. Equipped with this structure,
  the set $C$ becomes a vector space and it then follows that $c_\lambda \colon
  D_\lambda \to C$ are linear maps and $(C, \{ c_\lambda \}_{\lambda \in \Lambda} )$ is the colimit of $D$
  in the category $\Vect.$ So far, we have only used the fact that $D$ is
  directed and the construction so far gives the aforementioned colimits in
  this case as well. The stronger countable-directedness property of $D$ only
  becomes relevant for the Banach space (and norm) structure of $C$, which we
  describe next.  Taking the Banach space structure into account, we define a
  norm on $C$ by
  \begin{align*}
    \norm{ v } 
    &\eqdef \inf\ \{ \norm{w} \ |\ \kappa \in \Lambda, w \in D_\kappa,  c_\kappa(w) = v \} \\
    &= \inf\ \{ \norm{w} \ |\ \kappa \in \Lambda, w \in D_\kappa,  [s_\kappa(w)] = v \} ,
  \end{align*}
  i.e. by taking the infimum of all norms of vectors in the same equivalence
  class. It is obvious from the above assignment that each $c_\kappa$ is a
  contraction\footnote{There is a slight abuse in terminology here, because one
  would first have to prove that $\norm{\cdot}$ is a norm on $C$, but the
  meaning should be clear regardless.}.
\end{construction}

\begin{lemma}\label{lem:increasing}
Let $C$ be obtained via Construction \ref{constr:aleph1-directed-colimit-Ban}, and let $v\in C$. Then:
\begin{itemize}
    \item[(a)] for each $\epsilon>0$, there is some $\kappa\in\Lambda$ and some $w\in D_\kappa$ such that $c_\kappa(w)=v$ and $\|v\|\leq\|w\|<\|v\|+\epsilon$;
    \item[(b)] for each $\lambda\geq\kappa$ in $\Lambda$, the element $w'=D_{\kappa,\lambda}(w)$ in $D_\lambda$ satisfies  $c_\lambda(w')=v$ and $\|v\|\leq \|w'\|\leq \|w\|$.
\end{itemize}
\end{lemma}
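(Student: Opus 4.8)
The plan is to read off both statements almost directly from the definition of the norm on $C$ together with the cocone and contractivity properties of the diagram, so the proof will be short. The one preliminary point worth checking is that, for any $v \in C$, the set $\{\norm{w} \mid \kappa \in \Lambda,\ w \in D_\kappa,\ c_\kappa(w) = v\}$ over which the infimum defining $\norm{v}$ is taken is nonempty. Since $C$ arises as the set-level quotient $S/\!\sim$ of $S = \coprod_\lambda D_\lambda$, every $v \in C$ is of the form $v = c_\lambda(x) = [s_\lambda(x)]$ for some $\lambda \in \Lambda$ and $x \in D_\lambda$. Hence the infimum is a genuine infimum of a nonempty set of nonnegative reals, and in particular $\norm{v}$ is a lower bound for every such $\norm{w}$.

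For part (a), I would argue straight from the characterisation of the infimum. Fixing $\epsilon > 0$, since $\norm{v}$ is the infimum of the nonempty set above there exist $\kappa \in \Lambda$ and $w \in D_\kappa$ with $c_\kappa(w) = v$ and $\norm{w} < \norm{v} + \epsilon$; and since $\norm{v}$ is a lower bound for that set we also have $\norm{v} \leq \norm{w}$. Together these give exactly $\norm{v} \leq \norm{w} < \norm{v} + \epsilon$. No attainment of the infimum is claimed, so nothing further is needed here.

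For part (b), I would take any $\kappa \in \Lambda$ and $w \in D_\kappa$ with $c_\kappa(w) = v$ (for instance the pair produced in part (a)), fix $\lambda \geq \kappa$, and set $w' = D_{\kappa,\lambda}(w)$. The identity $c_\lambda(w') = v$ is the cocone condition $c_\lambda \circ D_{\kappa,\lambda} = c_\kappa$, which follows from the construction: unwinding $c_\lambda(D_{\kappa,\lambda}(w)) = [s_\lambda(D_{\kappa,\lambda}(w))]$ and $c_\kappa(w) = [s_\kappa(w)]$, the two classes agree by taking $\tau = \lambda$ in the defining equivalence relation and using functoriality $D_{\lambda,\lambda} = \id$. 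The bound $\norm{w'} \leq \norm{w}$ is immediate because $D_{\kappa,\lambda}$ is a contraction, morphisms of $\Ban$ being contractions. Finally, $\norm{v} \leq \norm{w'}$ holds because $c_\lambda(w') = v$ places $w'$ in the set whose infimum is $\norm{v}$, so $\norm{v}$ bounds $\norm{w'}$ from below. Combining yields $\norm{v} \leq \norm{w'} \leq \norm{w}$.

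I do not expect a substantial obstacle: the whole lemma is a direct unwinding of the colimit norm together with contractivity of the connecting maps. The only points requiring any care are the nonemptiness of the set defining the infimum and keeping straight which inequality (the lower-bound property versus the $\epsilon$-approximation property) comes from which feature of the infimum; both are routine.
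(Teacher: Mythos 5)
Your proof is correct and follows essentially the same route as the paper's: part (a) is read off from the infimum defining $\norm{v}$ (approximation plus lower-bound property), and part (b) combines the cocone identity $c_\lambda \circ D_{\kappa,\lambda} = c_\kappa$ with contractivity of $D_{\kappa,\lambda}$ and the lower-bound property again. The extra care you take (nonemptiness of the set defining the infimum, unwinding the cocone condition via the equivalence relation) is sound but not needed beyond what the paper states.
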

\begin{proof}
    By definition of $\|v\|$, there is some $\kappa\in\Lambda$ and $w\in D_\kappa$ such that $c_\kappa(w)=v$ and $\|w\|-\|v\|<\epsilon$. The
    definition of $\|v\|$ also implies that $\|v\|\leq \|w\|$ and therefore $\|v\|\leq\|w\|<\|v\|+\epsilon$. 
    
    For (b), let $\lambda\geq\kappa$, and let $w'=D_{\kappa,\lambda}(w)$. Since $D_{\kappa,\lambda}$ is a contraction, we have $\|w'\|\leq \|w\|$.
    Moreover, we have $c_\lambda(w') = (c_\lambda\circ D_{\kappa,\lambda})(w)=c_{\kappa}(w)=v$, from which $\|v\|\leq\|w'\|$ follows, again by definition of $\|v\|$.
\end{proof}

The following property of countably-directed colimits in $\Ban$ has been
claimed in \cite[pp. 70]{lp-categories-book} without proof. We provide a proof
for completeness.

\begin{lemma}\label{lem:norm v is norm w}
  Let $C$ be obtained via Construction \ref{constr:aleph1-directed-colimit-Ban}.
  For each $v\in C$ there is some $\lambda\in\Lambda$ and some $w\in D_\lambda$
  such that $c_\lambda(w)=v$ and $\|w\|=\|v\|$.    
\end{lemma}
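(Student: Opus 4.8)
The plan is to realise the infimum defining $\|v\|$ by an actual element, rather than merely approximating it. A first attempt would be the following: for each $n$ use Lemma~\ref{lem:increasing}(a) to obtain $\kappa_n \in \Lambda$ and $w_n \in D_{\kappa_n}$ with $c_{\kappa_n}(w_n)=v$ and $\|w_n\|<\|v\|+\tfrac1n$, then invoke countable-directedness of $\Lambda$ to pick $\lambda \geq \kappa_n$ for all $n$ and push everything forward into the single space $D_\lambda$. This yields elements $D_{\kappa_n,\lambda}(w_n)$ all mapping to $v$ under $c_\lambda$ with norms tending to $\|v\|$, but it does \emph{not} finish the proof: these elements need not form a Cauchy sequence (their pairwise differences lie in $\ker c_\lambda$, which may contain vectors of large norm), and the quotient-norm infimum over a single $D_\lambda$ is in general not attained. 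So the naive approach fails, and this is precisely the obstacle to overcome.

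To fix this I would instead build a \emph{coherent} approximating chain: indices $\lambda_1 \leq \lambda_2 \leq \cdots$ and elements $w_n \in D_{\lambda_n}$ satisfying $c_{\lambda_n}(w_n)=v$, $\|w_n\|<\|v\|+2^{-n}$, and crucially $w_{n+1}=D_{\lambda_n,\lambda_{n+1}}(w_n)$. The base case is just Lemma~\ref{lem:increasing}(a). For the inductive step, given $w_n$, apply Lemma~\ref{lem:increasing}(a) again to get $\mu \in \Lambda$ and $u \in D_\mu$ with $c_\mu(u)=v$ and $\|u\|<\|v\|+2^{-(n+1)}$. Since $c_{\lambda_n}(w_n)=v=c_\mu(u)$, the description of the equivalence relation in Construction~\ref{constr:aleph1-directed-colimit-Ban} provides some $\tau \geq \lambda_n,\mu$ with $D_{\lambda_n,\tau}(w_n)=D_{\mu,\tau}(u)$; setting $\lambda_{n+1}\eqdef\tau$ and $w_{n+1}\eqdef D_{\mu,\tau}(u)$ then gives $w_{n+1}=D_{\lambda_n,\lambda_{n+1}}(w_n)$, while contractivity of $D_{\mu,\tau}$ yields $\|w_{n+1}\|\leq\|u\|<\|v\|+2^{-(n+1)}$ and $c_{\lambda_{n+1}}(w_{n+1})=c_\mu(u)=v$.

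Finally, since $\{\lambda_n\}$ is countable, countable-directedness of $\Lambda$ supplies $\lambda \in \Lambda$ with $\lambda \geq \lambda_n$ for all $n$. By functoriality of $D$ together with the coherence relation $w_{n+1}=D_{\lambda_n,\lambda_{n+1}}(w_n)$, the pushed-forward elements all coincide, namely $D_{\lambda_{n+1},\lambda}(w_{n+1})=D_{\lambda_n,\lambda}(w_n)$, so there is a single $w\in D_\lambda$ with $w=D_{\lambda_n,\lambda}(w_n)$ for every $n$. Then $c_\lambda(w)=c_{\lambda_n}(w_n)=v$, and $\|w\|=\|D_{\lambda_n,\lambda}(w_n)\|\leq\|w_n\|<\|v\|+2^{-n}$ for all $n$, whence $\|w\|\leq\|v\|$; combined with $\|w\|\geq\|v\|$ coming directly from the definition of $\|v\|$, we conclude $\|w\|=\|v\|$, as required. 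The step I expect to be the crux is the coherence trick in the second paragraph: it is what converts the approximating family into a chain that collapses to one element upstairs, thereby bypassing the non-attainment of the quotient-norm infimum flagged above. It is worth noting that this argument uses only the combinatorics of the colimit and needs no completeness argument.
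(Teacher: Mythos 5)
Your proof is correct and is essentially the paper's own argument: the paper likewise takes approximants $w_n$ from Lemma~\ref{lem:increasing}(a), uses the equivalence relation to find stages at which they become identified, and invokes countable-directedness once to obtain a single $\lambda$ above all of those stages, so that the pushforwards collapse to one element $w$ with $\|v\|\leq\|w\|\leq\|w_n\|<\|v\|+\tfrac{1}{n}$ for every $n$. The only difference is organizational: the paper merges the $w_n$ pairwise via indices $\kappa_{(n,m)}$ (and takes $\lambda$ above that countable doubly-indexed family), whereas you merge them sequentially into a coherent chain; the two devices are interchangeable here and both correctly avoid any completeness or Cauchy-sequence argument.
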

\begin{proof}
  By Lemma \ref{lem:increasing}(a), for each $n\in\mathbb N$ there is some $\kappa_n\in\Lambda$ and some $w_n\in D_{\kappa_n}$ such that $c_{\kappa_n}(w_n)=v$ and $\|v\|\leq\|w_n\|<\|v\|+\frac{1}{n}$.
  Let $n,m\in\mathbb N$. Then $[s_{\kappa_n}(w_n)]=c_{\kappa_n}(w_n)=v=c_{\kappa_m}(w_m)=[s_{\kappa_m}(w_m)]$, so $s_{\kappa_n}(w_n)\sim s_{\kappa_m}(w_m)$,
  which implies that there is some $\kappa_{(n,m)} \in \Lambda$ such that $ \kappa_n \leq \kappa_{(n,m)} \geq \kappa_m$ and such that $D_{\kappa_n,\kappa_{(n,m)}}(w_n)=D_{\kappa_m,\kappa_{(n,m)}}(w_m)$.
  By countable-directedness of the diagram, there is now some $\lambda\geq\kappa_{(n,m)}$ for each $n,m\in\mathbb N$.
  Then for each $n,m\in\mathbb N$, we have $D_{\kappa_n,\lambda}(w_n)=D_{\kappa_m,\lambda}(w_m)$, because
  \begin{align*}
    D_{\kappa_n,\lambda}(w_n) &=
    \left( D_{\kappa_{(n,m)},\lambda} \circ D_{\kappa_n,\kappa_{(n,m)}} \right)(w_n) \\
    &= \left(D_{\kappa_{(n,m)},\lambda} \circ D_{\kappa_m,\kappa_{(n,m)}} \right)(w_m) \\
    &= D_{\kappa_m,\lambda}(w_m).
  \end{align*}
  So, let $w\in D_\lambda$ be the element such that $w=D_{\kappa_n,\lambda}(w_n)$ for each $n\in\mathbb N$.
  Then it follows from Lemma \ref{lem:increasing}(b) that $c_\lambda(w)=v$ and $\|v\|\leq\|w\|\leq\|w_n\|<\|v\|+\frac{1}{n}$ for each $n\in\mathbb N$. Hence, we must have $\|v\|=\|w\|$. 
\end{proof}

Next, we present an important property of countably-directed colimits in
$\Ban$. We have not seen this claimed in the literature, but this proposition
is essential for proving that separable Banach spaces are
countably-presentable.

\begin{proposition}\label{prop:distances in C and D_lambda}
  Let $\{v_n \ |\ n\in\mathbb N\}$ be a countable subset of $C$ from Construction \ref{constr:aleph1-directed-colimit-Ban}. Then there is some $\lambda\in\Lambda$ and a countable subset
  $\{w_n \ |\ n\in\mathbb N\} \subseteq D_\lambda$ such that $c_\lambda(w_n)=v_n$ and $\|v_n-v_m\|=\|w_n-w_m\|$ for each $n,m\in\mathbb N$.  
\end{proposition}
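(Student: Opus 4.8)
The plan is to build, in two rounds of countable-directedness, a single index $\lambda \in \Lambda$ at which all the required data live. The statement strengthens Lemma~\ref{lem:norm v is norm w} in two ways: I must realise \emph{all} the $v_n$ simultaneously as images of elements of one fixed $D_\lambda$, and I must match not the norms $\norm{v_n}$ but the norms of the countably many differences $\norm{v_n - v_m}$. The recurring device throughout is the squeeze already used in Lemma~\ref{lem:norm v is norm w}: if $u \in D_\kappa$ satisfies $c_\kappa(u) = v$, then pushing $u$ forward along any $D_{\kappa,\tau}$ can only decrease its norm (the maps are contractions), while the infimum definition of $\norm{\cdot}$ on $C$ keeps it $\geq \norm{v}$; so once a preimage attains the value $\norm{v}$, every further push-forward still attains it.

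For the first round I would apply Lemma~\ref{lem:norm v is norm w} to each $v_n$ and, separately, to each difference $v_n - v_m \in C$, producing indices together with norm-attaining preimages. Since there are only countably many such indices, countable-directedness yields a single $\nu \in \Lambda$ above all of them. Pushing everything into $D_\nu$ and invoking the squeeze above, I would obtain $w_n \in D_\nu$ with $c_\nu(w_n) = v_n$, and for each pair an element $d_{n,m} \in D_\nu$ with $c_\nu(d_{n,m}) = v_n - v_m$ and $\norm{d_{n,m}} = \norm{v_n - v_m}$.

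For the second round, the key observation is that $w_n - w_m$ and $d_{n,m}$ are \emph{two} preimages of the same element $v_n - v_m$ under $c_\nu$, so by the definition of the equivalence relation in Construction~\ref{constr:aleph1-directed-colimit-Ban} there is some $\tau_{n,m} \geq \nu$ with $D_{\nu,\tau_{n,m}}(w_n - w_m) = D_{\nu,\tau_{n,m}}(d_{n,m})$. As there are only countably many pairs, countable-directedness again supplies a single $\lambda \in \Lambda$ above $\nu$ and all the $\tau_{n,m}$. Setting $w_n' \defeq D_{\nu,\lambda}(w_n)$, I get $c_\lambda(w_n') = v_n$, and for each pair $w_n' - w_m' = D_{\nu,\lambda}(w_n - w_m) = D_{\nu,\lambda}(d_{n,m})$, since both sides factor through the equalising stage $\tau_{n,m}$. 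The right-hand side is a push-forward of the norm-attaining $d_{n,m}$ whose image still satisfies $c_\lambda(D_{\nu,\lambda}(d_{n,m})) = v_n - v_m$, so the squeeze gives $\norm{w_n' - w_m'} = \norm{v_n - v_m}$, as required.

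The hard part will be the mismatch exposed in the second round: a norm-attaining preimage of a difference is not a priori the difference of any preimages of $v_n$ and $v_m$, and there is no single preimage map through which to compare them. Reconciling the two families forces a passage to a later stage where they literally coincide, and carrying this out \emph{simultaneously} for all countably many pairs is precisely what requires the second, essential use of countable- rather than merely finite-directedness. Everything else is the routine bookkeeping of pushing forward and re-applying the contraction/infimum squeeze.
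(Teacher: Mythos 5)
Your proof is correct and follows essentially the same route as the paper's: norm-attaining preimages of all the differences $v_n - v_m$ via Lemma \ref{lem:norm v is norm w}, a first use of countable-directedness to collect them at a single stage, equalisation of the difference-of-preimages $w_n - w_m$ against the norm-attaining preimage $d_{n,m}$ at later stages (exactly the paper's $\lambda_{(n,m)}$ step), then a second use of countable-directedness and the contraction/infimum squeeze. The only cosmetic difference is that the paper sets $v_0 \eqdef 0$ and obtains the preimages of the $v_n$ as the norm-attaining preimages $y_{n,0}$ of $v_n - v_0$, rather than invoking the lemma separately for the $v_n$ themselves as you do.
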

\begin{proof}
  Let $v_0\eqdef 0$. By Lemma \ref{lem:norm v is norm w}, for each $n,m\in\mathbb Z_{\geq 0}$, we can find some $\kappa_{(n,m)}\in\Lambda$ and some $x_{n,m}\in D_{\kappa_{(n,m)}}$ such that
  $c_{\kappa_{(n,m)}}(x_{n,m})=v_n-v_m$ and $\|x_{n,m}\|=\|v_n-v_m\|$.
  By countable-directedness, we can find $\kappa\geq\kappa_{(n,m)}$ for each $n,m\in\mathbb Z_{\geq 0}$ and
  we define $y_{n,m} \eqdef D_{\kappa_{(n,m)},\kappa}(x_{n,m}) \in D_\kappa$.
  Since $D_{\kappa_{(n,m)},\kappa}$ is a contraction, we have $\|y_{n,m}\|\leq\|x_{n,m}\|=\|v_n-v_m\|$ for each $n,m\in\mathbb N$.
  Note that
  \[ c_{\kappa}(y_{n,m}) = \left( c_{\kappa} \circ D_{\kappa_{(n,m)},\kappa} \right) (x_{n,m}) = c_{\kappa_{(n,m)}}(x_{n,m}) = v_n-v_m . \]
  Hence, we have
  \[ c_{\kappa}(y_{n,0}-y_{m,0})=c_\kappa(y_{n,0})-c_{\kappa}(y_{m,0})=v_n-v_m=c_{\kappa}(y_{n,m}) \] 
  and therefore $s_{\kappa}( y_{n,0}-y_{m,0} ) \sim s_\kappa(y_{n,m})$ in $\coprod_{\lambda\in\Lambda}D_\lambda$.
  This means that for each $n,m\in\mathbb Z_{\geq 0}$ there is some $\lambda_{(n,m)}\geq \kappa$ such that $D_{\kappa,\lambda_{(n,m)}}(y_{n,m})=D_{\kappa,\lambda_{(n,m)}}(y_{n,0}-y_{m,0})$.
  Again, by countable-directedness, there is some $\lambda\geq\lambda_{(n,m)}$ for each $n,m\in\mathbb Z_{\geq 0}$.
  Now, for each $n\in\mathbb N$, let $w_n \eqdef D_{\kappa,\lambda}(y_{n,0}) \in D_\lambda$.
  Then $c_{\lambda}(w_n) = \left( c_{\lambda}\circ D_{\kappa,\lambda} \right) (y_{n,0})=c_{\kappa}(y_{n,0})=v_n-v_0=v_n$, whereas for each $n,m\in\mathbb Z_{\geq 0}$, we have
  \begin{align*}
    \|w_n-w_m\| &= \|D_{\kappa,\lambda}(y_{n,0}-y_{m,0})\| \\
                &= \| \left( D_{\lambda_{(n,m)},\lambda}\circ D_{\kappa,\lambda_{(n,m)}} \right) (y_{n,0}-y_{m,0})\| \\
                & \leq\|D_{\kappa,\lambda_{(n,m)}}(y_{n,0}-y_{m,0})\| \\
                &=\|D_{\kappa,\lambda_{(n,m)}}(y_{n,m})\|\leq\|y_{n,m}\|\leq\|v_n-v_m\|,
  \end{align*}
  where we used that the $D_{\cdot,\cdot}$ maps are contractions. Since $c_\lambda(w_n-w_m)=c_\lambda(w_n)-c_\lambda(w_m)=v_n-v_m$, it follows by definition of $\norm{-}$ on $C$ that also
  $\|v_n-v_m\|\leq\|w_n-w_m\|$, hence we obtain $\|v_n-v_m\|=\|w_n-w_m\|$ for each $n,m\in\mathbb N$.
\end{proof}

To illustrate the usefulness of the above proposition, we show how we can
easily prove the next corollary (that is already known) by using it.

\begin{corollary}
  \label{cor:colimits-ban}
  Let $D \colon \Lambda \to \Ban$ be a countably-directed diagram in $\Ban$. Then the colimit of $D$ is given by Construction \ref{constr:aleph1-directed-colimit-Ban}.
\end{corollary}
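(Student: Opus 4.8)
The plan is to verify that the cocone $(C, \{c_\lambda\}_{\lambda \in \Lambda})$ produced by Construction \ref{constr:aleph1-directed-colimit-Ban} is a colimiting cocone in $\Ban$. Since the construction already establishes that this cocone is the colimit of $D$ in $\Vect$ (in particular that $C$ carries a well-defined vector space structure with each $c_\lambda$ linear), three things remain to be shown: (i) that the function $\norm{\cdot}$ defined on $C$ is a genuine norm making $C$ a \emph{Banach} space; (ii) that each $c_\lambda$ is a contraction, so that the cocone indeed lives in $\Ban$; and (iii) that $(C, \{c_\lambda\})$ satisfies the universal property among all cocones valued in $\Ban$.

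I would dispatch (ii) immediately, as it is read off from the infimum definition of $\norm{\cdot}$, exactly as already noted in the construction. For (i), homogeneity is routine, while the triangle inequality and positive-definiteness follow cleanly from our earlier lemmas rather than by hand. Given $v_1, v_2 \in C$, I would apply Proposition \ref{prop:distances in C and D_lambda} to the finite set $\{v_1, v_2\}$ (taking $v_0 = 0$ as in its proof) to obtain a single index $\lambda$ and elements $w_1, w_2 \in D_\lambda$ with $c_\lambda(w_i) = v_i$, $\norm{w_i} = \norm{v_i}$, and $\norm{w_1 - w_2} = \norm{v_1 - v_2}$; since $c_\lambda(w_1 + w_2) = v_1 + v_2$, the triangle inequality in $C$ follows from the one in $D_\lambda$ via $\norm{v_1 + v_2} \le \norm{w_1 + w_2} \le \norm{w_1} + \norm{w_2} = \norm{v_1} + \norm{v_2}$. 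For positive-definiteness, if $\norm{v} = 0$ then Lemma \ref{lem:norm v is norm w} yields $w \in D_\lambda$ with $\norm{w} = \norm{v} = 0$, whence $w = 0$ and $v = c_\lambda(0) = 0$.

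The main work is completeness, and the key tool is again Proposition \ref{prop:distances in C and D_lambda}, which transfers an entire countable configuration of points of $C$ \emph{isometrically} into a single space $D_\lambda$. Concretely, given a Cauchy sequence $(v_n)_{n \in \mathbb N}$ in $C$, I would apply the proposition to $\{v_n : n \in \mathbb N\}$ to obtain one index $\lambda$ and elements $w_n \in D_\lambda$ with $c_\lambda(w_n) = v_n$ and $\norm{w_n - w_m} = \norm{v_n - v_m}$ for all $n, m$. Since distances are preserved, $(w_n)$ is Cauchy in the Banach space $D_\lambda$ and hence converges to some $w \in D_\lambda$; as $c_\lambda$ is a contraction it is continuous, so $v_n = c_\lambda(w_n) \to c_\lambda(w)$ in $C$, exhibiting a limit for $(v_n)$. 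This is the crux of the argument: without the isometric lifting one would be forced to pass to a completion (as in the merely directed case discussed before the construction), whereas countable-directedness guarantees that no new limit points are needed.

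Finally, for the universal property (iii), let $(B, \{b_\lambda \colon D_\lambda \to B\}_{\lambda \in \Lambda})$ be any cocone in $\Ban$. The universal property of $C$ in $\Vect$ provides a unique linear map $u \colon C \to B$ with $u \circ c_\lambda = b_\lambda$, and its uniqueness as a $\Ban$-morphism is inherited, so it only remains to check that $u$ is a contraction. For $v \in C$, Lemma \ref{lem:norm v is norm w} supplies $w \in D_\lambda$ with $c_\lambda(w) = v$ and $\norm{w} = \norm{v}$, so that $\norm{u(v)} = \norm{u(c_\lambda(w))} = \norm{b_\lambda(w)} \le \norm{w} = \norm{v}$, using that $b_\lambda$ is a contraction. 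Hence $u$ is a contraction and $(C, \{c_\lambda\})$ is the colimit of $D$ in $\Ban$. I expect completeness to be the only genuinely nontrivial step, with everything else reducing to bookkeeping via the lifting lemmas already proved.
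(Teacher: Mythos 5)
Your proposal is correct and takes essentially the same route as the paper's proof: positive-definiteness of the norm via Lemma \ref{lem:norm v is norm w}, completeness via the isometric lifting of a Cauchy sequence provided by Proposition \ref{prop:distances in C and D_lambda}, and the universal property by showing the unique mediating map from the $\Vect$ colimit is contractive, again using Lemma \ref{lem:norm v is norm w}. The only differences are cosmetic: you spell out the triangle inequality (which the paper treats as routine), and your claim that Proposition \ref{prop:distances in C and D_lambda} yields $\norm{w_i}=\norm{v_i}$ strictly requires including $0$ in the countable set and translating by the lift of $0$, as you implicitly acknowledge by citing its proof.
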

\begin{proof}
  One must show that $C$ is indeed a Banach space.
  We begin by proving that the assignment $\|\cdot\| \colon C \to [0,\infty)$
  from Construction \ref{constr:aleph1-directed-colimit-Ban} is indeed a
  norm. The only nontrivial step is showing that $\|v\|=0$ implies $v=0$. So
  assume that $v\in C$ with $\|v\|=0$. By Lemma \ref{lem:norm v is norm w},
  there is some $\lambda\in\Lambda$ and some $w\in D_\lambda$ such that
  $\|w\|=\|v\|$ and $c_\lambda(w)=v$. Since $D_\lambda$
  is a normed space and $\norm w = 0$, it follows that $w=0$ and therefore $v=c_\lambda(w)=0$. 
  
  Next, we show completeness of $C$.   Let $ (v_n)$ be a Cauchy sequence in $C$. By Proposition
  \ref{prop:distances in C and D_lambda}, we can find some $\lambda\in\Lambda$
  and a sequence $(w_n)$ in $D_\lambda$ such that  $\|w_n-w_m\|=\|v_n-v_m\|$ and $c_\lambda(w_n) = v_n$ for
  each $n,m\in\mathbb N$. Hence, $(w_n)$ must also be a Cauchy sequence in $D_\lambda$, which is complete, hence the limit
  $w \eqdef \lim_{n\to\infty}w_n$ in $D_\lambda$ exists. Let $v=c_\lambda(w)$. Then
  for each $\epsilon>0$, there is some $N\in\mathbb N$ such that for each $n \geq N$ we have
  $\|w-w_n\|<\epsilon$. Hence,
  \[ \|v-v_n\|=\|c_\lambda(w)-c_\lambda(w_n)\|=\|c_\lambda(w-w_n)\|\leq\|w-w_n\|<\epsilon, \]
  where we used that $c_\lambda$ is a contraction. This shows that
  $v=\lim_{n\to\infty}v_n$ and therefore $C$ is indeed complete.

  For the couniversal property of the colimit, we already know that the cocone $(C, \{c_\lambda\}_{\lambda \in \Lambda})$ is
  colimiting in $\Vect$. The linear maps $c_\lambda \colon D_\lambda
  \to C$ are contractive by construction, therefore $(C, \{c_\lambda\}_{\lambda \in \Lambda})$ is a cocone of $D$ in $\Ban.$ If $(Z, \{ z_\lambda \}_{\lambda \in \Lambda})$
  is another cocone for $D$, then the unique mediating map $u \colon C \to Z$ is defined in the same way
  as in $\Vect$, namely by $u([s_\lambda(x)]) \eqdef z_\lambda(x)$, and it is easy to see this is independent of the choice of $\lambda \in \Lambda$ and $x \in D_\lambda.$
  Since this is the unique mediating map (in $\Vect$), we just have to show that $u$ is a contraction.
  This follows easily using Lemma \ref{lem:norm v is norm w}, because we can choose $\kappa \in \Lambda$ and $y \in D_\kappa$, such that
  $ [s_\lambda(x)] = c_\lambda(x) = c_\kappa(y) = [s_\kappa(y)]$ and $\norm y = \norm{c_\lambda(x)}$,
  therefore
  \[ \norm{u(c_\lambda(x))} = \norm{u(c_\kappa(y))} = \norm{z_\kappa(y)} \leq \norm{y} = \norm{c_\lambda(x)} . \qedhere \]
\end{proof}

In order to show that any separable Banach space is countably-presentable, we need
to establish further properties of the colimit construction in preparation for this.
Before we formulate our next proposition, we define
$\mathbb Q[i]$ to be the Gaussian rationals $\mathbb Q+\mathbb Qi$. Given a
subset $A$ of a vector space $V$ over $\mathbb C$, we define the subset
$\mathrm{span}_{\mathbb Q[i]}(A)$ of $V$ as follows:
\[ 
  \mathrm{span}_{\mathbb Q[i]}(A)\eqdef \left\{\sum_{k=1}^n\alpha_k x_k:n\in\mathbb N,\alpha_1,\ldots,\alpha_n\in\mathbb Q[i],x_1,\ldots,x_n\in A\right\}.
\]

\begin{proposition}
  \label{prop:span-colimit}
  Let $A$ be a countable subset of $C$ from Construction \ref{constr:aleph1-directed-colimit-Ban} and let
  $B = \mathrm{span}_{\mathbb Q[i]}(A)$ be its closure under finite $\mathbb Q[i]$-linear combinations.
  Then there is some $\lambda\in\Lambda$ and a countable subset
  $V = \{v_b \ |\ b \in B\} \subseteq D_\lambda$ such that:
  \begin{itemize}
    \item $c_\lambda(v_b)=b,$ for each $b \in B$;
    \item $\|v_a-v_b\| = \| a-b \|,$ for each $a,b\in B$;
    \item $v_a+\mu v_b=v_{a+\mu b}$ for each $a,b\in B$ and each $\mu\in \mathbb Q[i]$, i.e. the subset $V$ is closed under finite $\mathbb Q[i]$-linear combinations.
  \end{itemize}
\end{proposition}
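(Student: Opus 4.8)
The plan is to bootstrap from Proposition \ref{prop:distances in C and D_lambda}, which already supplies a distance-preserving lift of any countable subset of $C$, and then to climb further up the diagram to additionally enforce $\mathbb Q[i]$-linearity. First I would observe that $B = \mathrm{span}_{\mathbb Q[i]}(A)$ is again countable: it is a countable union over the arity $n$ of images of $A^n \times \mathbb Q[i]^n$, each of which is countable since $A$ and $\mathbb Q[i]$ are. Applying Proposition \ref{prop:distances in C and D_lambda} to the countable set $B$ then yields some $\lambda_0 \in \Lambda$ and a family $\{u_b \ |\ b \in B\} \subseteq D_{\lambda_0}$ with $c_{\lambda_0}(u_b) = b$ and $\|u_a - u_b\| = \|a-b\|$ for all $a,b \in B$. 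This lift need not respect the linear structure, so the remaining work is to correct exactly this defect.

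The key point is that for each triple $(a,b,\mu) \in B \times B \times \mathbb Q[i]$, both $u_{a+\mu b}$ and $u_a + \mu u_b$ are elements of $D_{\lambda_0}$ that $c_{\lambda_0}$ sends to the \emph{same} element $a + \mu b \in C$ (using that $B$ is closed under $\mathbb Q[i]$-linear combinations and that $c_{\lambda_0}$ is linear). Hence, by the description of $\sim$ in Construction \ref{constr:aleph1-directed-colimit-Ban}, there is some $\tau_{(a,b,\mu)} \geq \lambda_0$ with $D_{\lambda_0,\tau_{(a,b,\mu)}}(u_{a+\mu b}) = D_{\lambda_0,\tau_{(a,b,\mu)}}(u_a + \mu u_b)$. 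Since $B \times B \times \mathbb Q[i]$ is countable, countable-directedness of $\Lambda$ provides a single $\lambda \geq \lambda_0$ dominating all of the $\tau_{(a,b,\mu)}$. I would then set $v_b \eqdef D_{\lambda_0,\lambda}(u_b)$.

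Verifying the three bullets is then routine. The first holds because $c_\lambda \circ D_{\lambda_0,\lambda} = c_{\lambda_0}$. For the third, factoring $D_{\lambda_0,\lambda}$ through $\tau_{(a,b,\mu)}$ and using linearity of $D_{\lambda_0,\lambda}$ gives $v_{a+\mu b} = v_a + \mu v_b$; in particular the instance $a=b=0,\ \mu=1$ forces $v_0 = 0$, and an easy induction then upgrades this single identity to closure under all finite $\mathbb Q[i]$-linear combinations. For the second, distance preservation survives the pushforward by a sandwiching argument: since the connecting maps are contractions we get $\|v_a - v_b\| = \|D_{\lambda_0,\lambda}(u_a - u_b)\| \leq \|u_a - u_b\| = \|a-b\|$, while $c_\lambda(v_a - v_b) = a-b$ forces the reverse inequality $\|a-b\| \leq \|v_a - v_b\|$ directly from the infimum definition of the norm on $C$, so the two combine to equality.

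The main obstacle is conceptual rather than computational: one must see that distance preservation and $\mathbb Q[i]$-linearity can be secured \emph{simultaneously} by one choice of $\lambda$. This works precisely because there are only countably many linearity constraints to satisfy and because pushing a distance-preserving lift forward along contractions can only shrink distances, with the matching lower bound supplied automatically by the colimit norm. Thus no separate effort is needed to re-prove isometry after the pushforward, and the two applications of countable-directedness (one hidden inside Proposition \ref{prop:distances in C and D_lambda}, one here) combine cleanly.
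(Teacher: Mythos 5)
Your proposal is correct and follows essentially the same route as the paper's proof: both apply Proposition \ref{prop:distances in C and D_lambda} to the countable set $B$, then use the countability of the linearity constraints (your triples $(a,b,\mu)$ versus the paper's quadruples $(a,b,d,\mu)$ with $d = a+\mu b$, which carry the same information) together with countable-directedness to push the lift up to a single $\lambda$, and verify isometry by the same contraction-plus-colimit-norm sandwich. The only cosmetic difference is your explicit remark on $v_0 = 0$ and the induction to finite combinations, which the paper leaves implicit.
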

\begin{proof}
  The set $B$ is countable, because the Gaussian rations $\mathbb Q[i]$ are countable, so by Proposition
  \ref{prop:distances in C and D_lambda}, there is some $\kappa\in\Lambda$ such
  that for each $b\in B$, there is some $w_b\in D_\kappa$ with
  $b=c_\kappa(w_b)$, and such that $\|w_a-w_b\| = \| a-b \|$ for each
  $a,b\in B$. Let
  \[ R=\{(a,b,d,\mu)\in B^3\times\mathbb Q[i]: a+\mu b=d\} . \]
  Clearly, $R$ is also countable. Then for each $a,b,d\in B$ and
  $\mu\in\mathbb Q[i]$ such that $a+\mu b=d$, we have $(a,b,d,\mu)=r$ for some
  $r\in R$ and we have: 
  \[c_\kappa( w_a+\mu w_b)=c_\kappa(w_a)+\mu c_\kappa(w_b)= a + \mu b = d = c_\kappa(w_d),\]
  hence $w_a+\mu w_b$ and $w_d$ are mapped to the same equivalence class in $C.$
  So there is some
  $\kappa_r\geq\kappa$ such that $D_{\kappa,\kappa_r}(w_a+\mu
  w_b)=D_{\kappa,\kappa_r}(w_d)$. By countability of $R$ and by
  countable-directedness of $D$, there is some $\lambda\geq\kappa_r$ for each
  $r\in R$. For each $b\in B$, let $v_b\eqdef D_{\kappa,\lambda}(w_b)$. Then we
  have for each $b \in B$ that
  \[ c_\lambda(v_b)= (c_\lambda\circ D_{\kappa,\lambda})(w_b)=c_{\kappa}(w_b) = b . \]
  Furthermore, for for each $a,b\in B$, we also have
  \[ \|v_a-v_b\|=\|D_{\kappa,\lambda}(w_a-w_b)\| \leq \|w_a-w_b\| = \|a-b\| = \norm{c_\lambda(v_a - v_b)} \leq \norm{v_a - v_b}, \]
  therefore $\norm{v_a - v_b} = \norm{a-b}.$
  Finally, for each $a, b \in B$ and each $\mu \in \mathbb Q[i]$, let $r = (a,b,d,\mu)\in R$, where $d = a +\mu b$. Then we have
  \begin{align*}
    v_a + \mu v_b &= D_{\kappa, \lambda}(w_a + \mu w_b) \\
    &= (D_{\kappa_r, \lambda} \circ D_{\kappa, \kappa_r})(w_a + \mu w_b) \\
    &= (D_{\kappa_r, \lambda} \circ D_{\kappa, \kappa_r})(w_d) \\
    &= D_{\kappa, \lambda}(w_d) \\
    &= v_d,
  \end{align*}
  as required.
\end{proof}

\begin{proposition}\label{prop:closed-separable}
  Let $X \subseteq C$ be a closed separable subspace of the space $C$ from Construction
  \ref{constr:aleph1-directed-colimit-Ban}. Then there is some
  $\lambda\in\Lambda$ and a closed separable subspace $X_\lambda \subseteq
  D_\lambda$ such that $c_\lambda \colon D_\lambda \to C$ (co)restricts to an
  isometric isomorphism $X_\lambda \cong X,$ i.e. such that the following diagram
  \[ \stikz{colimit-corestriction.tikz} \]
  commutes.
\end{proposition}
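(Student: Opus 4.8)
The plan is to reduce the statement to Proposition~\ref{prop:span-colimit} by feeding it a countable set whose $\mathbb Q[i]$-span is dense in $X$, and then to take the closure of the resulting witnesses inside $D_\lambda$ as the subspace $X_\lambda$. Since $X$ is separable, I would first fix a countable dense subset $A \subseteq X$ and set $B = \mathrm{span}_{\mathbb Q[i]}(A)$. Because $X$ is a complex subspace of $C$, it is closed under $\mathbb Q[i]$-linear combinations, so $B \subseteq X$; and since $A \subseteq B$ with $A$ dense in the closed set $X$, the set $B$ is again a countable dense subset of $X$. Applying Proposition~\ref{prop:span-colimit} to $A$ then produces some $\lambda \in \Lambda$ together with a countable family $V = \{ v_b \mid b \in B \} \subseteq D_\lambda$ such that $c_\lambda(v_b) = b$, such that $b \mapsto v_b$ is distance-preserving ($\norm{v_a - v_b} = \norm{a - b}$), and such that $V$ is closed under $\mathbb Q[i]$-linear combinations.

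Next I would define $X_\lambda \eqdef \overline V$, the closure of $V$ in $D_\lambda$, and verify its structural properties. It is separable by construction, since $V$ is a countable dense subset of it. The key structural point is that $X_\lambda$ is a \emph{complex} closed subspace: given $y, y' \in \overline V$ and $\mu \in \mathbb C$, I would approximate $y$ and $y'$ by elements of $V$ and $\mu$ by scalars in $\mathbb Q[i]$; the third clause of Proposition~\ref{prop:span-colimit} keeps these approximants inside $V$, and continuity of the vector space operations together with the density of $\mathbb Q[i]$ in $\mathbb C$ then places $y + \mu y'$ in $\overline V$.

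It remains to show that $c_\lambda$ corestricts to an isometric isomorphism $X_\lambda \cong X$. Since $c_\lambda(v_b) = b \in B \subseteq X$, and $c_\lambda$ is continuous with $X$ closed, we obtain $c_\lambda(X_\lambda) \subseteq X$, so the corestriction $c_\lambda|_{X_\lambda} \colon X_\lambda \to X$ is well defined. Noting that $v_0 = 0$ (a consequence of the linearity clause), the distance-preservation $\norm{v_a - v_b} = \norm{a - b}$ specialises, at $b = 0$, to $\norm{c_\lambda(v_a)} = \norm{v_a}$ on $V$; since $V$ is dense in $X_\lambda$ and $c_\lambda$ is linear and continuous, $c_\lambda|_{X_\lambda}$ is norm-preserving, hence an isometry. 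For surjectivity, the image $c_\lambda(X_\lambda)$ is the isometric image of the complete space $X_\lambda$, hence itself complete and therefore closed in $X$; as it contains the dense subset $B$, it must equal $X$, and the corestriction is an isometric isomorphism onto $X$.

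The substantive analytic work is already packaged inside Proposition~\ref{prop:span-colimit}, so the remaining argument is soft. I expect the only genuinely delicate point to be ensuring that the limit subspace $X_\lambda = \overline V$ is closed under multiplication by \emph{all} complex scalars rather than merely the Gaussian-rational ones; this is precisely why the earlier proposition was engineered to make $V$ closed under $\mathbb Q[i]$-linear combinations, and invoking that closure together with continuity of scalar multiplication resolves the difficulty. A secondary point worth stating with care is the surjectivity step, which hinges on the fact that the isometric image of a Banach space is complete and therefore closed.
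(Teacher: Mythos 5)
Your proposal is correct, and it follows the paper's skeleton exactly up to the definition of $X_\lambda$: both arguments fix a countable dense $A \subseteq X$, pass to $B = \mathrm{span}_{\mathbb Q[i]}(A)$, invoke Proposition~\ref{prop:span-colimit}, and set $X_\lambda \eqdef \overline{V}$. Where you genuinely diverge is the endgame. The paper constructs the inverse map $g \colon X \to X_\lambda$ by hand: for $x = \lim_n a_n$ with $a_n \in B$ it defines $g(x) \eqdef \lim_n v_{a_n}$, and then must check in turn that this limit is independent of the approximating sequence, that $g$ is linear (approximating complex scalars by Gaussian rationals), that $g$ is an isometry, and finally that $g$ and $h$ are mutually inverse via a density argument. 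You avoid building the inverse altogether: you note that $v_0 = 0$ makes the distance preservation specialise to $\norm{c_\lambda(v)} = \norm{v}$ on $V$, hence on $\overline{V}$ by continuity, so the corestriction $h$ is itself an isometry; surjectivity then follows from the soft fact that the isometric image of the complete space $X_\lambda$ is complete, hence closed in $X$, and contains the dense set $B$. Your route is shorter, packaging the $\epsilon$-work into two standard facts (complete subspaces of metric spaces are closed; a closed set containing a dense set is everything), whereas the paper does the same work by explicit computation with Cauchy sequences; both are sound. You were also right to single out, and verify, that $\overline{V}$ is a subspace over all of $\mathbb C$ and not merely over $\mathbb Q[i]$ --- a point the paper's proof disposes of in a single line.
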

\begin{proof}
  Let $A \subseteq X$ be a countable dense subset of $X$ and consider $B =
  \mathrm{span}_{\mathbb Q[i]}(A) $ which is also a countable dense subset of
  $X$. By Proposition \ref{prop:span-colimit}, we can find $\lambda \in
  \Lambda$ and a countable subset $V \subseteq D_\lambda$ that enjoys the
  properties listed there. Since $\mathrm{span}_{\mathbb Q[i]}(V) = V$, then its closure $X_\lambda \eqdef \overline{V}$
  is a closed separable subspace of $D_\lambda.$ Let $h \colon X_\lambda \to X$ be the (co)restriction of $c_\lambda \colon D_\lambda \to C$ to the indicated (co)domains.
  This is justified, because for every $v \in V$, we have $h(v) = c_\lambda(v) \in B$ by the first property in Proposition \ref{prop:span-colimit} and
  then by taking closures. We now show how to construct a linear contraction $g \colon X \to X_\lambda$ which is the inverse of $h.$

  First, if $x\in X$, then
  there must be some sequence $(a_n)_{n \in \mathbb N}$ in $B$ with limit $x$, because
  $B\subseteq X$ is dense. Since $(a_n)_{n\in\mathbb N}$ converges in $X$, it
  must be a Cauchy sequence. Using the second property of Proposition \ref{prop:span-colimit}, for each $n,m\in\mathbb N$, we have
  $\|v_{a_n}-v_{a_m}\| = \|a_n-a_m\|$, and therefore
  $(v_{a_n})_{n \in \mathbb N}$ is a Cauchy sequence in $D_\lambda$.
  By completeness
  of $D_\lambda$, this sequence must have a limit, so we
  define $g(x)\eqdef\lim_{n\to\infty}v_{a_n}$.
  We prove that the value of $g(x)$ is independent of the choice of sequence in
  $B$ with limit $x$. Let $(b_n)_{n\in\mathbb N}$ be another sequence in $B$
  with limit $x$. For each $n\in\mathbb N$, we have
  \begin{align*}
    \|g(x)-v_{b_n}\| &\leq \|g(x)-v_{a_n}\|+\|v_{a_n}-v_{b_n}\| \\
                     &= \|g(x)-v_{a_n}\|+\|a_n-b_n\| \\
                     &\leq \|g(x)-v_{a_n}\|+\|a_n-x\|+\|x-b_n\|
  \end{align*}
which approaches $0$ if $n\to\infty$, hence $g(x)=\lim_{n\to\infty}v_{a_n} = \lim_{n\to\infty}v_{b_n}$.

  In order to show that $g$ is linear, let $x,y\in
  X$ and $\mu\in\mathbb C$. Then there are sequences $(a_n),(b_n)$ in $B$ and
  $(\mu_n)$ in $\mathbb Q[i]$ converging to $x$, $y$ and $\mu$, respectively,
  because $B$ is dense in $X$ and the Gaussian rationals $\mathbb Q[i]$ are
  dense in $\mathbb C$. Then $d_n\eqdef a_n+\mu_n b_n$ defines a sequence in
  $B$ converging to $x+\mu y$ in $X$. By the third property of Proposition \ref{prop:span-colimit}, we have
  $v_{a_n}+\mu_n v_{b_n}=v_{d_n}$ and therefore
  \begin{align*}
    g(x+\mu y)
    &=\lim_{n\to\infty} v_{d_n} \\
    &= \lim_{n\to\infty}( v_{a_n}+\mu_n v_{b_n}) \\
    &=\left( \lim_{n\to\infty}v_{a_n} \right)+ \left(\lim_{n\to\infty}\mu_n \right) \left(\lim_{n\to\infty}v_{b_n} \right) \\
    &=g(x)+\mu g(y),
  \end{align*}
  where we used that vector space operations in a normed vector space are
  continuous with respect to the norm topology. Therefore $g$ is linear. 

  Next, we show that $g$ is an isometry. Let $x=\lim_{n\to\infty}a_n$ in $X$ for some sequence $(a_n)_{n\in\mathbb N}$ in $B$.
  Since the norm on any normed vector space is continuous, we have $\|x\|=\lim_{n\to\infty}\|a_n\|$.
  Therefore
  \[\|g(x)\|=\left\|\lim_{n\to\infty}v_{a_n}\right\|=\lim_{n\to\infty} \|v_{a_n}\| = \lim_{n\to\infty}\|a_n\|=\|x\| , \]
  where we also used the fact that $\norm{v_{a_n}} = \norm{a_n}$ which follows by the second property of Proposition
  \ref{prop:span-colimit} (simply take one of the vectors there to be $0$).
  This means that $g$ is an isometry.

  To complete the proof, observe that we have for every $b \in B$ that $h(g(b)) = h(v_b) = b$
  and for every $v_b \in V$, we have that $g(h(v_b)) = g(b) = v_b$. It follows that
  $h \circ g = \id_{X}$ and $g \circ h = \id_{X_\lambda},$ because the
  functions on both sides of the equalities are continuous and they coincide on
  dense subsets of spaces with Hausdorff topologies. Therefore both functions
  are isometric isomorphisms and $g = h^{-1}$. The commutativity of the diagram
  is obvious.
\end{proof}

We can now show that any separable Banach space is countably-presentable in
$\mathbf{Ban}$ by using Proposition \ref{prop:distances in C and D_lambda}
again. In fact, what we prove is a slightly stronger statement, because the
maps $g$ and $g'$ in the second part of the proposition are assumed to be
bounded and not necessarily contractive. We make use of this stronger property
in the sequel when working with operator spaces (see Theorem
\ref{thm:presentable-objects}).

\begin{proposition}\label{prop:separable Banach spaces are presentable}
  Let $X$ be a separable Banach space, and let $D:\Lambda\to\mathbf{Ban}$ be a countably-directed diagram with colimit $(C, \{ c_\lambda\}_{\lambda \in \Lambda})$ as given by
  Construction \ref{constr:aleph1-directed-colimit-Ban}. Let $f\colon X\to C$ be a linear contraction. Then
  \begin{itemize}
      \item[(1)] $f=c_\lambda\circ g$  for some $\lambda\in\Lambda$ and some linear contraction $g:X\to D_\lambda$;
      \item[(2)] for each $\lambda\in\Lambda$, if $g,g':X\to D_\lambda$ are bounded linear maps such that $c_\lambda\circ g=c_\lambda\circ g'$,
        then there exists some $\tau\geq\lambda$ such that $D_{\lambda,\tau}\circ g=D_{\lambda,\tau}\circ g'$.
  \end{itemize}
\end{proposition}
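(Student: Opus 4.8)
The plan is to verify the two conditions of Proposition \ref{prop:presentable-object} directly, leaning on Proposition \ref{prop:closed-separable} for the factorisation in part (1) and on the explicit colimit description in Construction \ref{constr:aleph1-directed-colimit-Ban} for the essential uniqueness in part (2).

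For part (1), I would first pass to the image. Since $f$ is continuous and $X$ is separable, the image $f[X]$ is a separable subset of $C$, so its closure $\overline{f[X]}$ is a closed separable subspace of $C$. Applying Proposition \ref{prop:closed-separable} to $\overline{f[X]}$ yields some $\lambda \in \Lambda$ and a closed separable subspace $X_\lambda \subseteq D_\lambda$ together with an isometric isomorphism $h \colon X_\lambda \to \overline{f[X]}$ that is the (co)restriction of $c_\lambda$. I would then define $g \colon X \to D_\lambda$ as the composite of the corestriction $\tilde f \colon X \to \overline{f[X]}$ with $h^{-1}$ and the inclusion $X_\lambda \hookrightarrow D_\lambda$. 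Because $\tilde f$ is a contraction and both $h^{-1}$ and the inclusion are isometric, $g$ is a contraction; and since $c_\lambda$ agrees with $h$ on $X_\lambda$, a short computation gives $c_\lambda \circ g = f$, as required.

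For part (2), fix $\lambda$ and bounded linear maps $g, g' \colon X \to D_\lambda$ with $c_\lambda \circ g = c_\lambda \circ g'$. Using separability, choose a countable dense subset $\{x_n \ |\ n \in \mathbb N\}$ of $X$. For each $n$, the element $z_n \eqdef g(x_n) - g'(x_n) \in D_\lambda$ satisfies $c_\lambda(z_n) = 0$, so $s_\lambda(z_n) \sim s_\lambda(0)$; by definition of the equivalence relation in Construction \ref{constr:aleph1-directed-colimit-Ban}, there is some $\tau_n \geq \lambda$ with $D_{\lambda, \tau_n}(z_n) = 0$. By countable-directedness of $\Lambda$, I would pick a single $\tau \in \Lambda$ with $\tau \geq \tau_n$ for all $n$; then $D_{\lambda, \tau}(z_n) = D_{\tau_n,\tau}(D_{\lambda,\tau_n}(z_n)) = 0$ for every $n$, so $D_{\lambda, \tau} \circ g$ and $D_{\lambda, \tau} \circ g'$ agree on the dense set $\{x_n\}$. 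Since both composites are bounded, hence continuous, they agree on all of $X$, giving $D_{\lambda, \tau} \circ g = D_{\lambda, \tau} \circ g'$. This is precisely where boundedness of $g, g'$ (rather than contractivity) suffices, matching the stronger statement flagged before the proposition.

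I expect part (1) to be essentially immediate once Proposition \ref{prop:closed-separable} is invoked, so the only content there is the bookkeeping of corestrictions and the identity $c_\lambda \circ g = f$. The heart of the argument is the use of countable-directedness in part (2): separability produces only \emph{countably} many constraints $z_n$, and $\aleph_1$-directedness is exactly what lets us collapse the countably many witnesses $\tau_n$ into a single $\tau$. This is the step that would fail for a merely directed diagram, and it is the reason that separable---rather than arbitrary---Banach spaces turn out to be countably-presentable.
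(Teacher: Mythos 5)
Your proposal is correct and follows essentially the same route as the paper's proof: part (1) applies Proposition \ref{prop:closed-separable} to $\overline{f[X]}$ and defines $g$ by inverting the (co)restricted $c_\lambda$, and part (2) uses a countable dense subset, per-point witnesses $\tau_n$ extracted from the colimit's equivalence relation, countable-directedness to merge them into one $\tau$, and continuity on a dense set to conclude. The only (cosmetic) difference is that you compare $z_n = g(x_n) - g'(x_n)$ with $0$ by linearity, where the paper compares $g(a)$ and $g'(a)$ directly.
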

\begin{proof}
  For (1), since $X$ is separable, then $Y \eqdef \overline{f[X]}$ is a closed separable subspace of $C.$
  Using Proposition \ref{prop:closed-separable}, we can find $\lambda \in \Lambda$ and a closed separable
  subspace $Y_\lambda \subseteq D_\lambda$, together with an isometric isomorphism $i \colon Y_\lambda \xrightarrow{\cong} Y$ that
  is given by (co)restricting $c_\lambda \colon D_\lambda \to C.$ We define $g \colon X \to D_\lambda$ by
  $g(x) \eqdef i^{-1}(f(x))$ which is clearly a linear contraction. It is now obvious that
  $c_\lambda \circ g = f$.

  For the second condition, let $A$ be a countable dense subset of $X.$
  Then for each $a\in A$, we have $(c_\lambda\circ g)(a)= (c_\lambda\circ g')(a)$,
  i.e. $g(a)$ and $g'(a)$ are mapped to the same equivalence class in $C$. This
  implies there exists some $\tau_a\geq\lambda$ such that
  $D_{\lambda,\tau_a}( g(a) ) = D_{\lambda,\tau_a} (g'(a))$.
  Since $A$ is countable, countable-directedness of $\Lambda$ implies the
  existence of some $\tau\geq \tau_a$ for each $a\in A$. It now follows that
  $(D_{\lambda,\tau}\circ g)(a)=(D_{\lambda,\tau}\circ g')(a)$ for each $a\in A$.
  We conclude that $D_{\lambda,\tau}\circ g=D_{\lambda,\tau}\circ g'$ because these
  functions are continuous with Hausdorff codomain and they coincide on a dense 
  subset.
\end{proof}

In order to show that any countably-presentable object in $\Ban$ is separable, the following proposition is important.

\begin{proposition}
  \label{prop:ban-self-colimit}
  Every Banach space is a countably-directed colimit (in $\Ban$) of its closed separable subspaces.
\end{proposition}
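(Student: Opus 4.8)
The plan is to let $\Lambda$ denote the poset of all closed separable subspaces of $X$, ordered by inclusion, and to take $D \colon \Lambda \to \Ban$ to be the diagram with $D_\lambda \eqdef \lambda$ and each $D_{\lambda,\kappa}$ the (isometric) subspace inclusion. I would then show that the evident cocone $(X, \{\iota_\lambda \colon D_\lambda \hookrightarrow X\}_{\lambda \in \Lambda})$, consisting of the inclusions into $X$, is colimiting in $\Ban$. Note that $\Lambda$ is a genuine (small) poset, since each of its elements is a subset of $X$, and it is nonempty, as it contains the zero subspace; the maps $\iota_\lambda$ are contractions and satisfy $\iota_\kappa \circ D_{\lambda,\kappa} = \iota_\lambda$, so the cocone is well-defined.

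The key step is to verify that $\Lambda$ is countably-directed. Given a countable family $\{Y_n\}_{n \in \mathbb N}$ of closed separable subspaces, I would take $Y$ to be the closed linear span of $\bigcup_n Y_n$, which is by construction a closed subspace of $X$ containing every $Y_n$. To see that $Y$ is separable, choose for each $n$ a countable dense subset $A_n \subseteq Y_n$; then $\bigcup_n A_n$ is countable, and $\mathrm{span}_{\mathbb Q[i]}\!\left(\bigcup_n A_n\right)$ is a countable dense subset of $Y$. Hence $Y \in \Lambda$ is an upper bound, so every countable (in particular, finite) subset of $\Lambda$ has an upper bound and $\Lambda$ is $\aleph_1$-directed. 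This is the step where the separability hypothesis does the real work, and I expect it to be the main obstacle, although it remains elementary.

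It then remains to verify the universal property, which I expect to be routine. First, every $x \in X$ lies in some $\lambda \in \Lambda$, e.g. the one-dimensional (hence closed separable) subspace spanned by $x$; more generally, any finite subset of $X$ is contained in a single finite-dimensional $\lambda \in \Lambda$. Given another cocone $(Z, \{z_\lambda\}_{\lambda \in \Lambda})$, I would define $u \colon X \to Z$ by $u(x) \eqdef z_\lambda(x)$ for any $\lambda$ with $x \in \lambda$. Directedness of $\Lambda$ together with the cocone equations $z_\kappa \circ D_{\lambda,\kappa} = z_\lambda$ shows that this is independent of the chosen $\lambda$, and, by selecting a single $\lambda$ containing any given finite set of arguments, that $u$ is linear; contractivity follows from $\|u(x)\| = \|z_\lambda(x)\| \leq \|x\|$ since each $z_\lambda$ is a contraction. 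Uniqueness is immediate, since $u \circ \iota_\lambda = z_\lambda$ forces $u(x) = z_\lambda(x)$ whenever $x \in \lambda$, and the $\lambda$'s cover $X$. Thus $(X, \{\iota_\lambda\})$ is the colimit of $D$, exhibiting $X$ as a countably-directed colimit of its closed separable subspaces. Alternatively, one could instead identify the space $C$ produced by Construction \ref{constr:aleph1-directed-colimit-Ban} with $X$ by invoking Corollary \ref{cor:colimits-ban}.
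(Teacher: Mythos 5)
Your proposal is correct and follows essentially the same route as the paper: the same poset $\Lambda$ of closed separable subspaces, the same countable-directedness argument via the $\mathbb{Q}[i]$-span of a union of countable dense subsets, and the inclusions into $X$ as the colimiting cocone. The only difference is that you spell out the universal property (well-definedness, linearity, contractivity, uniqueness of the mediating map), which the paper leaves as an obvious verification.
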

\begin{proof}
  Let $X$ be a Banach space.
  The embedding of any closed subspace of $X$ into $X$ is an isometry. Hence, if $\Lambda$ denotes the poset of all closed separable subspaces of $X$ ordered by inclusion,
  then every element of $\Lambda$ is a Banach space. Moreover, $\Lambda$ is countably-directed. To see this, let $\{ C_n \ |\ n \in \mathbb N \} \subseteq \Lambda$
  be a countable family of closed separable subspaces  and for every $n \in \mathbb N$, let $S_n \subseteq C_n$ be a countable dense subset of $C_n.$
  Standard topological results show that
  \[ C_n \subseteq \bigcup_{k \in \mathbb N} C_k = \bigcup_{k \in \mathbb N} \overline{S_k} \subseteq \overline{\bigcup_{k \in \mathbb N} S_k }
    \subseteq \overline{\mathrm{span}_{\mathbb Q[i]}\left( \bigcup_{k \in \mathbb N} S_k \right) }, \]
  where the last set is a closed separable subspace of $X$. Therefore $\Lambda$ is countably-directed. The diagram $D \colon \Lambda \to \Ban$ defined in the obvious way
  is such that $\mathrm{colim}\ D=X$ with colimiting cocone given by the subspace inclusions into $X$.
\end{proof}

We can now fully characterise the countably-presentable objects of $\Ban.$

\begin{theorem}\label{thm:separable-banach}
    A Banach space $X$ is countably-presentable in $\Ban$ iff $X$ is separable.
\end{theorem}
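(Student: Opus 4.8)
The plan is to prove the two implications separately, relying on the propositions already established. For the direction asserting that \emph{separable implies countably-presentable}, the work has essentially already been done in Proposition \ref{prop:separable Banach spaces are presentable}. First I would recall that, by Proposition \ref{prop:presentable-object}, it suffices to verify the two factorisation conditions for \emph{every} countably-directed diagram $D \colon \Lambda \to \Ban$ and every colimiting cocone of it. By Corollary \ref{cor:colimits-ban}, every such colimit is (isomorphic to) the one described in Construction \ref{constr:aleph1-directed-colimit-Ban}, and since the conditions in Proposition \ref{prop:presentable-object} are invariant under isomorphism of colimiting cocones, it is enough to check them for the concrete colimit $(C, \{c_\lambda\}_{\lambda \in \Lambda})$ of that construction. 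But conditions (1) and (2) of Proposition \ref{prop:presentable-object} are precisely what Proposition \ref{prop:separable Banach spaces are presentable} establishes for a separable $X$ (the contraction $g$ in part (1) and the essential-uniqueness clause in part (2)). Hence every separable Banach space $X$ is countably-presentable.

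For the converse, asserting that \emph{countably-presentable implies separable}, the key tool is Proposition \ref{prop:ban-self-colimit}. Let $X$ be countably-presentable, and let $D \colon \Lambda \to \Ban$ be the countably-directed diagram of all closed separable subspaces of $X$ ordered by inclusion, whose colimit is $X$ itself with colimiting cocone given by the subspace inclusions $c_\lambda \colon D_\lambda \hookrightarrow X$. Applying the factorisation property of Proposition \ref{prop:presentable-object} (condition (1)) to the identity morphism $\id_X \colon X \to X$, I obtain some $\lambda \in \Lambda$ and a contraction $g \colon X \to D_\lambda$ with $\id_X = c_\lambda \circ g$.

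Since $c_\lambda$ is the inclusion of the closed separable subspace $D_\lambda$ into $X$, the equation $c_\lambda \circ g = \id_X$ forces every $x \in X$ to satisfy $x = c_\lambda(g(x)) = g(x) \in D_\lambda$, whence $X = D_\lambda$. As $D_\lambda$ is separable by construction, $X$ is separable, completing the proof. The argument is essentially immediate once Propositions \ref{prop:separable Banach spaces are presentable} and \ref{prop:ban-self-colimit} are in hand, so I do not anticipate a genuine obstacle; the only mild subtlety is the passage, in the first implication, from the concrete colimit of Construction \ref{constr:aleph1-directed-colimit-Ban} to an arbitrary colimiting cocone, which is justified by the isomorphism-invariance of the characterisation in Proposition \ref{prop:presentable-object}.
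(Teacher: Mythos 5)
Your proof is correct and follows essentially the same route as the paper: the ($\Leftarrow$) direction combines Proposition \ref{prop:separable Banach spaces are presentable} with Proposition \ref{prop:presentable-object}, and the ($\Rightarrow$) direction factorises $\id_X$ through the countably-directed diagram of closed separable subspaces from Proposition \ref{prop:ban-self-colimit}. Your explicit remark that the conditions of Proposition \ref{prop:presentable-object} are invariant under isomorphism of colimiting cocones, so that it suffices to check them on the concrete colimit of Construction \ref{constr:aleph1-directed-colimit-Ban}, is a point the paper leaves implicit.
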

\begin{proof}
  ($\Leftarrow$) Combine Proposition \ref{prop:separable Banach spaces are
  presentable} and Proposition \ref{prop:presentable-object}.
    
  ($\Rightarrow$)
  Using Proposition \ref{prop:ban-self-colimit}, consider the same diagram $D$ consisting of the closed separable subspaces of $X$.
  We write $D_\lambda$ for the objects of the diagram and $c_\lambda \colon D_\lambda \subseteq X$ for the subspace inclusions of the colimiting cocone.
  This allows us to see the identity $\id \colon X \to X$ also as a map
  $\id \colon X\to\mathrm{colim}\ D$.
  Since $X$ is countably-presentable, there is some $\lambda \in \Lambda$ such
  that $c_\lambda \circ g= \id_X$ for some $g\colon X \to D_\lambda$, again by
  Proposition \ref{prop:presentable-object}.  Therefore $X = D_\lambda$ and so
  $X$ must be separable.
\end{proof}

\section{Local Presentability of Operator Spaces}
\label{sec:os}

In this section we prove the main the result of our paper, namely that the
category $\OS$ is locally countably presentable. We begin by recalling some
preliminaries on operator spaces in Subsection \ref{sub:os-preliminaries}. We
show that $\OS$ is complete and cocomplete in Subsection \ref{sub:os-colimits}. In
Subsection \ref{sub:generators} we identify the relevant strong generators for
the category $\OS$. In Subsection \ref{sub:presentable} we prove that the
countably-presentable objects in $\OS$ are precisely the separable operator
spaces. Finally, we combine these results in Subsection
\ref{sub:os-locally-presentable} to prove the local presentability of $\OS.$

\subsection{Preliminaries on Operator Spaces}
\label{sub:os-preliminaries}

We begin by recalling some background on operator spaces and we also use this
as an opportunity to fix notation. The material we present in this subsection
is standard and it is based on the textbook accounts
\cite{effros-ruan,blecher-merdy,pisier}. Note that reference
\cite{effros-ruan}, which we use extensively, is the newer (2022) version of
the book and there are some differences compared to the older edition.

\begin{definition}[Matrix Space]
  Let $V$ be a vector space. We write $\MM_n(V)$ for the vector space
  consisting of the $n \times n$ matrices with matrix entries in the vector
  space $V$ (with vector space operations defined componentwise).  When $V =
  \mathbb C$, we often write $\mathbb M_n$ for $\mathbb M_n(\mathbb C).$
\end{definition}

The vector space $\MM_n$ can be equipped with a Banach space norm in a
canonical way that we now describe.  There exists a linear isomorphism
$\MM_n \cong B(\mathbb C^n)$ with the space of (bounded) linear
operators on the Hilbert space $\mathbb C^n$. The space $B(\mathbb C^n)$ has a
canonical norm (i.e.  the operator norm) which can then be used to define a
norm on $\MM_n$ via the above isomorphism. Writing $M_n$ for the
corresponding Banach space, the aforementioned linear isomorphism now becomes
an isometric isomorphism $M_n \cong B(\mathbb C^n).$

\begin{definition}
  \label{def:operator-space}
  An \emph{abstract operator space} is a vector space $X$ together with a family of norms $\{ \norm{-}_n \colon \MM_n(X) \to [0, \infty) \ |\ n \in \mathbb N \}$, such that:
  \begin{enumerate}
    \item[(B)] The norm $\norm{-}_1 \colon \MM_1(X) \to [0,\infty)$ gives $\MM_1(X)$ the structure of a Banach space;
    \item[(M1)] $\|x\oplus y\|_{m+n}=\max\{\|x\|_m,\|y\|_n\}$
    \item[(M2)] $\|\alpha x\beta\|_m\leq\|\alpha\|\|x\|_m\|\beta\|$
  \end{enumerate}
  for each $n,m\in\mathbb N$, $x\in\MM_m(X)$, $y\in\MM_n(X)$, $\alpha,\beta\in M_{m}$. Here $x\oplus y\in \MM_{m+n}(X)$ is defined as the matrix 
  \[ 
    x\oplus y\eqdef
    \begin{bmatrix}
      x & 0\\
      0 & y
    \end{bmatrix}
  \]
  and $\alpha x \beta$ is defined through the obvious generalisation of matrix
  multiplication.
  For an operator space $X$, we write $M_n(X)$ for the normed space $(\MM_n(X),
  \norm{-}_n)$. We also write $\norm{-} \colon X \to [0, \infty)$ for the norm
  defined on $X$ through the linear isomorphism $M_1(X) \cong X.$ We
  call a family of norms that satisfies the above criteria an \emph{operator
  space structure} (OSS) on the vector space $X$.
\end{definition}

Given an operator space $X$, it is clear that $X$ is a Banach space with
respect to the norm $\norm{-}$ described above and textbook results show that
each space $M_n(X)$ is also a Banach space.

\begin{example}
  The vector space of complex numbers $\mathbb C$ has a unique operator space
  structure \cite[Chapter 3]{pisier} given by $M_n(\mathbb C) \eqdef M_n$ (see
  paragraph above Definition \ref{def:operator-space}).
\end{example}

Let $V$ be a vector space and let $X$ be a Banach space for which there is a
linear isomorphism $\varphi:V\to X$. Then $V$ can be equipped with a norm
$\| \cdot \|$ defined by $\|v\|\eqdef\|\varphi(v)\|$, and when equipped with this
norm, $V$ becomes a Banach space that is isometrically isomorphic to $X$ via
$\varphi$. If, moreover, $X$ is an operator space, then $V$ can be equipped
with an OSS by defining for each $n\in\mathbb N$ the norm $\|\cdot \|_n$ on
$\MM_n(V)$ by $\|[x_{ij}]\|_n\eqdef\|[\varphi(x_{ij})]\|_n$ for each
$[x_{ij}]\in \MM_n(V)$. With this OSS, $V$ clearly becomes completely
isometrically isomorphic to $X$ via $\varphi$.

\begin{example}
  For an operator space $X$, each of the Banach spaces $M_n(X)$ have a
  canonical operator space structure as well. In particular, we can define a norm on $\MM_m(M_n(X))$ through the linear isomorphism $\MM_m(M_n(X)) \cong M_{mn}(X)$
  and these norms determine an OSS on $M_n(X).$ This gives the canonical OSS on $M_n = M_n(\mathbb C).$
\end{example}

\begin{example}
  Every von Neumann algebra, and more generally, every C*-algebra has a
  canonical OSS. If $A$ is a C*-algebra (von Neumann algebra), then each matrix
  space $\MM_n(A)$ has a unique norm under which it can be equipped with the
  structure of a C*-algebra (von Neumann algebra). These norms then give $A$
  its canonical OSS. Therefore, if $H$ is a Hilbert space, then $B(H)$ has a
  canonical OSS where the norm on $\MM_n(B(H))$ can be defined through the
  linear isomorphism $\MM_n(B(H)) \cong B(H^{\oplus n})$, where the latter
  Banach space is equipped with the operator norm and where $H^{\oplus n}$ is
  the Hilbert space given by the $n$-fold direct sum of $H$ with itself.
  In particular, if $H$ has dimension $n$, then $H \cong \mathbb C^n$ and so the OSS of
  $B(H) \cong M_n$ can be identified with that of $M_n.$
\end{example}

Next, we introduce some of the types of morphisms of operator spaces that we
are interested in.

\begin{definition}
  Let $u \colon X\to Y$ be a linear map between vector spaces $X$ and $Y$. We write
  $u_n:\MM_n(X)\to\MM_n(Y)$ for the linear map  $[x_{ij}]\mapsto [u(x_{ij})]$,
  i.e. the map defined by component-wise application of $u$. If
  $X$ and $Y$ are operator spaces, we say that $u$ is \emph{completely bounded} if
  $\|u\|_{\mathrm{cb}}\eqdef\sup_{n\in\mathbb N}\|u_n\|<\infty$, where $\norm{u_n}$
  is the operator norm of the map $u_n \colon M_n(X) \to M_n(Y)$ between
  the indicated Banach spaces.
  We write $\CB(X,Y)$ for the Banach
  space of all completely bounded maps from $X$ to $Y$ equipped with the
  $\| \cdot \|_{\mathrm{cb}}$-norm. Furthermore, we say that a linear map $u \colon X\to Y$
  between operator spaces is:
  \begin{itemize}
    \item a \emph{complete contraction}, if $u_n$ is a contraction for each $n \in \mathbb N$, equivalently if $\norm{u}_{\mathrm{cb}} \leq 1$;
    \item a \emph{complete isometry}, if $u_n$ is an isometry for each $n \in \mathbb N$;
    \item a \emph{complete quotient map},\footnote{These maps are also known under the name \emph{complete metric surjections}.}
      if $u_n$ is a quotient map for each $n \in \mathbb N$, i.e. if every map $u_n \colon M_n(X) \to M_n(Y)$ maps the open unit ball of $M_n(X)$ \emph{onto} the open unit ball of $M_n(Y)$.
    \item a \emph{completely isometric isomorphism}, if $u$ is a surjective complete isometry.
  \end{itemize}
\end{definition}

\begin{example}\cite[Section 3.2]{effros-ruan}
    Let $X$ and $Y$ be operator spaces. Then $\CB(X,Y)$ has a canonical
    operator space structure that can be defined via the linear isomorphism $
    \MM_n( \CB(X,Y) )\cong \CB(X,M_n(Y))  $ and the Banach space structure of
    the latter space.
\end{example}

A special case of the previous example is obtained if $Y=\mathbb C$. Since for
any commutative C*-algebra $A$ and any bounded linear map $\varphi:X\to A$ we
have $\|\varphi\|_{\mathrm{cb}}=\|\varphi\|$ \cite[Proposition 2.2.6]{effros-ruan}, it
follows that $\CB(X,\mathbb C)=B(X,\mathbb C)=X^*$, where $X^*$ denotes the
Banach space dual of $X$. Hence we have a linear isomorphism $\MM_n(X^*)\cong
\CB(X,M_n)$ \cite[p. 41]{effros-ruan}, which defines a norm on $\MM_n(X^*)$.
Denoting the corresponding normed space by $M_n(X^*)$, we obtain an isometric
isomorphism $M_n(X^*) \cong \CB(X,M_n)$ which yields an operator space structure
on $X^*$.

\begin{example}
  The matrix space $\MM_n(\mathbb C)$ can be equipped with another OSS that is
  called the \emph{trace class} OSS. This can be achieved through the linear
  isomorphism $\MM_n(\mathbb C) \cong M_n^*$ and the OSS of the
  latter space. We write $T_n$ for the corresponding operator space.
\end{example}

\subsection{Limits and Colimits in $\OS$}
\label{sub:os-colimits}

A necessary condition for a category to be locally presentable is for it to be
cocomplete and indeed this is part of the definition that we presented.
Another necessary condition is for it to also be complete \cite[Corollary
1.28]{lp-categories-book} which follows from Definition
\ref{def:locally-presentable} in a non-trivial way.  Despite this, it can be
useful to know how limits are constructed in the category of operator spaces,
so we show how this can be achieved for products and equalisers from which all
other limits can be built in a canonical way.

\begin{definition}
  We write $\OS$ for the (locally small) category whose objects are the
  operator spaces and whose morphisms are the linear complete contractions
  between them.
\end{definition}

It is easy to see that $f \colon X \to Y$ in $\OS$ is an isomorphism in the
categorical sense iff $f$ is a completely isometric isomorphism of operator
spaces. The category $\OS$ has a zero object, written $0$, i.e. an object which
is both initial and terminal, which is given by the zero-dimensional operator
space. The constant-zero map $0_{X,Y} \colon X \to Y$ coincides precisely with
the concept of a zero morphism in $\OS$, i.e. the unique morphism from $X$ to
$Y$ which factorises through the zero object $0$. We sometimes simply write $0$
instead of $0_{X,Y}$ as this usually does not lead to confusion.

\begin{remark}
  The adjunctions \eqref{eq:triple-adjunctions} from Subsection
  \ref{sub:generators} show that the forgetful functor $U \colon \OS \to \Ban$
  is both a left and a right adjoint, so it preserves limits and colimits.
  Therefore limits and colimits in $\OS$ have to be constructed in a compatible
  way to those in $\Ban.$
\end{remark}

Before we may describe the construction of categorical products,
recall that, given an indexed family $(X_\lambda)_{\lambda\in\Lambda}$ of Banach 
spaces, their $\ell^\infty$-direct sum is the Banach space
\[
  \bigoplus^\infty_{\lambda\in\Lambda}X_\lambda \eqdef \left\{ 
(x_\lambda)_{\lambda \in \Lambda} \in \prod_{\lambda\in\Lambda}X_\lambda   \ |\ 
  \sup_{\lambda\in\Lambda}\norm{x_\lambda} < \infty \right\}
\]
equipped with the $\ell^\infty$-norm given by $\norm{(x_\lambda)_{\lambda\in\Lambda}}_\infty\eqdef\sup_{\lambda\in\Lambda}\norm{x_\lambda}$.

\begin{proposition}
  \label{prop:products}
  Let $(X_\lambda)_{\lambda\in\Lambda}$ be an indexed family of operator spaces. Then $\bigoplus^\infty_{\lambda\in\Lambda}X_\lambda$ can be equipped with an OSS such that 
  \begin{itemize}
    \item[(a)] $M_n\left(\bigoplus_{\lambda\in\Lambda}^\infty X_\lambda\right)=\bigoplus^\infty_{\lambda\in \Lambda}M_n(X_\lambda)$.
    \item[(b)] for each $\kappa\in\Lambda$, the canonical inclusion $\iota_\kappa:X_\kappa\to \bigoplus^\infty_{\lambda\in\Lambda}X_\lambda$ is a complete isometry;
    \item[(c)] for each $\kappa\in \Lambda$, the canonical projection $\pi_\kappa \colon \bigoplus^\infty_{\lambda\in \Lambda}X_\lambda \to X_\kappa$ is a complete quotient map;
    \item[(d)] the operator space $\bigoplus^\infty_{\lambda\in\Lambda}X_\lambda$ together with the projections $\pi_\lambda$ constitute the categorical product of the family $(X_\lambda)_{\lambda \in \Lambda}.$
  \end{itemize}
\end{proposition}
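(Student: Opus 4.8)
The plan is to equip the underlying Banach space $\bigoplus^\infty_{\lambda\in\Lambda}X_\lambda$ with an OSS by specifying the norms on each matrix level via clause (a), then verify the axioms of Definition \ref{def:operator-space}, and finally establish (b)--(d) in turn. Concretely, I would define the norm on $\MM_n\!\left(\bigoplus^\infty_{\lambda}X_\lambda\right)$ by first observing the natural linear isomorphism $\MM_n\!\left(\bigoplus^\infty_{\lambda}X_\lambda\right)\cong \bigoplus^\infty_\lambda \MM_n(X_\lambda)$ (an $n\times n$ matrix of $\Lambda$-indexed families is the same data as a $\Lambda$-indexed family of $n\times n$ matrices), and then using this to transport the $\ell^\infty$-direct sum norm of the Banach spaces $M_n(X_\lambda)$ onto $\MM_n\!\left(\bigoplus^\infty_{\lambda}X_\lambda\right)$. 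That is, for a matrix $x=[x_{ij}]$ with $x_{ij}=(x_{ij}^\lambda)_\lambda$, I set $\|x\|_n \eqdef \sup_{\lambda\in\Lambda}\|[x_{ij}^\lambda]\|_{M_n(X_\lambda)}$. This makes (a) true by construction.

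Next I would check the operator space axioms. Axiom (B) is immediate since the $n=1$ level is just the original $\ell^\infty$-direct sum, which is a Banach space. For (M1) and (M2), the key point is that both axioms are stated \emph{coordinatewise} in terms of operations ($\oplus$, left/right multiplication by scalar matrices $\alpha,\beta\in M_m$) that commute with the projections to each $X_\lambda$. So I would verify (M1) by computing that $\|x\oplus y\|_{m+n}=\sup_\lambda \|x^\lambda\oplus y^\lambda\|_{m+n}=\sup_\lambda \max\{\|x^\lambda\|_m,\|y^\lambda\|_n\}=\max\{\sup_\lambda\|x^\lambda\|_m,\sup_\lambda\|y^\lambda\|_n\}=\max\{\|x\|_m,\|y\|_n\}$, using (M1) for each $X_\lambda$ and the fact that a supremum of pointwise maxima is the max of the suprema. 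For (M2), since $\alpha x\beta$ has $\lambda$-coordinate $\alpha x^\lambda\beta$, I get $\|\alpha x\beta\|_m=\sup_\lambda\|\alpha x^\lambda\beta\|_m\leq \sup_\lambda \|\alpha\|\,\|x^\lambda\|_m\,\|\beta\|=\|\alpha\|\,\|x\|_m\,\|\beta\|$, applying (M2) in each $X_\lambda$.

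For (b), the inclusion $\iota_\kappa$ sends $x\in M_n(X_\kappa)$ to the family supported at $\kappa$, and $(\iota_\kappa)_n$ of this is again supported at $\kappa$, so $\|(\iota_\kappa)_n(x)\|_n=\sup_\lambda\|\cdots\|=\|x\|_{M_n(X_\kappa)}$, giving a complete isometry. For (c), each $(\pi_\kappa)_n$ is the coordinate projection $\bigoplus^\infty_\lambda M_n(X_\lambda)\to M_n(X_\kappa)$, which is a standard $\ell^\infty$ quotient map (any ball element of $M_n(X_\kappa)$ lifts to the family that is zero off $\kappa$, of equal norm), so $u_n$ is a quotient map at every level and $\pi_\kappa$ is a complete quotient map.

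The substantive step is (d), the universal property. The forgetful functor $U\colon\OS\to\Ban$ is faithful and the underlying Banach space is the $\ell^\infty$-product, which \emph{is} the categorical product in $\Ban$; so given complete contractions $f_\lambda\colon Z\to X_\lambda$, the unique Banach-space mediating map $f\colon Z\to \bigoplus^\infty_\lambda X_\lambda$, $f(z)=(f_\lambda(z))_\lambda$, is forced, and I only need to check that $f$ is a \emph{complete} contraction (contractivity at every matrix level), since commutativity with the $\pi_\lambda$ and uniqueness are inherited from $\Ban$. This is where the OSS definition does the work: for $z=[z_{ij}]\in M_n(Z)$, the $\lambda$-coordinate of $f_n(z)$ is $(f_\lambda)_n([z_{ij}])$, so $\|f_n(z)\|_n=\sup_\lambda \|(f_\lambda)_n(z)\|_{M_n(X_\lambda)}\leq \sup_\lambda \|z\|_{M_n(Z)}=\|z\|_n$, using that each $f_\lambda$ is completely contractive. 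Hence $\|f_n\|\leq 1$ for all $n$ and $f\in\OS$. The main obstacle to guard against is a size subtlety: one must confirm that $f(z)$ genuinely lies in the $\ell^\infty$-direct sum, i.e.\ that $\sup_\lambda\|f_\lambda(z)\|<\infty$ — this holds because each $f_\lambda$ is a contraction, so $\|f_\lambda(z)\|\leq\|z\|$ uniformly in $\lambda$, and the same uniform bound at each matrix level keeps $f_n(z)$ inside the direct sum. This uniform-boundedness of \emph{contractions} (as opposed to merely bounded maps) is exactly why $\OS$, rather than $\OScb$, admits these products, matching the discussion in the introduction.
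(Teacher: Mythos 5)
Your proof is correct, and the underlying construction coincides with the paper's: the same OSS determined by clause (a), the same canonical inclusions and projections, and the same mediating map $f(z)\eqdef(f_\lambda(z))_{\lambda\in\Lambda}$. The difference is purely in how the work is distributed. The paper's proof delegates (a), (b), (c), and the complete contractivity of the mediating map to a citation of \cite[(1.2.17)]{blecher-merdy}, arguing only uniqueness in (d) (which, as in your version, is immediate since the coordinates $\pi_\lambda(g(z))$ determine $g(z)$); you instead verify everything directly from the axioms of Definition \ref{def:operator-space}, which buys a self-contained argument at modest extra length, and your coordinatewise checks of (M1) and (M2) are exactly right. One step you gloss over deserves a line: the identification $\MM_n\bigl(\bigoplus^\infty_{\lambda}X_\lambda\bigr)\cong\bigoplus^\infty_{\lambda}\MM_n(X_\lambda)$ is not purely formal bookkeeping, because membership on the left means each \emph{entry} family $(x^\lambda_{ij})_\lambda$ is norm-bounded, while membership on the right means the \emph{matrix} norms $\bigl\lVert[x^\lambda_{ij}]\bigr\rVert_{M_n(X_\lambda)}$ are uniformly bounded; these conditions agree by Lemma \ref{lem:norm X vs norm MnX}, parts (1) and (2) (entry norms are dominated by the matrix norm, which is in turn dominated by the sum of the entry norms), so the isomorphism does hold, but the check should be stated. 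Your closing observation --- that the tuple $(f_\lambda(z))_\lambda$ lands in the $\ell^\infty$-sum precisely because the $f_\lambda$ are uniformly contractive, which is where $\OS$ behaves better than $\OScb$ --- is accurate and matches the paper's own motivation in the introduction.
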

\begin{proof}
  This follows immediately from \cite[(1.2.17)]{blecher-merdy}. In slightly more detail,
  property (a) can be seen as the definition of the OSS and (b) and (c) are claimed 
  in \cite[(1.2.17)]{blecher-merdy}. For property (d), if we are given an
  operator space $Z$ together with complete contractions $f_\lambda \colon Z
  \to X_\lambda$ for every $\lambda \in \Lambda$,
  the function $f \colon Z \to
  \bigoplus^\infty_{\lambda\in\Lambda}X_\lambda $ defined by
  $f(z) \eqdef (f_\lambda(z))_{\lambda \in \Lambda}$
  is a complete contraction \cite[(1.2.17)]{blecher-merdy}
  and it is obvious that this is the unique function with the property that
  $\pi_\lambda \circ f = f_\lambda$ for each $\lambda \in \Lambda.$
\end{proof}

For the coproduct construction, we first need to recall the definition of
potentially uncountable sums in a Banach space.
Let $\Lambda$ be a (potentially uncountable) index set. We say that the sum of an
indexed family $(x_\lambda)_{\lambda\in\Lambda}$ of vectors in a Banach space $X$
\emph{converges} if there is $x\in X$ such that the net $\left\{\sum_{\lambda\in F}x_\lambda:F\subseteq\Lambda\text{ finite}\right\}$ converges to $x$,
in which case we write $\sum_{\lambda\in\Lambda}x_\lambda=x$.
The convergence of the net to $x$ implies that for each
$\epsilon>0$ there is a finite subset $F\subseteq \Lambda$ such that for each
finite subset $F \subseteq G\subseteq \Lambda,$ we have
$\left\|\sum_{\lambda \in G} x_\lambda-x\right \| < \epsilon$. We also recall that a sufficient
condition for $\sum_{\lambda \in \Lambda} x_\lambda$ to exist is that $\sum_{\lambda \in \Lambda} \norm{ x_\lambda } < \infty$ and in this case
$\norm{\sum_{\lambda \in \Lambda} x_\lambda} \leq \sum_{\lambda \in \Lambda} \norm{ x_\lambda } . $

Recall that, given an indexed family $(X_\lambda)_{\lambda\in\Lambda}$ of Banach spaces,
their $\ell^1$-direct sum is the Banach space
\[ 
    \bigoplus_{\lambda\in\Lambda}^1X_\lambda \eqdef \left\{ (x_\lambda)_{\lambda\in\Lambda} \in \prod_{\lambda\in\Lambda}X_\lambda \ |\ \sum_{\lambda\in\Lambda}\norm{x_\lambda}<\infty  \right\}
\]
equipped with the $\ell^1$-norm given by
$\norm{(x_\lambda)_{\lambda\in\Lambda}}_1\eqdef\sum_{\lambda\in\Lambda}\norm{x_\lambda}$.

\begin{proposition}
  \label{prop:coproducts}
  Let $(X_\lambda)_{\lambda\in\Lambda}$ be an indexed family of operator spaces. Then $\bigoplus_{\lambda\in\Lambda}^1X_\lambda$ can be equipped with an OSS such that 
  \begin{itemize}
    \item[(a)] there exists a complete isometric isomorphism of dual operator spaces $\left(\bigoplus^1_{\lambda\in\Lambda}X_\lambda\right)^*\cong\bigoplus_{\lambda\in\Lambda}^\infty X_\lambda^*$;
    \item[(b)] for each $\kappa\in\Lambda$, the canonical inclusion $\iota_\kappa:X_\kappa\to \bigoplus^1_{\lambda\in\Lambda}X_\lambda$ is a complete isometry;
    \item[(c)] for each $\kappa\in \Lambda$, the canonical projection $\pi_\kappa:\bigoplus^1_{\lambda\in \Lambda}X_\lambda \to X_\kappa$ is a complete quotient map;
    \item[(d)] for each operator space $Y$, there is a complete isometric isomorphism $\CB\left(\bigoplus_{\lambda\in\Lambda}^1X_\lambda,Y\right)\cong\bigoplus^\infty_{\lambda\in\Lambda}\CB(X_\lambda,Y).$
    \item[(e)] the operator space $\bigoplus^1_{\lambda\in\Lambda}X_\lambda$ together with the inclusions $\iota_\lambda$ constitute the categorical coproduct of the family $(X_\lambda)_{\lambda \in \Lambda}.$
  \end{itemize}
\end{proposition}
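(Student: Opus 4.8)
The plan is to put an operator space structure on $\bigoplus^1_{\lambda\in\Lambda}X_\lambda$ by duality with the $\ell^\infty$-sum already handled in Proposition \ref{prop:products}, and then to read off the categorical statement (e) from the completely isometric identity (d). First I would note that, since each $X_\lambda^*$ is a (dual) operator space, Proposition \ref{prop:products} makes $\bigoplus^\infty_{\lambda\in\Lambda}X_\lambda^*$ an operator space, and hence so is its dual $\left(\bigoplus^\infty_{\lambda\in\Lambda}X_\lambda^*\right)^*$. The pairing $\langle (x_\lambda),(\varphi_\lambda)\rangle \eqdef \sum_\lambda \varphi_\lambda(x_\lambda)$ is absolutely convergent, because $\sum_\lambda|\varphi_\lambda(x_\lambda)| \leq \left(\sup_\lambda\norm{\varphi_\lambda}\right)\left(\sum_\lambda\norm{x_\lambda}\right) < \infty$ whenever $(x_\lambda)\in\bigoplus^1_{\lambda\in\Lambda}X_\lambda$ and $(\varphi_\lambda)\in\bigoplus^\infty_{\lambda\in\Lambda}X_\lambda^*$, and it realises $\bigoplus^1_{\lambda\in\Lambda}X_\lambda$ isometrically as the canonical operator space predual inside $\left(\bigoplus^\infty_{\lambda\in\Lambda}X_\lambda^*\right)^*$. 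I would define the OSS by pulling back the matrix norms along this embedding; with this definition property (a) is exactly the assertion that the pairing is a complete isometric isomorphism of dual operator spaces, which is the standard operator-space $\ell^1$--$\ell^\infty$ duality recorded in \cite{blecher-merdy,effros-ruan}.

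For (b) and (c) I would dualise Proposition \ref{prop:products}. A short pairing computation shows that, under the identification (a), the Banach adjoint $\iota_\kappa^*$ of the inclusion $\iota_\kappa\colon X_\kappa\to\bigoplus^1_{\lambda\in\Lambda}X_\lambda$ is the coordinate projection $\bigoplus^\infty_{\lambda\in\Lambda}X_\lambda^*\to X_\kappa^*$, which is a complete quotient map by Proposition \ref{prop:products}(c), while $\pi_\kappa^*$ is the inclusion $X_\kappa^*\to\bigoplus^\infty_{\lambda\in\Lambda}X_\lambda^*$, which is a complete isometry by Proposition \ref{prop:products}(b). The standard operator-space duality principles — namely that a map is a complete isometry when its adjoint is a complete quotient map, and a complete quotient map when its adjoint is a complete isometry — then give (b) and (c) at once.

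Property (d) is the universal CB-mapping property of the $\ell^1$-sum, and it is the technical heart of the statement. Using $\MM_n(\CB(Z,Y))\cong\CB(Z,M_n(Y))$ together with Proposition \ref{prop:products}(a), the completely isometric identity (d) reduces at each matrix level to the \emph{isometric} equality $\norm{f}_{\mathrm{cb}} = \sup_\lambda\norm{f\circ\iota_\lambda}_{\mathrm{cb}}$ for every complete contraction $f\colon\bigoplus^1_{\lambda\in\Lambda}X_\lambda\to M_n(Y)$; the inequality $\geq$ is immediate from (b), and the reverse inequality is the substantive estimate, for which I would cite \cite{blecher-merdy,effros-ruan}. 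I would also remark that (a) is just the special case $Y=\mathbb C$ of (d), since $\CB(X,\mathbb C)=X^*$.

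Finally, (e) follows formally from (d). Given any operator space $Y$ and complete contractions $f_\lambda\colon X_\lambda\to Y$, the tuple $(f_\lambda)_\lambda$ lies in $\bigoplus^\infty_{\lambda\in\Lambda}\CB(X_\lambda,Y)$ with $\ell^\infty$-norm $\sup_\lambda\norm{f_\lambda}_{\mathrm{cb}}\leq 1$; under the isomorphism (d) it corresponds to a unique $f\colon\bigoplus^1_{\lambda\in\Lambda}X_\lambda\to Y$ with $\norm{f}_{\mathrm{cb}}\leq 1$, that is, a complete contraction, satisfying $f\circ\iota_\lambda=f_\lambda$ for every $\lambda$, with uniqueness supplied by the injectivity half of the bijection in (d). Hence $\left(\bigoplus^1_{\lambda\in\Lambda}X_\lambda,\{\iota_\lambda\}\right)$ is the categorical coproduct in $\OS$. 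I expect the main obstacle to be the completely isometric content of (a)/(d) — the operator-space duality and the matrix-norm estimate $\norm{f}_{\mathrm{cb}}=\sup_\lambda\norm{f\circ\iota_\lambda}_{\mathrm{cb}}$ — rather than the categorical deduction of (e), which is routine once (d) is available.
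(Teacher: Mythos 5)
Your proposal is correct, and its overall architecture matches the paper's: the OSS is defined through the predual identification (a), the analytic content of (a)--(d) is delegated to the standard references, and (e) is deduced from (d) by a routine universal-property argument. The differences are local but worth noting. For (b) and (c) the paper simply cites \cite[1.4.13]{blecher-merdy}, whereas you rederive them by dualising Proposition \ref{prop:products} via the standard adjoint principles (complete isometry $\leftrightarrow$ adjoint is a complete quotient map, and vice versa); that is a valid, more self-contained alternative. The more substantive divergence is in the uniqueness half of (e), which is the only part the paper actually proves in detail: the paper shows that $\bigoplus^1_{\lambda\in\Lambda}X_\lambda$ is literally the coproduct in $\Ban$, by verifying that every $x=(x_\lambda)_{\lambda\in\Lambda}$ satisfies $x=\sum_{\lambda\in\Lambda}\iota_\lambda(x_\lambda)$ in norm, so that any \emph{contraction} agreeing with the $u_\lambda$ on the images of the $\iota_\lambda$ is determined by linearity and continuity; this yields uniqueness even among merely contractive (not necessarily completely contractive) comparison maps, independently of how the isomorphism in (d) is realised. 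You instead extract uniqueness from the injectivity of (d), which is legitimate only because the isomorphism there is the \emph{canonical} restriction map $f\mapsto (f\circ\iota_\lambda)_\lambda$ --- a point your matrix-level reduction to $\norm{f}_{\mathrm{cb}}=\sup_\lambda\norm{f\circ\iota_\lambda}_{\mathrm{cb}}$ implicitly supplies, and which is how the isomorphism is stated in \cite{blecher-merdy}; had (d) been available only as an abstract isometric isomorphism, injectivity alone would not pin down $f\circ\iota_\lambda=f_\lambda$. Both routes are sound: the paper's density argument buys the slightly stronger Banach-level uniqueness statement, while yours keeps the entire deduction inside the completely bounded framework at the cost of leaning on the precise form of (d).
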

\begin{proof}
  This follows easily from \cite[1.4.13]{blecher-merdy}, but we spell out the
  proof in more detail. Property (a) determines the OSS on
  $\bigoplus_{\lambda\in\Lambda}^1X_\lambda$ by using the predual isomorphism. Properties (b), (c) and (d) are also claimed there, so this leaves us with (e) which follows from (d) using some simple arguments.

  If $Z$ is another operator space and $u_\lambda \colon X_\lambda \to Z$, for $\lambda \in \Lambda$, are a family of complete contractions, we have to show that there exists
  a \emph{unique} complete contraction $u \colon \bigoplus_{\lambda \in \Lambda}^{1} X_\lambda \to Z$ with the property that $u \circ \iota_\lambda = u_\lambda.$ 
  The existence of a complete contraction with the required property is shown in \cite[1.4.13]{blecher-merdy}, but uniqueness has not been claimed. To finish the proof, it suffices to show the following (stronger) claim:
  there exists a unique (not necessarily complete) contraction $u \colon \bigoplus_{\lambda \in \Lambda}^{1} X_\lambda \to Z$ with the property that $u \circ \iota_\lambda = u_\lambda.$
  But this is true, because $\bigoplus_{\lambda \in \Lambda}^{1} X_\lambda$ is exactly the construction of the coproduct in $\Ban$.

  The preceding paragraph finished the proof, but nevertheless, let us recall the construction of the unique couniversal map (in $\Ban$).
  Let $X \eqdef \bigoplus_{\lambda \in \Lambda}^{1} X_\lambda$ and let $x \in X.$
  By construction of $X$, we have $x=(x_\lambda)_{\lambda\in\Lambda}$ for $x_\lambda\in X_\lambda$, $\lambda\in\Lambda$, and $\pi_\lambda(x)=x_\lambda$.
  By definition of the norm on $X$, we have $\|x\|_1=\sum_{\lambda\in\Lambda}\|x_\lambda\|$.
  Let $\epsilon>0$.
  Then there is a finite $F\subseteq \Lambda$ such that for each finite $G\subseteq \Lambda$ with $F\subseteq G$, we have $\left|\sum_{\lambda\in G}\|x_\lambda\|-\|x\|_1\right|<\epsilon$.
  Therefore,
  \[ 
    \left|\sum_{\lambda\in G}\|x_\lambda\|-\|x\|_1\right|
    = \left| -1 \cdot \sum_{\lambda \in (\Lambda -G)} \|x_\lambda\|\right| = \sum_{\lambda \in (\Lambda - G)}\|x_\lambda\| <\epsilon  . 
  \]
  Then, $\left\|\sum_{\lambda\in G}\iota_\lambda(x_\lambda)-x\right\|_1=\sum_{\lambda\in (\Lambda - G)} \norm{x_\lambda}< \epsilon$,
  which shows that $x=\sum_{\lambda\in\Lambda}\iota_\lambda(x_\lambda)$.    
  Therefore, the unique bounded map with the desired property is given
  by $u \left( \sum_\lambda \iota_\lambda(x_\lambda) \right) \eqdef \sum_\lambda u_\lambda(x_\lambda).$
  To see that the right sums exists (and therefore $u$ is well-defined), we note that $\sum_\lambda \norm{u_\lambda(x_\lambda)}
  \leq \sum_\lambda \norm{ x_\lambda} = \norm{x} < \infty$ and so the sum is absolutely convergent and therefore $\sum_\lambda u_\lambda(x_\lambda)$ converges. To see that $u$ is a contraction, observe that
  $\norm{u(x)} = \norm{\sum_\lambda u_\lambda(x_\lambda)} \leq \sum_\lambda \norm{u_\lambda(x_\lambda)} \leq \sum_\lambda \norm{x_\lambda}  = \norm{x} . $ Uniqueness now follows easily, because if $u' \colon X \to Z$
  is a contraction with $u' \circ \iota_\lambda = u_\lambda$, then for any $x\in X$, we have that
  \[ u'(x) = u'\left( \sum_\lambda \iota_\lambda(x_\lambda) \right) = \sum_\lambda u'(\iota_\lambda(x_\lambda)) = \sum_\lambda u_\lambda(x_\lambda) = u(x) , \]
  where the second equality follows by continuity and linearity of $u'.$ Therefore $u$ is the unique (complete) contraction with the required couniversal property.
\end{proof}

Next, we describe the construction of equalisers which is completely analogous to the case for Banach spaces.

\begin{proposition}
  \label{prop:equalisers}
  Let $f, g \colon X \to Y$ be two complete contractions. Their equaliser is
  given by the closed subspace $E \eqdef \{ x \in X\ |\ f(x) = g(x) \}$
  together with the completely isometric inclusion $E \subseteq X.$
\end{proposition}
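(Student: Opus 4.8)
The plan is to verify that $E$, defined as the closed subspace $\{x \in X \mid f(x) = g(x)\}$ with its inherited operator space structure and the inclusion $\iota \colon E \to X$, satisfies the universal property of the equaliser in $\OS$. Since the forgetful functor $U \colon \OS \to \Ban$ preserves limits (as noted in the remark preceding Proposition \ref{prop:products}, because $U$ is a right adjoint), and since equalisers in $\Ban$ are constructed exactly this way, the heavy lifting is already done at the level of the underlying Banach spaces. What remains is to check that the operator space structure on $E$ is the correct one and that the mediating map is a \emph{complete} contraction, not merely a contraction.

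First I would confirm that $E$ is a genuine operator space: it is a closed linear subspace of $X$ (closed because $f - g$ is continuous and $E = (f-g)^{-1}(0)$, and linear because $f, g$ are linear), and any closed subspace of an operator space inherits an OSS by restricting each matrix norm $\|-\|_n$ to $\MM_n(E) \subseteq \MM_n(X)$. Under this inherited structure the inclusion $\iota \colon E \to X$ is a complete isometry, since $\|-\|_n$ on $\MM_n(E)$ is literally the restriction of $\|-\|_n$ on $\MM_n(X)$. A short computation shows $f \circ \iota = g \circ \iota$: for $x \in E$ we have $f(x) = g(x)$ by definition.

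Next I would verify the universal property. Suppose $h \colon Z \to X$ is a complete contraction with $f \circ h = g \circ h$. Then $h(z) \in E$ for every $z \in Z$, so the corestriction $\bar h \colon Z \to E$ (defined by $\bar h(z) \eqdef h(z)$) is the unique \emph{linear} map satisfying $\iota \circ \bar h = h$; uniqueness is immediate since $\iota$ is injective. The point requiring attention is that $\bar h$ is a complete contraction. This follows directly from the fact that $\iota$ is a complete isometry: for each $n$ and each matrix $[z_{ij}] \in \MM_n(Z)$, we have
\[
  \| \bar h_n([z_{ij}]) \|_n = \| \iota_n(\bar h_n([z_{ij}])) \|_n = \| h_n([z_{ij}]) \|_n \leq \| [z_{ij}] \|_n,
\]
where the first equality uses that $\iota_n$ is isometric and the inequality uses that $h$ is a complete contraction. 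Hence $\|\bar h\|_{\mathrm{cb}} \leq 1$.

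\textbf{The main obstacle}, if any, is conceptual rather than technical: one must be careful that the OSS on $E$ is the subspace structure (so that $\iota$ is a complete \emph{isometry}, which is exactly what makes the norm-transfer in the displayed inequality work), and one must confirm that this subspace OSS is itself a legitimate operator space structure satisfying Ruan's axioms (B), (M1), (M2) from Definition \ref{def:operator-space}. This last point is routine, since all three axioms are inherited by passing to a closed subspace: (M1) and (M2) are norm identities/inequalities that hold in $\MM_{m+n}(X)$ and $\MM_m(X)$ and therefore restrict to the subspaces $\MM_{m+n}(E)$ and $\MM_m(E)$, while (B) holds because a closed subspace of a Banach space is again a Banach space. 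Once this is in place, the universal property argument above completes the proof, and the analogy with the Banach-space case is exact.
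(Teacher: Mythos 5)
Your proposal is correct and follows the same route as the paper's proof: $E$ as a closed subspace with the inherited (subspace) operator space structure, the completely isometric inclusion, and the corestriction of any competing cone map as the unique mediating morphism. The only difference is one of detail — the paper declares the corestriction ``obviously'' a complete contraction, whereas you spell out the norm-transfer computation via the complete isometry $\iota_n$ and check Ruan's axioms for the subspace OSS, which is a welcome elaboration rather than a different argument.
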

\begin{proof}
  $E$ is obviously a closed subspace of $X$ and if we write $e \colon E \to X$ for the subspace inclusion, then clearly $f \circ e = g \circ e.$
  If $Z$ is another operator space and $h \colon Z \to X$ is a complete contraction such that $f \circ h = g \circ h$, then $\Image(h) \subseteq E$,
  so we may corestrict $h$ to $\hat h \colon Z \to E$ and then $\hat h$ is obviously the unique complete contraction with the property $e \circ \hat h = h.$
\end{proof}

In order to construct coequalisers, we first need the concept of a
quotient operator space. We recall that if $N$ is a closed subspace of a
Banach space, then $X/N$ becomes a Banach space if we define
$\norm{[x]}_{X/N}=\inf_{y\in N}\norm{x+y}=\inf_{y\in[x]}\norm{y}$ for each
$[x]\in X/N$.  Hence, the quotient map $q:X\to X/N$ is always a contraction. If
$X$ is an operator space, then $M_n(N)$ is a closed subspace of $M_n(X)$, hence
$N$ is an operator space, which leads to the next proposition.

\begin{proposition}[{\cite[Proposition 3.1.1]{effros-ruan}}]
    Let $X$ be an operator space and $N\subseteq X$ a closed subspace. Then there is an OSS on $X/N$ such that $M_n(X/N)=M_n(X)/M_n(N)$.
    Moreover, the quotient map $q:X\to X/N$ is a \emph{complete quotient map}.
\end{proposition}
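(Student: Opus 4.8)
The plan is to transport the Banach-space quotient norm from each $M_n(X)/M_n(N)$ onto $\MM_n(X/N)$ and then verify that the resulting family of norms satisfies the operator space axioms (B), (M1), (M2) of Definition~\ref{def:operator-space}. First I would record the purely algebraic fact that applying $q$ entrywise gives a surjective linear map $q_n \colon \MM_n(X) \to \MM_n(X/N)$ whose kernel is exactly $\MM_n(N)$, so that there is a canonical linear isomorphism $\MM_n(X/N) \cong \MM_n(X)/\MM_n(N)$. Since $N$ is closed in $X$, the observation recorded just before the proposition gives that $M_n(N)$ is closed in $M_n(X)$, hence $M_n(X)/M_n(N)$ is a genuine Banach space; I then \emph{define} $\norm{\cdot}_n$ on $\MM_n(X/N)$ to be the quotient norm transported along this isomorphism. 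By construction this yields the desired identity $M_n(X/N) = M_n(X)/M_n(N)$, and in particular $\norm{x}_n = \inf\{\norm{z}_n : z \in \MM_n(X),\ q_n(z) = x\}$, with $\norm{q_n(z)}_n \le \norm{z}_n$ for every representative $z$.

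Axiom (B) is then immediate: for $n = 1$ it is precisely the standard fact that the quotient of a Banach space by a closed subspace is again a Banach space. For (M2), given $\alpha, \beta \in M_m$ and $x \in \MM_m(X/N)$, I would pick any representative $z$ of $x$; since $q$ is linear and $\alpha, \beta$ are scalar matrices, $\alpha z \beta$ is a representative of $\alpha x \beta$, so $\norm{\alpha x \beta}_m \le \norm{\alpha z \beta}_m \le \norm{\alpha}\,\norm{z}_m\,\norm{\beta}$ by (M2) for $X$; taking the infimum over all representatives $z$ gives $\norm{\alpha x \beta}_m \le \norm{\alpha}\,\norm{x}_m\,\norm{\beta}$.

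The main work is (M1), and the reverse inequality there is where I expect the real obstacle, since the quotient norm and the maximum do not interact transparently. The easy direction $\norm{x \oplus y}_{m+n} \le \max\{\norm{x}_m, \norm{y}_n\}$ follows by choosing representatives $z, w$ of $x, y$ with norms within $\epsilon$ of $\norm{x}_m, \norm{y}_n$: then $z \oplus w$ represents $x \oplus y$, and (M1) for $X$ gives $\norm{z \oplus w}_{m+n} = \max\{\norm{z}_m, \norm{w}_n\} < \max\{\norm{x}_m, \norm{y}_n\} + \epsilon$, so letting $\epsilon \to 0$ finishes it. For the reverse inequality I would take an arbitrary representative $u \in \MM_{m+n}(X)$ of $x \oplus y$ and extract its top-left corner by conjugating with the scalar projection $\alpha = I_m \oplus 0_n \in M_{m+n}$: a block computation shows $\alpha u \alpha = u_{11} \oplus 0_n$, where $u_{11} \in \MM_m(X)$ is a representative of $x$ (since entrywise $q$ sends $u_{11}$ to the top-left block $x$ of $x \oplus y$). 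Using (M1) and (M2) for $X$ together with $\norm{\alpha} = 1$, I get $\norm{u_{11}}_m = \norm{u_{11} \oplus 0_n}_{m+n} = \norm{\alpha u \alpha}_{m+n} \le \norm{u}_{m+n}$, and since $u_{11}$ represents $x$ we obtain $\norm{x}_m \le \norm{u_{11}}_m \le \norm{u}_{m+n}$. Taking the infimum over all such $u$ yields $\norm{x}_m \le \norm{x \oplus y}_{m+n}$, and symmetrically for $y$, so $\max\{\norm{x}_m, \norm{y}_n\} \le \norm{x \oplus y}_{m+n}$.

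Finally, the complete quotient claim is essentially built into the construction: under the identification $M_n(X/N) = M_n(X)/M_n(N)$, the amplification $q_n$ is exactly the Banach-space quotient map onto $M_n(X)/M_n(N)$, and a standard fact about Banach quotients is that such a map sends the open unit ball onto the open unit ball. Hence each $q_n$ is a quotient map, i.e.\ $q$ is a \emph{complete quotient map}. I expect the corner-extraction step in (M1) to be the only nonroutine point; everything else reduces to transporting the quotient norm and invoking the operator space axioms already available for $X$.
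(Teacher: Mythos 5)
Your proof is correct and complete: the paper gives no proof of its own here, deferring entirely to the citation \cite[Proposition 3.1.1]{effros-ruan}, and your argument --- transporting the quotient norms along the linear isomorphism $\MM_n(X/N) \cong \MM_n(X)/\MM_n(N)$ and then verifying (B), (M1), (M2), with the corner-compression $\alpha = I_m \oplus 0_n$ supplying the reverse inequality in (M1) --- is precisely the standard proof found in that reference. In particular you correctly identify the only two delicate points, namely that closedness of $M_n(N)$ in $M_n(X)$ is what makes the quotient norms definite, and that the complete-quotient claim reduces to the standard fact that a Banach-space quotient map carries the open unit ball onto the open unit ball at every matrix level.
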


\begin{proposition}
  \label{prop:coequalisers}
  Let $f,g \colon X \to Y$ be two complete contractions. Then their coequaliser is given by the operator space $E \eqdef Y / \overline{\Image(f-g)}$ together with the complete quotient map $q \colon Y \to E :: y \mapsto [y].$
\end{proposition}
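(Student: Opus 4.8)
The plan is to verify the three defining properties of a coequaliser: that $q$ is a well-defined morphism of $\OS$ that coequalises $f$ and $g$, that every complete contraction coequalising $f,g$ factors through $q$, and that the induced factorisation is again a complete contraction (with uniqueness coming for free from the surjectivity of $q$).

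First I would set $N \eqdef \overline{\Image(f-g)}$, which is by construction a closed subspace of $Y$. The preceding proposition then immediately yields that $E = Y/N$ carries a canonical OSS and that the quotient map $q \colon Y \to E$ is a complete quotient map, hence in particular a complete contraction and a genuine morphism of $\OS$. To see that $q \circ f = q \circ g$, observe that $q \circ (f-g)$ sends each $x \in X$ to $[(f-g)(x)] = 0$, since $(f-g)(x) \in \Image(f-g) \subseteq N$; thus $(E, q)$ is a cocone on the parallel pair $f,g$.

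Next I would establish the universal property. Let $h \colon Y \to Z$ be any complete contraction with $h \circ f = h \circ g$, so that $h \circ (f-g) = 0$ and hence $\Image(f-g) \subseteq \ker h$. Since $h$ is continuous and $\ker h$ is closed, taking closures gives $N \subseteq \ker h$, so $h$ factors through the vector-space quotient as a linear map $u \colon E \to Z$ determined by $u([y]) \eqdef h(y)$, which satisfies $u \circ q = h$. Uniqueness of $u$ \emph{as a function} is immediate, because $q$ is surjective, so any two candidates agreeing after precomposition with $q$ must coincide.

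The main obstacle --- and the only genuinely operator-space-theoretic step --- is to check that $u$ is a complete contraction, i.e.\ that $\norm{u_n} \leq 1$ for every $n$. Here I would use crucially that $q$ is a \emph{complete quotient map}, so that $q_n \colon M_n(Y) \to M_n(E)$ carries the open unit ball of $M_n(Y)$ onto the open unit ball of $M_n(E)$. Given any $w$ in the open unit ball of $M_n(E)$, choose $y$ in the open unit ball of $M_n(Y)$ with $q_n(y) = w$; then, using that $(-)_n$ respects composition, we have $u_n(w) = u_n(q_n(y)) = h_n(y)$, and $\norm{h_n(y)} \leq \norm{y} < 1$ because $h$ is a complete contraction. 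As $w$ ranges over the open unit ball this forces $\norm{u_n} \leq 1$, so $u$ is a complete contraction. Consequently $u$ is the unique $\OS$-morphism with $u \circ q = h$, which completes the verification that $(E, q)$ is the coequaliser of $f$ and $g$ in $\OS$.
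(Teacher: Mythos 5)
Your proof is correct and follows the same overall structure as the paper's: both establish $q \circ f = q \circ g$ from the fact that $(f-g)(x) \in \overline{\Image(f-g)}$ for all $x$, factor any coequalising complete contraction $h \colon Y \to Z$ through the quotient using $\Image(f-g) \subseteq \ker h$ and closedness of $\ker h$, and obtain uniqueness of the induced map from surjectivity of $q$. The only place you diverge is the key analytic step: the paper concludes that the induced map $E \to Z$ is a complete contraction by citing a result of Blecher and Le Merdy (their (1.2.15)), whereas you prove it directly, using that each $q_n$ maps the open unit ball of $M_n(Y)$ onto the open unit ball of $M_n(E)$ (the defining property of a complete quotient map, supplied by the proposition immediately preceding this one) together with $u_n \circ q_n = h_n$. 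Your inline argument is essentially the standard proof of the cited fact, so nothing is gained in generality, but it makes the proposition self-contained relative to the paper's stated preliminaries, at the cost of only a few extra lines; the paper's version is shorter but leans on an external reference.
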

\begin{proof}
  First, it is easy to see that $q \circ f = q \circ g.$ Indeed, $q \circ f = q
  \circ g$ iff $\forall x \in X.$ $[f(x)] = [g(x)]$ iff $\forall x \in X.$
  $f(x) - g(x) = (f-g)(x) \in \overline{\Image(f-g)}$ which is clearly true.

  To show couniversality, let $h \colon Y \to Z$ be a complete contraction such
  that $h \circ f = h \circ g.$ This implies that $\Image(f-g) \subseteq \ker(h)$ and since $\ker(h)$ is closed, then $\overline{\Image(f-g)} \subseteq \ker(h)$
  Then, the canonical map $\hat h \colon E \to Z :: [y] \mapsto h(y)$ induced by $h$ 
  and the quotient structure is a complete contraction \cite[(1.2.15)]{blecher-merdy}. Obviously, this is also the unique complete contraction such that the diagram
  \cstikz{coequaliser-proof.tikz}
  commutes.
\end{proof}

\begin{proposition}
  \label{prop:cocomplete}
  The category $\OS$ is complete and cocomplete.
\end{proposition}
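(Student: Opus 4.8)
The plan is to reduce the statement to the standard categorical criterion that a category is complete if and only if it has all small products together with all equalisers of parallel pairs, and dually that it is cocomplete if and only if it has all small coproducts together with all coequalisers of parallel pairs. All four of these ingredients have already been supplied by the preceding propositions, so the proof amounts to assembling them and invoking this well-known result.

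Concretely, for completeness I would cite Proposition \ref{prop:products}, which shows that every small family $(X_\lambda)_{\lambda \in \Lambda}$ of operator spaces admits a product, realised as the $\ell^\infty$-direct sum $\bigoplus^\infty_{\lambda \in \Lambda} X_\lambda$ with the OSS described there, together with Proposition \ref{prop:equalisers}, which shows that every parallel pair $f, g \colon X \to Y$ in $\OS$ has an equaliser. The criterion above then yields completeness of $\OS$ immediately. Dually, Proposition \ref{prop:coproducts} provides coproducts as the $\ell^1$-direct sums $\bigoplus^1_{\lambda \in \Lambda} X_\lambda$, and Proposition \ref{prop:coequalisers} provides coequalisers, so the dual criterion yields cocompleteness.

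Since every constituent construction is already in hand, I do not expect any genuine obstacle here; the proposition is essentially a bookkeeping corollary of the earlier results. The single point worth stating explicitly is that the product and coproduct constructions in Propositions \ref{prop:products} and \ref{prop:coproducts} are carried out over an \emph{arbitrary} small index set $\Lambda$, which is exactly the generality demanded by the completeness and cocompleteness criteria; no size restriction on $\Lambda$ is imposed. With that observation, the conclusion follows.
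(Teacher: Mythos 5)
Your proof is correct and is essentially identical to the paper's own: both invoke the standard criterion that (co)completeness follows from small (co)products plus (co)equalisers, citing Propositions \ref{prop:products}, \ref{prop:equalisers}, \ref{prop:coproducts}, and \ref{prop:coequalisers}. Your added remark about the arbitrary small index set $\Lambda$ is a fine, if unnecessary, clarification.
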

\begin{proof}
  Any category with coequalisers (Proposition \ref{prop:coequalisers}) and
  small coproducts (Proposition \ref{prop:coproducts}) has all small colimits.
  The existence of small limits follows from Proposition \ref{prop:equalisers}
  and Proposition \ref{prop:products}.
\end{proof}

\subsection{Strong generators in $\OS$}
\label{sub:generators}

For the locally presentable structure of a category, it is useful to
identify its strong generators. This is the main purpose of
this subsection. As a first step, it is useful to classify the monomorphisms
and the epimorphisms. The situation is completely analogous to the
classification of the same notions in $\Ban,$ where monomorphisms correspond to
the contractive injections and where the epimorphisms correspond to the
contractions with dense image.  The proofs are also completely analogous (see
\cite{borceux1}), but we provide them for completeness.

\begin{proposition}
  \label{prop:monomorphisms}
  Let $m \colon X \to Y$ be a complete contraction between operator spaces $X$
  and $Y.$ Then, $m$ is a monomorphism iff $m$ is an injection.
\end{proposition}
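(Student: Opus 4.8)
The plan is to prove both directions of the biconditional, where one direction is essentially immediate and the other requires a small categorical construction using the fact that $\OS$ has a zero object and suitable domains for test morphisms.

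For the forward direction, suppose $m$ is a monomorphism; I would show $m$ is injective. The natural approach is to exploit that $\mathbb C$ (with its unique operator space structure) is a generator, or more directly, to use the kernel. Consider any $x \in X$ with $m(x) = 0$. Since $\mathbb C$ is an operator space, I can define the complete contraction $k_x \colon \mathbb C \to X$ that sends $1 \mapsto \frac{x}{\norm{x}}$ (assuming $x \neq 0$; one must check this is a complete contraction, which follows because the one-dimensional space generated has the appropriate operator space structure). Then $m \circ k_x$ and $m \circ 0_{\mathbb C, X}$ are both the zero map $\mathbb C \to Y$. Since $m$ is monic, $k_x = 0_{\mathbb C, X}$, forcing $x = 0$. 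Thus $\ker(m) = 0$ and $m$ is injective. An alternative is to observe that the kernel of $m$ is a closed subspace $N \subseteq X$ and that the two composites $N \hookrightarrow X \xrightarrow{m} Y$ and $0 \colon N \to X \xrightarrow{m} Y$ agree, so the inclusion equals the zero map and hence $N = 0$.

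For the reverse direction, suppose $m$ is injective; I would show $m$ is a monomorphism. Let $f, g \colon Z \to X$ be complete contractions with $m \circ f = m \circ g$. Then for every $z \in Z$ we have $m(f(z)) = m(g(z))$, and injectivity of $m$ gives $f(z) = g(z)$ for all $z$, so $f = g$. This direction is purely set-theoretic and requires no operator space structure beyond the fact that morphisms in $\OS$ are in particular functions.

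I expect the main (very mild) obstacle to be in the forward direction: one must produce a genuine complete contraction out of a nonzero vector to witness noninjectivity categorically, which requires knowing that a single nonzero vector $x \in X$ gives rise to a complete contraction $\mathbb C \to X$ of norm one. This uses the fact that $\mathbb C$ carries the operator space structure with $M_n(\mathbb C) = M_n$ and that scaling by $\norm{x}^{-1}$ produces a norm-one element; the resulting map is automatically completely contractive since any bounded map into an operator space out of $\mathbb C$ has $\norm{\cdot}_{\mathrm{cb}} = \norm{\cdot}$. The cleaner route avoiding this computation is the kernel argument: form $N = \ker(m)$ as a closed subspace (hence an operator space by the discussion preceding Proposition \ref{prop:equalisers}), and compare the inclusion $e \colon N \to X$ with the zero morphism $0_{N,X}$; both compose with $m$ to the zero map $N \to Y$, so the monomorphism property yields $e = 0_{N,X}$, whence $N = 0$. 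I would likely present this kernel-based argument as it sidesteps any explicit norm estimate and relies only on the existence of the zero object and closed-subspace operator spaces established earlier.
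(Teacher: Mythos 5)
Your proposal is correct, and in fact contains two valid arguments. Your first variant (test morphisms $\mathbb C \to X$ built from unit-ball vectors) is essentially the paper's own proof: the paper takes $x_1, x_2$ with $m(x_1) = m(x_2)$, scales them by some $r > 0$ into the unit ball, constructs complete contractions $f, g \colon \mathbb C \to X$ with $f(1) = rx_1$ and $g(1) = rx_2$, and concludes $f = g$ from the monomorphism property; the reverse direction is dismissed as obvious, exactly as in your set-theoretic argument. The only cosmetic difference is that you reduce to showing $\ker(m) = 0$ (one test map against the zero morphism), while the paper compares two test maps directly. Your preferred kernel-based presentation, however, is a genuinely different route: instead of $\mathbb C$, the test object is the closed subspace $N = \ker(m)$ itself (an operator space with completely isometric inclusion), compared against the zero morphism $0_{N,X}$. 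This buys you freedom from any norm estimate -- you never need the fact that a norm-one vector induces a complete contraction out of $\mathbb C$ -- at the cost of invoking the closed-subspace OSS and the zero object. The paper's choice is arguably economical in context, since the same $\mathbb C$-test-map construction is reused anyway in Proposition \ref{prop:strong-generator} to show that $\mathbb C$ is a generator of $\OS$; and note that the completely contractive property of $1 \mapsto x$ for $\norm{x} \leq 1$, which you flag as the mild obstacle, is also justifiable via the adjunction \eqref{eq:triple-adjunctions}, because $\mathbb C = \Max(\mathbb C)$, so contractions out of $\mathbb C$ are automatically complete contractions.
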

\begin{proof}
  $(\Rightarrow)$ Let $x_1, x_2 \in X$ and assume that $m(x_1) = m(x_2)$. Let $r > 0$ be such that
  $rx_1, rx_2 \in \Ball(X).$ We can now construct two complete contractions $f,g \colon \mathbb C \to X$
  such that $f(1) = rx_1$ and $g(1) = rx_2$. Then $m(x_1) = m(x_2)$ iff $m(rx_1) = m(rx_2)$ iff $(m \circ f)(1) = (m \circ g)(1)$ iff $m \circ f = m \circ g$.
  Since $m$ is a monomorphism, it follows $f = g$ and therefore $x_1 = x_2.$

  $(\Leftarrow)$ Obvious.
\end{proof}

\begin{proposition}
  \label{prop:epimorphism}
  Let $e \colon X \to Y$ be a complete contraction between operator spaces $X$
  and $Y.$ Then, $e$ is an epimorphism iff the image $e[X]$ is dense in $Y.$
\end{proposition}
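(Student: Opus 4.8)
The plan is to prove the two directions separately, with the reverse implication being routine and the forward implication handled by contraposition using a quotient construction.

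For the easy direction $(\Leftarrow)$, I would assume that $e[X]$ is dense in $Y$ and show $e$ is epi. Suppose $f, g \colon Y \to Z$ are complete contractions with $f \circ e = g \circ e$. Then $f$ and $g$ agree on $e[X]$. Since every complete contraction is in particular bounded and hence continuous, and since operator spaces are Hausdorff (being normed spaces), two continuous maps that agree on the dense subset $e[X] \subseteq Y$ must coincide on all of $Y$; thus $f = g$. This is the same continuity-on-a-dense-subset argument already used repeatedly in Section \ref{sec:banach}.

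For the harder direction $(\Rightarrow)$, I would argue by contraposition: assuming $e[X]$ is \emph{not} dense, I will exhibit two distinct complete contractions equalised by $e$. Let $N \eqdef \overline{e[X]}$, which is a \emph{proper} closed subspace of $Y$. By the quotient proposition (\cite[Proposition 3.1.1]{effros-ruan}, recalled above), $Y/N$ carries a canonical OSS under which the quotient map $q \colon Y \to Y/N$ is a complete quotient map, and in particular a complete contraction. Since $N \neq Y$, the operator space $Y/N$ is nonzero, so $q \neq 0_{Y, Y/N}$. On the other hand, for every $x \in X$ we have $e(x) \in e[X] \subseteq N$, so $q(e(x)) = [e(x)] = 0$, which gives $q \circ e = 0_{X, Y/N} = 0_{Y,Y/N} \circ e$. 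Thus $q$ and the zero morphism $0_{Y,Y/N}$ are distinct complete contractions out of $Y$ that are equalised by $e$, contradicting the assumption that $e$ is an epimorphism.

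I expect no serious obstacle here, since the only nontrivial ingredient is the existence of the operator space quotient $Y/N$ together with the fact that its quotient map is a morphism of $\OS$, and this is exactly what the cited quotient proposition provides. The one point requiring care is verifying that $Y/N$ is genuinely nonzero (so that $q$ differs from the zero map), which follows immediately from $N$ being a \emph{proper} subspace. This mirrors the classification of epimorphisms in $\Ban$ and confirms the analogy emphasised before the statement.
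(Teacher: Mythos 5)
Your proof is correct and essentially identical to the paper's: both directions use the same arguments (continuity on a dense subset for $(\Leftarrow)$, and the complete quotient map $q \colon Y \to Y/\overline{e[X]}$ compared against the zero morphism for $(\Rightarrow)$). The only cosmetic difference is that you phrase the forward direction contrapositively, which requires your extra (correct) remark that $q \neq 0$ when $\overline{e[X]}$ is proper, whereas the paper argues directly from $q \circ e = 0_Y \circ e$ and epi-ness to $q = 0_Y$.
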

\begin{proof}
  $(\Rightarrow)$ Consider the complete quotient map $q \colon Y \to Y/\overline{e[X]}$
  and the constant zero map $0_Y \colon Y \to Y/\overline{e[X]}$. We have that
  $q \circ e = 0_X = 0_Y \circ e$, where $0_X \colon X \to Y/\overline{e[X]}$
  is the constant zero map. Since $e$ is an epimorphism, then $q = 0_Y$ and
  therefore $\overline{e[X]} = Y$ and thus $e[X]$ is indeed dense in $Y.$

  $(\Leftarrow)$ Let $Z$ be an operator space and let $f,g \colon Y \to Z$ be
  two complete contractions.  If $f \circ e = g \circ e$, then $f$ and $g$
  coincide on $e[X]$ which is dense in $Y$. Since both functions are
  continuous (and $Y$ is Hausdorff), it follows that $f = g.$
\end{proof}

Next, we classify special kinds of monomorphisms and epimorphisms that behave
better in locally presentable categories compared to generic monomorphisms and
epimorphisms. The situation (and proofs) are again completely analogous to the
ones in $\Ban.$

\begin{proposition}
  \label{prop:strong-mono}
  Let $m \colon X \to Y$ be a complete contraction between operator spaces $X$ and $Y$. The following are equivalent:
  \begin{enumerate}
    \item[(1)] $m$ is a regular monomorphism;
    \item[(2)] $m$ is a strong monomorphism;
    \item[(3)] $m$ is an extremal monomorphism;
    \item[(4)] $m$ is a complete isometry.
  \end{enumerate}
\end{proposition}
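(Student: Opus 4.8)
The plan is to prove the cycle $(1) \Rightarrow (2) \Rightarrow (3) \Rightarrow (4) \Rightarrow (1)$. The implications $(1) \Rightarrow (2) \Rightarrow (3)$ are exactly the general categorical facts recalled just before the statement (regular $\Rightarrow$ strong $\Rightarrow$ extremal in any category), so they require no extra work here. The real content lies in the two ``analytic'' implications $(4) \Rightarrow (1)$ and $(3) \Rightarrow (4)$, which together close the loop and force all four notions to coincide.

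For $(4) \Rightarrow (1)$, I would realise a complete isometry as a concrete equaliser. If $m$ is a complete isometry then it is in particular injective, and since $m$ is an isometry at level $1$ and $X$ is complete, the image $m[X]$ is a \emph{closed} subspace of $Y$. I would then form the quotient operator space $Y/m[X]$ with its quotient map $q \colon Y \to Y/m[X]$ (a complete contraction) and compute the equaliser of the pair $q, 0 \colon Y \to Y/m[X]$. By the quotient construction, $q(y) = 0$ holds iff $y \in m[X]$, so by Proposition \ref{prop:equalisers} this equaliser is exactly $m[X]$ equipped with its completely isometric subspace inclusion. Since $m$ corestricts to a completely isometric isomorphism $X \cong m[X]$, the map $m$ equals this equaliser precomposed with an isomorphism, which is again an equaliser of the same pair; hence $m$ is a regular monomorphism.

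For $(3) \Rightarrow (4)$, I would exploit extremality via an epi-factorisation through the closure of the image. Given an extremal monomorphism $m$, set $W \eqdef \overline{m[X]} \subseteq Y$, which is a closed subspace and hence an operator space with the subspace OSS, and factor $m = j \circ e$, where $j \colon W \hookrightarrow Y$ is the completely isometric inclusion and $e \colon X \to W$ is the corestriction of $m$. By construction $e$ has dense image, so $e$ is an epimorphism by Proposition \ref{prop:epimorphism}. Extremality of $m$ then forces $e$ to be an isomorphism in $\OS$, which is to say a completely isometric isomorphism onto $W$. Consequently $m = j \circ e$ is a composite of complete isometries, and therefore a complete isometry itself.

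The main obstacle I anticipate is getting the factorisation in $(3) \Rightarrow (4)$ set up correctly: one must pass to the \emph{closure} $\overline{m[X]}$ rather than to $m[X]$ carrying a norm transported along $m$, since the latter need not be complete and hence need not be an object of $\OS$. One must also invoke that the isomorphisms of $\OS$ are precisely the completely isometric isomorphisms, so that the epimorphism $e$ promoted to an isomorphism by extremality actually yields the norm equalities at \emph{every} matrix level. The remaining verifications --- the closedness of $m[X]$ in the first implication, the identification of the equaliser with $m[X]$, and the density of $e[X]$ --- are routine given the earlier propositions.
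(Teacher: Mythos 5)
Your proposal is correct and follows essentially the same route as the paper: both close the cycle via the general implications $(1) \Rightarrow (2) \Rightarrow (3)$, prove $(3) \Rightarrow (4)$ by factoring $m$ through the closure $\overline{m[X]}$ and applying Proposition \ref{prop:epimorphism} plus extremality, and prove $(4) \Rightarrow (1)$ by exhibiting $m$ as the equaliser of the quotient map $q \colon Y \to Y/m[X]$ and the zero map via Proposition \ref{prop:equalisers}. Your explicit remarks on the closedness of $m[X]$ (needed to form the quotient) and on precomposing an equaliser with an isomorphism are details the paper leaves implicit, and they are handled correctly.
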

\begin{proof}
  The implications $(1) \Rightarrow (2) \Rightarrow (3)$ hold in every category, so it suffices to prove
  $(3) \Rightarrow (4)$ and $(4) \Rightarrow (1).$

  $(3) \Rightarrow (4).$
  Let $e \colon X \to \overline{m[X]}$ be the corestriction of $m$, i.e. $e(x) \eqdef m(x).$
  Let $i \colon \overline{m[X]} \to Y$ be the subspace inclusion. By Proposition \ref{prop:epimorphism}, $e$ is an epimorphism
  and we have that $m = i \circ e$. Since $m$ is an extremal monomorphism, it follows that $e$
  is a categorical isomorphism, i.e. a completely isometric isomorphism of operator spaces.
  Since $i$ is a complete isometry, then so is $m = i \circ e$.

  $(4) \Rightarrow (1).$
  Consider the complete quotient map $q \colon Y \to Y/m[X]$ and the zero map
  $0_Y \colon Y \to Y/m[X].$ By using Proposition \ref{prop:equalisers}, we see
  that the equaliser of $q$ and $0_Y$ is given by the subspace inclusion $m[X] \subseteq Y$. But
  $X \cong m[X]$ completely isometrically, therefore $m \colon X \to Y$ is also the equaliser
  of $q$ and $0_Y.$
\end{proof}

\begin{proposition}
  \label{prop:strong-epi}
  Let $e \colon X \to Y$ be a complete contraction between operator spaces $X$ and $Y$. The following are equivalent:
  \begin{enumerate}
    \item[(1)] $e$ is a regular epimorphism;
    \item[(2)] $e$ is a strong epimorphism;
    \item[(3)] $e$ is an extremal epimorphism;
    \item[(4)] $e$ is a complete quotient map.
  \end{enumerate}
\end{proposition}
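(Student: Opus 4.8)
The plan is to dualise the proof of Proposition~\ref{prop:strong-mono} step by step. The implications $(1) \Rightarrow (2) \Rightarrow (3)$ require no argument specific to operator spaces: as recorded before the statement, every regular epimorphism is strong and every strong epimorphism is extremal, in any category. Hence it suffices to prove $(3) \Rightarrow (4)$ and $(4) \Rightarrow (1)$, and both will go through the factorisation of $e$ by its kernel.

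For $(3) \Rightarrow (4)$, I would set $N \eqdef \ker(e)$, which is a closed subspace of $X$ and hence an operator space, and let $\iota, 0 \colon N \to X$ be the (completely isometric) subspace inclusion and the zero map. By Proposition~\ref{prop:coequalisers} their coequaliser is the canonical complete quotient map $q \colon X \to X/N$. Since $e \circ \iota = 0 = e \circ 0$, the couniversal property of $q$ yields a complete contraction $m \colon X/N \to Y$ with $e = m \circ q$. The map $m$ is injective, so it is a monomorphism by Proposition~\ref{prop:monomorphisms}; since $e$ is extremal and factors as $e = m \circ q$ through the monomorphism $m$, the map $m$ must be a categorical isomorphism, i.e.\ a completely isometric isomorphism. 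Being the composite of the complete quotient map $q$ with a completely isometric isomorphism, $e$ is then itself a complete quotient map.

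For $(4) \Rightarrow (1)$, I would reuse the same factorisation $e = m \circ q$, where now only the hypothesis that $e$ is a complete quotient map is available, and show directly that $m$ is a completely isometric isomorphism; then $e$ is isomorphic to the coequaliser $q$ and is therefore a regular epimorphism. The map $m$ is injective by construction, and for each $n$ the induced map $m_n$ carries the open unit ball of $M_n(X/N) = M_n(X)/M_n(N)$ (using the quotient OSS of \cite[Proposition 3.1.1]{effros-ruan}) onto that of $M_n(Y)$: since $m_n \circ q_n = e_n$ and both $q_n$ and $e_n$ are metric surjections, we obtain $m_n(\Ball(M_n(X/N))) = m_n(q_n(\Ball(M_n(X)))) = e_n(\Ball(M_n(X))) = \Ball(M_n(Y))$. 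An injective metric surjection is an isometric isomorphism, so each $m_n$ is one, whence $m$ is a completely isometric isomorphism, as required.

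The only genuinely operator-space-flavoured step, and the one I expect to demand the most care, is the verification in $(4) \Rightarrow (1)$ that the induced map $m$ is a \emph{complete} isometry. The subtlety is that the quotient operator space structure satisfies $M_n(X/N) = M_n(X)/M_n(N)$, so the open-ball computation must be carried out simultaneously at every matrix level $n$ rather than only at $n = 1$; this is precisely where the defining open-ball condition of a complete quotient map enters, and it is what distinguishes the complete quotient maps from mere contractive surjections with dense image (the epimorphisms of Proposition~\ref{prop:epimorphism}). Everything else is a formal transposition of the monomorphism argument.
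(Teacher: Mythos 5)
Your proof is correct, and in the direction $(3) \Rightarrow (4)$ it coincides with the paper's: both factor $e$ as an injective complete contraction $m$ (the paper's $\hat e$) after the complete quotient $q \colon X \to X/\ker(e)$, note that $m$ is a monomorphism by Proposition \ref{prop:monomorphisms}, and invoke extremality to conclude $m$ is a completely isometric isomorphism. The genuine divergence is in $(4) \Rightarrow (1)$. The paper exhibits $q$ as the coequaliser of the \emph{kernel pair} $\pi_1, \pi_2 \colon E \to X$, where $E = \{(x_1,x_2) \mid e(x_1) = e(x_2)\} \subseteq X \oplus^\infty X$, checking $\Image(\pi_1 - \pi_2) = \ker(e)$ and applying Proposition \ref{prop:coequalisers}; and it merely \emph{asserts} that $\hat e$ is a completely isometric isomorphism when $e$ is a complete quotient map. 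You instead exhibit $q$ (hence $e$, transported along the isomorphism $m$) as the coequaliser of the simpler pair $\iota, 0 \colon \ker(e) \to X$, which is legitimate since $\Image(\iota - 0) = \ker(e)$ is closed, so Proposition \ref{prop:coequalisers} applies directly; this shortcut exploits the zero morphisms of $\OS$, whereas the kernel-pair route is the canonical one that works in any category with kernel pairs (regular epimorphisms are always coequalisers of their kernel pairs) and so generalises beyond pointed categories. Conversely, your proof is more self-contained on the analytic side: where the paper cites as known that $\hat e$ is a complete isometric isomorphism, you verify it, via the matrix-level ball computation $m_n(q_n(\Ball)) = e_n(\Ball)$ using $M_n(X/N) = M_n(X)/M_n(N)$, together with the fact that an injective metric surjection is an isometric isomorphism — you should add the one-line remark that each $m_n$ inherits injectivity entry-wise from $m$, but that is cosmetic. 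Your closing diagnosis is also accurate: the matrix-level openness is exactly where condition (4) does work that mere density of image (Proposition \ref{prop:epimorphism}) cannot.
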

\begin{proof}
  Again, the implications $(1) \Rightarrow (2) \Rightarrow (3)$ hold in every category, so it suffices to prove
  $(3) \Rightarrow (4)$ and $(4) \Rightarrow (1).$

  $(3) \Rightarrow (4).$ Let $q \colon X \to X/\ker(e)$ be the complete quotient map with the indicated type and let $\hat e \colon X/\ker(e) \to Y$ be
  the canonical complete contraction such that $e = \hat e \circ q.$ The map $\hat e$ is clearly injective and therefore a monomorphism. Since
  $e$ is an extremal epimorphism, it follows that $\hat e$ is a categorical isomorphism, i.e. a completely isometric isomorphism. Therefore
  $e = \hat e \circ q$ is a composition of two complete quotient maps and it is therefore a complete quotient map itself.

  $(4) \Rightarrow (1).$ Since $e$ is a complete quotient map, we know that the
  following diagram
  \cstikz{regular-epi-complete-quotient.tikz}
  commutes and the canonical map $\hat e$ from the preceding argument is a
  complete isometric isomorphism. Since $\hat e$ is a categorical isomorphism,
  it follows that $e$ is a regular epimorphism iff $q$ is one, so we prove the
  latter. Consider the operator space direct sum $X \oplus^\infty X$ and let
  $E \eqdef \{ (x_1, x_2) \ |\ e(x_1) = e(x_2) \}$ be the closed subspace of
  elements which agree under $e$. The projections $\pi_1, \pi_2 \colon E \to X$
  are complete contractions and are known as the \emph{kernel pair} of $e$, and
  we will show that $q$ is their coequaliser. It is easy to see that
  $\mathrm{Im}(\pi_1 - \pi_2) = \ker(e)$. Therefore, $\mathrm{Im}(\pi_1-\pi_2)$ is also
  closed. It now follows from  Proposition \ref{prop:coequalisers} that $q$ is exactly
  the aforementioned coequaliser.
\end{proof}

Next, we recall the following lemma that is crucial for proving that the set
$\{T_n \ |\ n \in \mathbb N\}$ is strongly generating for $\OS.$

\begin{lemma}
  \label{lem:trace-class-contraction}
  Let $X$ be an operator space and $[x_{ij}] \in M_n(X)$ be such that $\norm{[x_{ij}]} \leq 1.$ Then, the linear map
  \[ u \colon T_n \to X :: e_{ij} \mapsto x_{ij} \]
  is a complete contraction. Moreover, by making use of the identification $M_n(T_n) \cong \CB(M_n, M_n)$, we have that
  $u_n(\id_{M_n}) = [x_{ij}].$
\end{lemma}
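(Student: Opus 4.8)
The plan is to reduce both claims to the standard completely isometric identification
\[ M_n(X) \cong \CB(T_n, X), \]
valid for every operator space $X$, under which a matrix $[x_{ij}] \in M_n(X)$ is sent to the linear map $e_{ij} \mapsto x_{ij}$, with the $M_n(X)$-norm of $[x_{ij}]$ equal to the completely bounded norm of that map. This identification is the dual incarnation of the trace-class structure $T_n = M_n^*$ (with the matrix units $e_{ij}$ forming the basis of $T_n$ dual to the matrix-unit basis of $M_n$), and it may be cited from the standard references \cite{effros-ruan,blecher-merdy}. Granting it, the first claim is immediate: since $\norm{[x_{ij}]} \leq 1$, the corresponding map $u \colon T_n \to X$, $e_{ij} \mapsto x_{ij}$, satisfies $\norm{u}_{\mathrm{cb}} = \norm{[x_{ij}]} \leq 1$, so $u$ is a complete contraction.

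For the ``moreover'' part, I would first determine which element of $M_n(T_n)$ corresponds to $\id_{M_n}$ under the isomorphism $M_n(T_n) \cong \CB(M_n, M_n)$, which is the instance of $M_n(X^*) \cong \CB(X, M_n)$ (recalled in the preliminaries) for $X = M_n$, so that $X^* = M_n^* = T_n$. Under this correspondence a matrix $[\phi_{ij}] \in M_n(T_n)$, with entries $\phi_{ij} \in T_n = M_n^*$, is sent to the map $a \mapsto [\phi_{ij}(a)]_{ij}$. Requiring this to equal $\id_{M_n}$ forces $\phi_{ij}(a) = a_{ij}$ for every $a = [a_{kl}] \in M_n$, i.e. $\phi_{ij}$ is the functional extracting the $(i,j)$-entry; with the trace pairing defining $T_n = M_n^*$ this coordinate functional is exactly the matrix unit $e_{ij} \in T_n$. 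Hence $\id_{M_n}$ corresponds to the matrix $[e_{ij}]_{ij} \in M_n(T_n)$ whose $(i,j)$-entry is $e_{ij}$, and applying the entrywise amplification $u_n$ gives
\[ u_n(\id_{M_n}) = u_n\big([e_{ij}]_{ij}\big) = [u(e_{ij})]_{ij} = [x_{ij}], \]
as required.

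The step needing genuine care, which I expect to be the main obstacle, is bookkeeping the duality conventions so that the identifications line up consistently: namely that (i) $M_n(X) \cong \CB(T_n, X)$ sends $[x_{ij}]$ to $e_{ij} \mapsto x_{ij}$ rather than to a transposed or index-permuted variant, and (ii) the coordinate functional $a \mapsto a_{ij}$ really is $e_{ij}$ under the chosen pairing $T_n = M_n^*$. Both hinge on a single fixed convention for $\langle \cdot, \cdot \rangle \colon T_n \times M_n \to \C$ (e.g. $\langle S, a \rangle = \operatorname{tr}(S^{t} a)$, which yields $\langle e_{ij}, a\rangle = a_{ij}$); once this is fixed, everything is forced and the remaining computations are routine. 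If one prefers a self-contained argument rather than citing $M_n(X) \cong \CB(T_n, X)$, I would instead verify directly that $u_m \colon M_m(T_n) \to M_m(X)$ is contractive for each $m$, using the amplified identification $M_m(T_n) \cong \CB(M_n, M_m)$ together with the definition of the trace-class matrix norm; this route is more laborious but relies only on the duality $M_m(X^*) \cong \CB(X, M_m)$ already established in the preliminaries.
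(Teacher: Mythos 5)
Your proposal is correct and is essentially the paper's own proof: the paper simply cites \cite[pp.~101]{effros-ruan}, where the content is exactly the completely isometric identification $\CB(T_n,X)\cong M_n(X)$ (via trace duality, $u \mapsto [u(e_{ij})]$) that you invoke, with the ``moreover'' part read off from the same identification specialised to $X = M_n$. Your explicit bookkeeping of the pairing convention $\langle S,a\rangle = \operatorname{tr}(S^{t}a)$ is exactly the right point to be careful about (since entrywise transposition is not a complete isometry, the wrong pairing would put a transpose into the statement), and your resolution is consistent.
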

\begin{proof}
  That $u$ is a complete contraction is shown in {\cite[pp. 101]{effros-ruan}}.
  The second statement follows immediately (as a special case) using the
  arguments that follow on the same page.
\end{proof}

\begin{proposition}
  \label{prop:strong-generator}
  The set $\{ T_n \ |\ n \in \mathbb N \},$ consisting of the finite-dimensional trace class operator spaces, is strongly generating for the category $\OS.$
\end{proposition}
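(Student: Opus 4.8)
The plan is to show that the set $\{T_n \mid n \in \mathbb N\}$ is a strong generating set by verifying the two defining conditions directly, exploiting the universal property of $T_n$ encapsulated in Lemma \ref{lem:trace-class-contraction}. The key observation is that a complete contraction $u \colon T_n \to X$ is determined by, and corresponds bijectively to, an element $[x_{ij}] \in M_n(X)$ with $\norm{[x_{ij}]} \leq 1$; in other words, the maps out of $T_n$ in $\OS$ probe precisely the unit ball of $M_n(X)$. This is exactly the data needed to detect complete isometries and to separate parallel morphisms at every matrix level, which is why the single object $\mathbb C = T_1$ (which only sees the unit ball of $M_1(X) = X$) is insufficient but the whole family suffices.

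First I would verify that $\{T_n\}$ is a generating set. Suppose $f, g \colon X \to Y$ are distinct complete contractions, so there is some $x \in X$ with $f(x) \neq g(x)$. Rescaling, I may assume $\norm{x} \le 1$, and then by Lemma \ref{lem:trace-class-contraction} (with $n = 1$, i.e. the object $T_1 = \mathbb C$) there is a complete contraction $s \colon \mathbb C \to X$ with $s(1) = x$; since $(f \circ s)(1) = f(x) \neq g(x) = (g \circ s)(1)$, we get $f \circ s \neq g \circ s$. Thus even $T_1$ alone generates, and so does the larger family. (This merely re-establishes that $\mathbb C$ is a generator, as asserted in Example \ref{ex:strong-generators}.)

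The substantive part is the strong generation condition. By Proposition \ref{prop:monomorphisms} and Proposition \ref{prop:strong-mono}, a monomorphism $m \colon X \to Y$ is a proper monomorphism exactly when it is an injective complete contraction that fails to be a complete isometry. So I must produce, for each such $m$, some $n$ and a complete contraction $f \colon T_n \to Y$ that does not factor through $m$. Since $m$ is not a complete isometry, there is some level $n$ and some $[x_{ij}] \in M_n(X)$ with $\norm{[x_{ij}]}_n \neq \norm{m_n([x_{ij}])}_n$; as $m$ is a complete contraction the inequality must be strict, $\norm{m_n([x_{ij}])}_n < \norm{[x_{ij}]}_n$. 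After rescaling I may arrange $\norm{m_n([x_{ij}])}_n \le 1 < \norm{[x_{ij}]}_n$. Let $y_{ij} \eqdef m(x_{ij})$, so $\norm{[y_{ij}]}_n \le 1$, and apply Lemma \ref{lem:trace-class-contraction} to obtain a complete contraction $f \colon T_n \to Y$ with $f(e_{ij}) = y_{ij}$, equivalently $f_n(\id_{M_n}) = [y_{ij}]$. I claim $f$ does not factor through $m$: if $f = m \circ h$ for some complete contraction $h \colon T_n \to X$, then applying the $n$-th amplification and evaluating at $\id_{M_n}$ gives $[y_{ij}] = f_n(\id_{M_n}) = m_n(h_n(\id_{M_n}))$, so writing $[x'_{ij}] \eqdef h_n(\id_{M_n}) \in M_n(X)$ we have $m_n([x'_{ij}]) = [y_{ij}] = m_n([x_{ij}])$; injectivity of $m$ (hence of $m_n$) forces $[x'_{ij}] = [x_{ij}]$, whence $1 < \norm{[x_{ij}]}_n = \norm{h_n(\id_{M_n})}_n \le 1$ since $h$ is a complete contraction and $\norm{\id_{M_n}}_{M_n(T_n)} = 1$ — a contradiction.

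The main obstacle is the bookkeeping around the normalization and the identification $M_n(T_n) \cong \CB(M_n, M_n)$: I must be sure that $\norm{\id_{M_n}} = 1$ in this operator space structure on $T_n$ (so that a complete contraction $h$ keeps $\norm{h_n(\id_{M_n})}_n \le 1$), and that the correspondence of Lemma \ref{lem:trace-class-contraction} between elements of the unit ball of $M_n(Y)$ and complete contractions $T_n \to Y$ is used in the correct direction on both sides of the factorization. Once the identity element is confirmed to have norm one, the contradiction is immediate and the argument is clean; this is the step I would state carefully rather than rushing.
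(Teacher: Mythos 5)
There is a genuine gap in your reduction of the strong generation condition. You claim, citing Propositions \ref{prop:monomorphisms} and \ref{prop:strong-mono}, that a proper monomorphism is \emph{exactly} an injective complete contraction that fails to be a complete isometry. This is false: a proper monomorphism is a monomorphism that is not an isomorphism, and isomorphisms in $\OS$ are the \emph{surjective} complete isometries. The inclusion of a proper closed subspace $X \subsetneq Y$ is a complete isometry and a monomorphism, yet it is not an isomorphism, so it is a proper monomorphism that your argument never addresses -- your construction only produces a witness when some amplification $m_n$ strictly decreases a norm. The paper avoids this by using the correct dichotomy: an injective complete contraction is an isomorphism iff it is a complete quotient map (injectivity plus mapping open balls onto open balls forces a surjective complete isometry), so a proper monomorphism fails to be a complete quotient map, which yields some $n$ and some $y \in M_n(Y)$ with $\norm{y} < 1$ that is not the image under $m_n$ of any element of the open unit ball of $M_n(X)$. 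This single statement covers \emph{both} failure modes at once: non-isometry (preimages exist but have norm $> 1$ after normalising $y' = y/\norm{y}$) and non-surjectivity (no preimage exists at all, so the factorisation argument fails vacuously in your favour). From that point the paper's argument is the one you give: build $u \colon T_n \to Y$ with $u_n(t) = y'$ where $t$ corresponds to $\id_{M_n}$ under $M_n(T_n) \cong \CB(M_n,M_n)$ with $\norm{t} = 1$, and derive $\norm{g_n(t)} > 1$ for any factoring map $g$.

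The case you miss is easy to patch within your framework, and you should do so explicitly: if $m$ is a complete isometry that is not surjective, its image $m[X]$ is complete, hence closed and a proper closed subspace of $Y$; pick $y \in Y \setminus m[X]$, rescale to $\norm{y} \leq 1$, and use $T_1 = \C$ with the complete contraction $1 \mapsto y$, which cannot factor through $m$ since $y \notin m[X]$. With that case added (or, better, by replacing your characterisation of proper monomorphisms with the paper's ``not a complete quotient map'' formulation), your handling of the non-isometric case is correct and matches the paper's, including the point you rightly flag, namely that $\norm{\id_{M_n}}_{\mathrm{cb}} = 1$ so a complete contraction $h \colon T_n \to X$ satisfies $\norm{h_n(t)} \leq 1$.
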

\begin{proof}
  It is easy to see that the complex numbers $\C$ is a generating object.
  Indeed, if $f,g \colon X \to Y$ are two distinct complete contractions, then
  there must exist an element $x \in B_{\leq 1}(X)$ in the unit ball of $X$
  such that $f(x) \neq g(x)$, otherwise a simple normalisation argument yields
  a contradiction. Then, the map $1 \mapsto x : \C \to X$ is a complete
  contraction and this shows that $\C$ is a generating object. Since
  $\C \cong T_1$ as operator spaces, it follows that $\{ T_n \ |\ n \in \mathbb N \}$ is
  also a generating set.

  To show that this family is strongly generating, let $f \colon X \to Y$ be a
  proper monomorphism between two operator spaces. Since $f$ is a monomorphism,
  it is a completely contractive injection. Since it is a \emph{proper}
  monomorphism, it is not a complete quotient map (equivalently, an extremal
  epimorphism).
  Therefore, there exists $n \in \mathbb N$ and $y \in M_n(Y)$ with $\norm{y} < 1$, such that $y \neq f_n(x)$,
  for any $x \in M_n(X)$ with $\norm{x}<1$. Therefore, if $y = f_n(x)$ for some $x \in M_n(X)$, it must be the
  case that $\norm{y} < 1 \leq \norm{x}$. The element $y$ is clearly not $0$, so after normalising $y' \eqdef \frac{y}{\norm{y}},$
  we see that if $f_n(x) = y'$ for some $x \in M_n(X)$, then it must be the case that $1 < \norm{x}.$

  We can now apply Lemma \ref{lem:trace-class-contraction} to $y'$ to
  find a complete contraction $u \colon T_n \to
  Y$ such that $u_n(t) = y'$ for some $t \in M_n(T_n) \cong \CB(M_n, M_n)$
  with $\norm{t} = 1.$ Note that $t$ is determined by $\id_{M_n} \in \CB(M_n, M_n)$
  and the isomorphism $M_n(T_n) \cong \CB(M_n, M_n)$. The contraction $u_n$
  cannot factor via a contraction through $f_n$ due to the argument in the
  preceding paragraph. In other words, if the following diagram of
  \emph{bounded} maps
  \cstikz{factor-bounded.tikz}
  commutes, then it must be the case that $\norm{g(t)} > 1 = \norm t,$ so
  $g$ is not a contraction. Therefore $u$ cannot factor via a complete
  contraction through $f.$ 
\end{proof}

\begin{corollary}
  The operator space $\oplus^1_{n \in \mathbb N} T_n  $ is a strong generator for $\OS$.
\end{corollary}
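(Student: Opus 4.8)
The plan is to invoke the general categorical principle that a coproduct of a strong generating set is itself a strong generator, applied to the strong generating set $\{T_n \mid n \in \mathbb N\}$ supplied by Proposition \ref{prop:strong-generator}. Write $G \eqdef \bigoplus^1_{n \in \mathbb N} T_n$, which by Proposition \ref{prop:coproducts}(e) is precisely the categorical coproduct of the family $(T_n)_{n \in \mathbb N}$ in $\OS$, equipped with its canonical injections $\iota_n \colon T_n \to G$. The key device is that any single morphism $h \colon T_n \to A$ lifts to a morphism out of $G$: since $\OS$ has a zero object, we may form the family $(h_m)_{m \in \mathbb N}$ with $h_n \eqdef h$ and $h_m \eqdef 0$ for $m \neq n$, and the universal property of the coproduct then yields a unique complete contraction $\bar h \colon G \to A$ with $\bar h \circ \iota_n = h$.

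First I would verify that $G$ is a generator. Given distinct parallel complete contractions $f, g \colon X \to Y$, Proposition \ref{prop:strong-generator} supplies some $n \in \mathbb N$ and a morphism $s \colon T_n \to X$ with $f \circ s \neq g \circ s$. Lifting $s$ to $\bar s \colon G \to X$ as above gives $f \circ \bar s \circ \iota_n = f \circ s \neq g \circ s = g \circ \bar s \circ \iota_n$, whence $f \circ \bar s \neq g \circ \bar s$, so $G$ is a generator.

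Next I would establish strength. Let $m \colon A \to B$ be a proper monomorphism. Because $\{T_n \mid n \in \mathbb N\}$ is a strong generating set, there exist some $n$ and a morphism $h \colon T_n \to B$ that does not factorise through $m$. Lifting to $\bar h \colon G \to B$, suppose for contradiction that $\bar h = m \circ k$ for some $k \colon G \to A$. Then $h = \bar h \circ \iota_n = m \circ (k \circ \iota_n)$ would exhibit a factorisation of $h$ through $m$, contradicting the choice of $h$. Hence $\bar h$ does not factor through $m$, so $\{G\}$ is a strong generating family and $G$ is a strong generator.

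I expect no genuine obstacle, since the argument is entirely formal once the two inputs are in place. The only points requiring care are that $\bigoplus^1_{n\in\mathbb N} T_n$ is the honest categorical coproduct in $\OS$ (guaranteed by Proposition \ref{prop:coproducts}) and that zero morphisms are available to fill the unused coproduct components (guaranteed by the existence of the zero object in $\OS$), both of which have already been established.
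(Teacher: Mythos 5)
Your proof is correct, but it takes a genuinely different route from the paper's. The paper works with the canonical projections $\pi_k \colon \oplus^1_{i \in \mathbb N} T_i \to T_k$, which are complete quotient maps by Proposition \ref{prop:coproducts}(c) and hence strong epimorphisms by Proposition \ref{prop:strong-epi}: given $f \colon T_k \to D$ not factoring through the proper monomorphism $m$, it considers $f \circ \pi_k$ and argues that a factorisation $f \circ \pi_k = m \circ h$ would, by the diagonal lifting property of strong epimorphisms against monomorphisms, produce the forbidden factorisation of $f$. You instead work with the injections $\iota_n$ and zero-padding via the universal property of the coproduct, precomposing with $\iota_n$ to derive the contradiction. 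Amusingly, the two arguments manipulate the very same morphism: your $\bar h$ satisfies $\bar h \circ \iota_m = 0$ for $m \neq n$ and $\bar h \circ \iota_n = h$, so by uniqueness of the mediating map $\bar h = h \circ \pi_n$ — only the justification of non-factorisation differs. Your version is more elementary and more general: it needs only the coproduct's universal property (Proposition \ref{prop:coproducts}(e)) and the zero morphisms of $\OS$, so it proves the abstract statement that in any category with zero morphisms, a coproduct of a strong generating set is a strong generator, without invoking Propositions \ref{prop:coproducts}(c) or \ref{prop:strong-epi} at all. What the paper's heavier route buys is reusability in the very next step: Corollary \ref{cor:trace-class-generator} transports strong generation along a complete quotient map $q \colon T(\ell_2) \to \oplus^1_{n \in \mathbb N} T_n$, where $T(\ell_2)$ carries no coproduct structure and no injections are available, so there the strong-epimorphism argument is essential and your injection trick would not apply.
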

\begin{proof}
  Let $\pi_k \colon \oplus^1_{i \in \mathbb N} T_i \to T_k $ be the canonical complete quotient map (see Proposition \ref{prop:coproducts}).
  From Proposition \ref{prop:strong-generator}, it follows that if $f, g \colon
  C \to D$ are two distinct complete contractions, then we can find $k \in \mathbb N$ and a
  morphism $t \colon T_k \to C$ such that $f \circ t \neq g \circ t$ and
  therefore $f \circ t \circ \pi_k \neq g \circ t \circ \pi_k$ using the fact
  that $\pi_k$ is a complete quotient map, so an epimorphism. Therefore, $\oplus^1_{n \in \mathbb N} T_n  $ is a generator.
  To prove that it is strong, let $m \colon C \to D$ be a proper monomorphism.
  From Proposition \ref{prop:strong-generator}, we can find $k \in \mathbb N$
  and $f \colon T_k \to D$ such that $f$ does not factorise through $m$, i.e.
  there exists no complete contraction $d \colon T_k \to C$ with the property that
  $f = m \circ d$. Since $\pi_k$ is a strong epimorphism (Proposition \ref{prop:strong-epi}), it follows that $f \circ \pi_k$ cannot factorise
  through $m$, because otherwise we would be able to construct the
  aforementioned map $d$ by definition of strong epimorphism. 
\end{proof}

\begin{corollary}
  \label{cor:trace-class-generator}
  The operator space $T(\ell_2)$ is a strong generator for $\OS.$
\end{corollary}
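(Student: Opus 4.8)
The plan is to leverage the strong generating set $\{T_n \mid n \in \mathbb N\}$ established in Proposition \ref{prop:strong-generator} by exhibiting each $T_n$ as a retract of $T(\ell_2)$ in $\OS$. Concretely, I would first record the following purely categorical observation: if $\mathcal S$ is a strong generating set in a category and $G$ is an object such that every $S \in \mathcal S$ admits morphisms $\iota_S \colon S \to G$ and $p_S \colon G \to S$ with $p_S \circ \iota_S = \id_S$, then $\{G\}$ is a strong generating family. Granting this, it suffices to produce, for each $n$, a pair of complete contractions $\iota_n \colon T_n \to T(\ell_2)$ and $p_n \colon T(\ell_2) \to T_n$ with $p_n \circ \iota_n = \id_{T_n}$.

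To build the retraction, fix an orthonormal basis $(e_i)_{i \in \mathbb N}$ of $\ell_2$, let $H_n = \mathrm{span}(e_1,\dots,e_n) \cong \mathbb C^n$, and let $P_n$ be the orthogonal projection onto $H_n$. I would take $\iota_n$ to be the ``extend by zero'' embedding that sends a trace-class operator on $H_n$ to the operator on $\ell_2$ acting as it on $H_n$ and as $0$ on $H_n^\perp$; this is a complete isometry of $T_n$ onto the top-left corner of $T(\ell_2)$. I would take $p_n$ to be the compression $x \mapsto P_n x P_n$, viewed as a map into $T(H_n) = T_n$; this is a complete quotient map, in particular a complete contraction, and one computes immediately that $p_n \circ \iota_n = \id_{T_n}$. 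The verification that extension-by-zero and compression are complete contractions is the only genuinely operator-space-theoretic input; it is standard and can be seen either directly from the behaviour of the trace norm (and its matricial versions) under these operations, or by duality, recognising $\iota_n$ and $p_n$ as the preduals of the corner inclusion $M_n \hookrightarrow B(\ell_2)$ and of the compression $B(\ell_2) \to M_n$, respectively.

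With the retraction in hand, the two generating conditions follow formally. Note first that $p_n \circ \iota_n = \id_{T_n}$ forces $p_n$ to be a (split) epimorphism. For generation, given distinct parallel $f, g \colon A \to B$, Proposition \ref{prop:strong-generator} yields some $k$ and $t \colon T_k \to A$ with $f \circ t \neq g \circ t$; then $s \eqdef t \circ p_k \colon T(\ell_2) \to A$ satisfies $f \circ s \neq g \circ s$, since if $(f \circ t) \circ p_k = (g \circ t) \circ p_k$ we could cancel the epimorphism $p_k$. For strength, given a proper monomorphism $m \colon A \to B$, Proposition \ref{prop:strong-generator} yields some $k$ and $h \colon T_k \to B$ not factoring through $m$; then $h \circ p_k \colon T(\ell_2) \to B$ cannot factor through $m$ either, for a factorisation $h \circ p_k = m \circ d$ would give, upon precomposition with $\iota_k$, the forbidden factorisation $h = h \circ p_k \circ \iota_k = m \circ (d \circ \iota_k)$. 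Thus $T(\ell_2)$ is a strong generator. The main obstacle is the construction of the retraction; once the compression and embedding maps are known to be complete contractions with $p_n \circ \iota_n = \id_{T_n}$, the remaining argument is purely diagrammatic.
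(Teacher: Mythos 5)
Your proof is correct, but it takes a genuinely different route from the paper's. The paper deduces the corollary from the preceding one (that $\oplus^1_{n \in \mathbb N} T_n$ is a strong generator): it invokes a complete quotient map $q \colon T(\ell_2) \to \oplus^1_{n \in \mathbb N} T_n$ cited from Pisier's book, so that each composite $\pi_k \circ q \colon T(\ell_2) \to T_k$ is a complete quotient map, hence a strong epimorphism (Proposition \ref{prop:strong-epi}), and the transfer of strong generation then uses the diagonal fill-in property of strong epimorphisms against the proper monomorphism $m$. You instead exhibit each $T_k$ as a retract of $T(\ell_2)$ via extension-by-zero $\iota_k$ and compression $p_k$ (correctly identified as the preduals of the compression $B(\ell_2) \to M_k$ and of the corner embedding $M_k \to B(\ell_2)$, respectively, so both are complete contractions with $p_k \circ \iota_k = \id_{T_k}$); with a section in hand, your categorical transfer is more elementary: the forbidden factorisation $h = m \circ (d \circ \iota_k)$ is produced simply by precomposing with $\iota_k$, with no appeal to the strong-epimorphism lifting property and no need for the intermediate coproduct object $\oplus^1_{n \in \mathbb N} T_n$. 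Each approach rests on one standard operator-space fact -- the paper on the existence of the complete quotient map onto the $\ell^1$-sum, yours on the complete contractivity of compression and extension-by-zero -- so neither is harder analytically. What yours buys is self-containedness and a reusable categorical principle (a single object admitting every member of a strong generating set as a retract is itself a strong generator, split epimorphisms sufficing); what the paper's buys is brevity, since the corollary on $\oplus^1_{n \in \mathbb N} T_n$ and the strong-epimorphism machinery were already in place.
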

\begin{proof}
  We can construct a complete quotient map $q \colon T(\ell_2) \to \oplus^1_{n
  \in \mathbb N} T_n $ (see \cite[pp. 68]{pisier}). The proof follows from the
  preceding corollary by using essentially the same arguments. 
\end{proof}

Now that we have identified several strong generators, it can be useful to
understand why the situation is different compared to $\Ban$, where the complex
numbers $\C$ are already a strong generator. The reason is similar to the
reason why the terminal object $1$ is a non-strong generator in $\Top.$ There
exists an adjoint situation
  \cstikz{triple-adjunction-top.tikz}
where $D$ is the functor that assigns the discrete topology to a set and $I$ is the functor that assigns the indiscrete one.
Then, if we consider the following diagram in $\Top$,
  \cstikz{non-strong-top.tikz}
we see that the identity map $\id \colon D(\mathbb N) \to I(\mathbb N)$ is a proper monomorphism and it is obvious that for any map $f \colon 1 \to I(\mathbb N),$ the above diagram commutes,
so $1$ is not a strong generator. The proof in $\OS$ can be shown in a similar way.

Let $U \colon \OS \to \Ban$ be the obvious forgetful functor.
Let $\Max \colon \Ban \to \OS$ be the maximal quantisation functor, i.e. the
functor that assigns to a Banach space $X$ the biggest possible OSS in the
sense that $(U \circ \Max)(X) = X$ and such that for each $n \geq 2,$
the norm $\norm{\cdot}_n$ on $M_n(X)$ is the largest norm among all such OSS norms (see
\cite[\S 3.3]{effros-ruan} for more information). We write $\Min \colon \Ban
\to \OS$ for the minimal quantisation functor, which has the property that
$(U \circ \Min)(X) = X$ and which assigns the smallest possible OSS
that is compatible with $X$ (again, see \cite[\S 3.3]{effros-ruan} for more
information). It has already been observed by Yemon Choi that this gives us
an adjoint situation similar to the one in $\Top$ above
\cite{choi-adjunctions}, 
\begin{equation}
  \label{eq:triple-adjunctions}
  \stikz{triple-adjunction-os.tikz}
\end{equation}
see also \cite{pestov-adjunction} where a similar adjunction is used.

\begin{proposition}
  The operator space $\mathbb C$ is a non-strong generator for $\OS$.
\end{proposition}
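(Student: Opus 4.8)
The plan is to produce a single proper monomorphism through which every morphism out of $\C$ factors; by the definition of a strong generator this is exactly what rules $\C$ out. That $\C$ is at least a generator has already been observed in the proof of Proposition~\ref{prop:strong-generator}, so only the failure of strongness remains. Guided by the $\Top$ picture above, where $\id \colon D(\mathbb N) \to I(\mathbb N)$ does the job, I would take the identity map on underlying vector spaces $m \defeq \id \colon \Max X \to \Min X$ for a suitably chosen Banach space $X$.

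First I would fix a Banach space $X$ with $\dim X \geq 2$, for which the minimal and maximal quantisations are known to differ (see \cite[\S 3.3]{effros-ruan}). Because the minimal OSS assigns the smallest admissible matrix norms and the maximal OSS the largest, we have $\norm{\cdot}_{M_n(\Min X)} \leq \norm{\cdot}_{M_n(\Max X)}$ for every $n$, so $m$ is a complete contraction; it is injective, hence a monomorphism by Proposition~\ref{prop:monomorphisms}. Moreover $m$ is an isomorphism in $\OS$ iff it is a complete isometry, i.e. iff the two operator space structures coincide, which fails once $\dim X \geq 2$. Hence $m$ is a proper monomorphism.

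The crux is then to show that every complete contraction $f \colon \C \to \Min X$ factors through $m$. Here I would use that $\C$ has a unique operator space structure, so $\C = \Max(\C)$, together with the left half $\Max \adj U$ of the triple adjunction \eqref{eq:triple-adjunctions}. That adjunction gives, for every operator space $Y$, a natural bijection between complete contractions $\C \to Y$ and contractions $\C \to UY$ in $\Ban$, i.e. between such morphisms and the points of the closed unit ball $\Ball(UY)$ under $f \mapsto f(1)$. Applied to $Y = \Min X$ and $Y = \Max X$, and using $U\Min X = X = U\Max X$, this shows that in both cases the complete contractions out of $\C$ are parameterised by the same set $\Ball(X)$. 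Thus, given $f \colon \C \to \Min X$ with $f(1) = x \in \Ball(X)$, the map $g \colon \C \to \Max X$ with $g(1) = x$ is again a complete contraction, and since $m$ is the identity on underlying vectors, $m \circ g = f$. Every morphism $\C \to \Min X$ therefore factors through the proper monomorphism $m$, so $\C$ is not strong.

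I expect the main difficulty to be conceptual bookkeeping rather than any hard estimate: one must orient $m$ correctly, from the left-adjoint (``discrete-like'') structure $\Max X$ to the right-adjoint (``indiscrete-like'') structure $\Min X$, in exact parallel with the $\Top$ example, and one must invoke the correct half $\Max \adj U$ of the triple adjunction to collapse maps out of $\C$ to their level-one data. The only external input is the standard fact that $\Min X$ and $\Max X$ differ for $\dim X \geq 2$, which secures properness of $m$.
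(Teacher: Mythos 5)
Your construction is the same as the paper's: you take the proper monomorphism $m = \id \colon \Max(X) \to \Min(X)$ and observe that, since $\C = \Max(\C)$, every complete contraction $\C \to \Min(X)$ has the same underlying contraction as some complete contraction $\C \to \Max(X)$, hence factors through $m$. That factorisation argument, via the adjunction $\Max \adj U$ and $U(\Min(X)) = X = U(\Max(X))$, is exactly the paper's and is correct.

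There is, however, one genuinely false claim in your write-up: that the two quantisations differ "once $\dim X \geq 2$", which you use to conclude that $m$ is a \emph{proper} monomorphism. This is not true for every space of dimension at least two. Take $X = \ell_1^2$, the two-dimensional $\ell_1$-space. A contraction $u \colon \ell_1^2 \to B(H)$ is just a pair of contractions $T_1 = u(e_1)$, $T_2 = u(e_2)$; by Halmos--Sz.-Nagy dilation we may compress from unitaries $U_1, U_2$ on a larger space, and for any matrices $A, B \in M_n$ one has
\[
  \norm{A \otimes U_1 + B \otimes U_2} = \norm{A \otimes I + B \otimes U_1^*U_2}
  = \sup_{z \in \sigma(U_1^*U_2)} \norm{A + zB}
  \leq \sup_{|a|=|b|=1} \norm{aA + bB},
\]
and the right-hand side is precisely the $\Min$-norm of $A \otimes e_1 + B \otimes e_2$ in $M_n(\Min(\ell_1^2))$. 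Hence every contraction out of $\ell_1^2$ is completely contractive for the minimal structure, i.e. $\Min(\ell_1^2) = \Max(\ell_1^2)$ (and dually $\Min(\ell_\infty^2) = \Max(\ell_\infty^2)$). So with this choice of $X$ your map $m$ is an isomorphism and the argument collapses. The repair is easy, since you only need \emph{one} space where the structures differ: the paper takes $X = B(\ell_2)$; Paulsen's theorem gives $\Min(X) \neq \Max(X)$ whenever $\dim X \geq 5$; and the two-dimensional Hilbert space also works, since the row and column structures $R_2 \neq C_2$ both lie between $\Min(\ell_2^2)$ and $\Max(\ell_2^2)$. But properness must be justified by exhibiting such a specific space, not by the dimension count you gave.
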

\begin{proof}
  We already proved that $\C$ is a generator in Proposition
  \ref{prop:strong-generator}, so we just have to show that it is not strong.
  Consider   the following diagram in $\OS,$
  \cstikz{non-strong-os.tikz}
  where $X = B(\ell_2)$ viewed as a Banach space. In fact, for $X$ we can choose any Banach space for which $\Max(X) \not \cong \Min(X)$ as operator spaces.
  Then, the identity map $\id \colon \Max(X) \to \Min(X)$ is a completely contractive injection which is not completely isometric, so it is a proper monomorphism.
  Also, $f \colon \mathbb C \to \Min(X)$ is a contraction iff it is a complete contraction and
  $f \colon \mathbb C \to \Max(X)$ is a complete contraction iff it is a contraction, because $\mathbb C = \Max(\mathbb C).$
  Since $U(\Min(X)) = U(\Max(X)) = X$, it follows that the diagram above is
  well-defined in $\OS$ and it clearly commutes for any choice of $f \colon \C
  \to \Min(X)$. Therefore $\C$ cannot be a strong generator.
\end{proof}

\subsection{Countably-presentable objects in $\OS$}
\label{sub:presentable}

Before we classify the countably-presentable objects in $\OS$, let us first show that the finitely-presentable ones are trivial.
The argument is completely analogous to the one we gave for $\Ban$ in Section \ref{sec:banach}, but we give the proof in more detail.

\begin{proposition}
  The only finitely-presentable object in $\OS$ is $0.$
\end{proposition}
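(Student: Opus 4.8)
The plan is to split the statement into two halves. First I would dispose of the easy direction: $0$ is (trivially) finitely-presentable. Since $0$ is the initial object, the hom-functor $\OS(0,-)\colon\OS\to\Set$ is naturally isomorphic to the constant functor at the singleton, and a directed colimit of singletons in $\Set$ is again a singleton; hence $\OS(0,-)$ preserves all directed colimits and $0$ is finitely-presentable. The content of the proposition is the converse, namely that every non-zero operator space $X$ fails to be finitely-presentable. I would prove this by producing, for an arbitrary such $X$, a single directed (but not countably-directed) diagram together with a complete contraction into its colimit that does not factor through any stage, and then invoking Proposition \ref{prop:presentable-object}.

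For the diagram I would take the countable coproduct $C \eqdef \bigoplus^1_{n\in\mathbb N}\C$, realised as the directed colimit of its finite sub-coproducts $D_F \eqdef \bigoplus^1_{n\in F}\C$ indexed by the poset of finite subsets $F\subseteq\mathbb N$ ordered by inclusion. This poset is directed but not countably-directed (the family of singletons $\{n\}$ has no finite upper bound), which is precisely the regime in which finite presentability is expected to break down. By Proposition \ref{prop:coproducts}, the underlying Banach space of $C$ is $\ell^1$ and the cocone map $c_F\colon D_F\to C$ is the canonical (completely isometric) inclusion whose image is exactly the sequences supported on $F$; consequently any element of $\ell^1$ of infinite support is a ``new'' limit point lying outside $\mathrm{Im}(c_F)$ for every finite $F$.

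Next I would build the witnessing map. Fixing $0\neq x_0\in X$ with $\norm{x_0}=1$, Hahn--Banach yields $\phi\in X^*$ with $\norm\phi=1$ and $\phi(x_0)=1$; since $\CB(X,\C)=X^*$ isometrically, $\phi$ is a complete contraction $X\to\C$. Setting $v\eqdef(2^{-n})_{n\in\mathbb N}\in\ell^1$, which has infinite support and $\norm{v}_1=1$, Lemma \ref{lem:trace-class-contraction} (with $n=1$ and $\C\cong T_1$) shows that $1\mapsto v$ defines a complete contraction $\C\to C$. Composing, $f\eqdef(1\mapsto v)\circ\phi\colon X\to C$ is a complete contraction with $f(x_0)=v$. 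If $f$ factored as $f=c_F\circ g$ for some finite $F$ and some complete contraction $g$, then $v=f(x_0)=c_F(g(x_0))$ would be supported on $F$, contradicting that $v$ has infinite support. Hence condition (1) of Proposition \ref{prop:presentable-object} fails and $X$ is not finitely-presentable.

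The routine bookkeeping is that the finite sub-coproducts genuinely have the full coproduct as their directed colimit and that $\mathrm{Im}(c_F)$ is as claimed; this follows from Proposition \ref{prop:coproducts} together with the observation (see the Remark in Subsection \ref{sub:os-colimits}) that $U\colon\OS\to\Ban$ preserves colimits, so on underlying Banach spaces we recover the familiar completion of the finitely-supported sequences inside $\ell^1$. I do not expect a genuine obstacle here: the one point that might look delicate, the complete contractivity of $f$, is sidestepped entirely by factoring $f$ through the one-dimensional $\C$ and appealing to Lemma \ref{lem:trace-class-contraction} and to $\CB(X,\C)=X^*$, which reduce complete contractivity to ordinary contractivity.
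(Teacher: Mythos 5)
Your proof is correct, but it takes a genuinely different route from the paper's. The overall skeleton is the same: exhibit a directed diagram whose cocone images do not exhaust the colimit, then use Hahn--Banach to send a non-zero $X$ onto a point outside their union, contradicting condition (1) of Proposition \ref{prop:presentable-object}. The instantiations, however, differ in both key ingredients. The paper's diagram is the chain $D_i \subset \ell_\infty$ of sequences vanishing beyond index $i$, whose colimit is the \emph{minimal} operator space $\Min(c_0)$, the closure of $c_{00}$ in $\ell_\infty$; there, complete contractivity of the witness is automatic because any contraction into a minimal operator space is completely contractive (this is where minimal quantisation and the adjunctions \eqref{eq:triple-adjunctions} do the work). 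Your diagram is instead the system of finite sub-coproducts of $\bigoplus^1_{n\in\mathbb N}\C$ (a space of maximal, not minimal, type), identified as a directed colimit by a purely categorical argument resting on Proposition \ref{prop:coproducts} and cocompleteness; since minimality is unavailable for this colimit, you secure complete contractivity by factoring the witness through the one-dimensional space $\C$, using $\CB(X,\C)=X^*$ on one side and Lemma \ref{lem:trace-class-contraction} with $T_1\cong\C$ on the other. Your version buys self-containedness: it leans only on results proved in the paper plus Hahn--Banach, and the factor-through-$\C$ trick makes the complete contractivity of the witness independent of whatever operator space structure the colimit carries. The paper's version buys uniformity: it is the verbatim noncommutative transcription of its own $\Ban$ argument from Section \ref{sec:banach}, with the external fact about directed unions inside $\ell_\infty$ and the $\Min$ functor playing the role that your one-dimensional factorisation plays.
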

\begin{proof}
  It is obvious that $0$ is finitely-presentable, so let us assume that $X$ is an operator space with dimension greater than zero. 
  Consider the commutative von Neumann algebra $\ell_\infty$ viewed as an operator space. For each $i \in \mathbb N$, consider the closed subspace
  \[ D_i \eqdef \{ (a_k)_{k \in \mathbb N} \ |\ a_k = 0, \text{ for every } k > i \} \subset \ell_\infty . \]
  Viewing each $D_i$ as an operator space, we have completely isometric
  inclusions $D_i \subset D_j$ for $i < j$, so we obtain a directed diagram $D$
  in $\OS$. Each of the operator spaces $D_i$ and $\ell_\infty$ are minimal
  operator spaces, because they are subspaces of a commutative C*-algebra
  \cite[Proposition 3.3.1]{effros-ruan}.  We have that
  \[ c_{00} = \bigcup_{i \in \mathbb N} D_i \]
  as normed spaces. But the colimit of $D$ in $\OS$ is the operator space
  determined by the closure of $c_{00}$ in $\ell_\infty$ (see \cite[pp. 37]{effros-ruan}
  for more information), equivalently the completion of $c_{00},$ i.e.
  \[ \colim(D) = \Min(c_0) \subset \ell_\infty . \] 
  Clearly, $c_{00} \subsetneq \Min(c_0)$, so we can find a linear contraction
  $f \colon X \to \Min(c_0)$ whose image $f[X]$ contains an element that is not
  in $c_{00}$, e.g. by using the Hahn-Banach theorem.
  Any such contraction is necessarily a complete contraction (see
  also the adjoint situation in \eqref{eq:triple-adjunctions} for a categorical
  justification) and it is clear that it cannot factorise through any of the
  inclusions $D_i \subset \Min(c_0)$. Therefore, $X$ is not finitely-presentable.
\end{proof}

In order to determine the countably-presentable objects of $\OS$, we use an
explicit description of colimits of countably-directed diagrams in $\OS$, which
turns out to be very similar to the construction in $\Ban$.

\begin{construction}[Countably-directed colimits in $\mathbf{OS}$]\label{constr:aleph1-directed-colimit}
  Let $D:\Lambda\to\mathbf{OS}$ be a countably-directed diagram in $\mathbf{OS}$, and let $(C, \{ c_\lambda\}_{\lambda \in \Lambda})$ be its colimiting cocone
  in $\Ban$ as in Construction \ref{constr:aleph1-directed-colimit-Ban}.
  We define an operator space structure on $C$ by complete analogy:
  if $v \in \mathbb M_n(C)$, let
  \[ \norm{ v } \eqdef \inf\ \{ \norm{w} \ |\ \kappa \in \Lambda, w \in M_n(D_\kappa), c_\kappa^n(w) = v \} , \]
  where $c_\kappa^n \eqdef (c_\kappa)_n \colon M_n(D_\kappa) \to \mathbb M_n(C)$ is simply the $n$-th amplification of $c_\kappa.$
  We write $M_n(C)$ for the space $\MM_n(C)$ equipped with the norm defined above.
\end{construction}

We have to justify that $M_n(C)$ from the above construction is a Banach space.
This is true because $M_n(C)$ can be recovered as a suitable colimit in $\Ban$,
as we show next.

\begin{lemma}\label{lem:amplification-colimits}
    Let $D \colon \Lambda \to \OS$ be a countably-directed diagram in $\OS$.
    For each $n\in\mathbb N$, let $D_n:\Lambda\to\mathbf{Ban}$ be the diagram
    given by  $D_n(\lambda)\eqdef M_n(D_\lambda)$ and
    $D_n(\lambda\leq\kappa)\eqdef (D_{\lambda,\kappa})_n$. For simplicity, we
    write $D^n_{\lambda,\kappa}$ instead of $(D_{\lambda,\kappa})_n$. Then
    $M_n(C)$ as described in Construction \ref{constr:aleph1-directed-colimit}
    is the colimit of $D_n$ in $\Ban$ with colimiting maps
    $c_\lambda^n:M_n(D_\lambda)\to M_n(C)$. 
\end{lemma}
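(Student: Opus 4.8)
The plan is to show that the norm on $M_n(C)$ defined in Construction \ref{constr:aleph1-directed-colimit} coincides with the norm arising from the colimit of the diagram $D_n$ in $\Ban$ as given by Construction \ref{constr:aleph1-directed-colimit-Ban}. The key observation is that the underlying vector space and the colimiting cocone structure already match up: the colimit of $D_n$ in $\Ban$ is built on the set-level colimit of the underlying sets of the $M_n(D_\lambda)$, and this agrees (as a vector space, via the $n$-th amplification functor) with $\MM_n(C)$ where $C$ is the $\Ban$-colimit of $D$. So the entire content reduces to checking that the two norm formulas agree.

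**First I would** spell out the norm on the $\Ban$-colimit of $D_n$ using Construction \ref{constr:aleph1-directed-colimit-Ban}. For an element $v \in \MM_n(C)$, that construction gives
\[ \norm{v}_{\mathrm{Ban}} = \inf\ \{ \norm{w} \ |\ \kappa \in \Lambda,\ w \in M_n(D_\kappa),\ c_\kappa^n(w) = v \}, \]
where $c_\kappa^n = (c_\kappa)_n$ is precisely the colimiting map of the diagram $D_n$ in $\Ban$. But this is verbatim the formula defining $\norm{v}$ in Construction \ref{constr:aleph1-directed-colimit}. Hence the norms coincide by definition, and it only remains to verify that the colimiting cocone of $D_n$ in $\Ban$ is indeed the one with apex $\MM_n(C)$ and maps $c_\lambda^n$.

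**The main thing to pin down** is the identification of the vector-space-level colimit. I would invoke the fact (established in Construction \ref{constr:aleph1-directed-colimit-Ban} and Corollary \ref{cor:colimits-ban}) that the $\Ban$-colimit of a countably-directed diagram is computed on underlying vector spaces exactly as the $\Vect$-colimit, and then observe that the $n$-th amplification $(-)_n = \MM_n(-)$ is an exact additive functor that commutes with the relevant vector-space colimit: an element of $\MM_n(C)$ is a matrix with entries in $C$, and since each entry is represented by some element of a $D_\kappa$ (with a common $\kappa$ obtainable by countable-directedness, in fact by finite-directedness here since a matrix has finitely many entries), every $v \in \MM_n(C)$ is of the form $c_\kappa^n(w)$ for suitable $\kappa$ and $w \in M_n(D_\kappa)$. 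This shows the cocone $(\MM_n(C), \{c_\lambda^n\})$ is jointly surjective and is the $\Vect$-colimit of $D_n$, so by Corollary \ref{cor:colimits-ban} applied to the diagram $D_n$, the Banach space $M_n(C)$ (with the infimum norm above) is its $\Ban$-colimit.

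**The only subtle point** — and the step I expect to require the most care — is confirming that the set-level/vector-space-level colimit underlying the $\Ban$-colimit of $D_n$ is genuinely $\MM_n(C)$ with cocone maps $c_\lambda^n$, rather than merely being isomorphic to it in some noncanonical way. This amounts to checking that the equivalence relation defining the colimit of $D_n$ (identifying $s_\lambda(w) \sim s_\kappa(w')$ when some $D^n_{\lambda,\tau}(w) = D^n_{\kappa,\tau}(w')$) corresponds exactly, entrywise, to the equivalence relation defining $C$ from $D$. Since $D^n_{\lambda,\tau} = (D_{\lambda,\tau})_n$ acts entrywise, two matrices are identified in the $D_n$-colimit if and only if their corresponding entries are identified in $C$, so the bijection $\MM_n(C) \cong \mathrm{colim}\, D_n$ (as sets/vector spaces) is the canonical one and intertwines $c_\lambda^n$ with the colimiting maps of $D_n$. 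Once this is established, the norm identification is immediate and the lemma follows; in particular $M_n(C)$ is a Banach space because it is a $\Ban$-colimit.
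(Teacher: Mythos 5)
Your proposal is correct, and it follows the same overall decomposition as the paper's proof: first identify the cocone $(\MM_n(C), \{c_\lambda^n\}_{\lambda\in\Lambda})$ as the colimit of $D_n$ at the level of $\Vect$, then observe that the norm of Construction \ref{constr:aleph1-directed-colimit} is verbatim the norm that Construction \ref{constr:aleph1-directed-colimit-Ban} assigns to this cocone, and conclude via Corollary \ref{cor:colimits-ban}. Where you genuinely differ is in the justification of the first step. The paper argues abstractly: since $\Vect$ is symmetric monoidal closed, the endofunctor $\MM_n \otimes (-)$, which is naturally isomorphic to $\MM_n(-)$, is a left adjoint and hence cocontinuous, so it sends the colimiting cocone of $D$ in $\Vect$ to a colimiting cocone of $D_n$. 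You instead verify the colimit property concretely, using the standard characterisation of directed colimits: every matrix in $\MM_n(C)$ has finitely many entries, each represented at some stage of the diagram, so by (finite) directedness all are represented at a common stage, giving joint surjectivity up to stages; and since $D^n_{\lambda,\tau}$ acts entrywise, matrices whose entries are identified in $C$ become equal at a common later stage, again by directedness over the $n^2$ entries. This more elementary route has the merit of making explicit that the identification $\MM_n(C) \cong \colim\, D_n$ is the canonical one intertwining the cocone maps -- a point the paper's functorial argument yields automatically -- and it avoids invoking monoidal closedness of $\Vect$ altogether. One small caution: your parenthetical appeal to $\MM_n(-)$ being an ``exact additive functor that commutes with the relevant vector-space colimit'' is circular as stated, since commuting with the colimit is exactly what is being proved; the honest content of your argument is the element-level check that follows it, which is correct and suffices on its own.
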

\begin{proof}
    It is well known that the category $\mathbf{Vect}$ of vector spaces over
    $\mathbb C$ is symmetric monoidal closed with respect to the algebraic
    tensor product $\otimes$, where the internal hom $[X,Y]$ between two vector
    spaces is given by the set of linear maps between them. As a consequence,
    the endofunctor $\MM_n \otimes(-) \colon \mathbf{Vect} \to \mathbf{Vect}$ preserves colimits.
    Note that this functor is naturally isomorphic to the functor $\MM_n(-) \colon \mathbf{Vect} \to \mathbf{Vect}$
    defined on objects by $X\mapsto \MM_n(X)$ and on
    linear maps by $u \mapsto u_n$. Since $C$ is the colimit of $D$ in
    $\mathbf{Vect}$, it follows that the linear maps
    $c_\lambda^n:\MM_n(D_\lambda)\to\MM_n(C)$ form the colimiting cocone of
    the diagram $D_n$ regarded as a functor $\Lambda\to\mathbf{Vect}$.
    Taking the norms of the objects in the diagram into account, i.e. the Banach spaces $M_n(D_\lambda)$,
    we see that the norm on $\MM_n(C)$ from Construction
    \ref{constr:aleph1-directed-colimit-Ban} (in $\Ban)$ coincides with the norm on $\MM_n(C)$
    that we just defined in Construction \ref{constr:aleph1-directed-colimit}.
    Therefore, using Corollary \ref{cor:colimits-ban}, we see that the colimit
    of $D_n:\Lambda\to\mathbf{Ban}$ is $M_n(C)$ together with the colimiting
    maps $c_\lambda^n.$
\end{proof}

We can now show that the construction indeed yields an operator space.

\begin{proposition}
  \label{prop:construction-operator-space}
  The norms defined in Construction \ref{constr:aleph1-directed-colimit} give $C$ an operator space structure.
\end{proposition}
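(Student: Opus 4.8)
The plan is to verify directly the three defining axioms of Definition \ref{def:operator-space} for the family of norms $\{\norm{-}_n\}$ introduced in Construction \ref{constr:aleph1-directed-colimit}. The decisive input is Lemma \ref{lem:amplification-colimits}, which identifies $M_n(C)$, together with the amplifications $c_\lambda^n$, as the colimit in $\Ban$ of the diagram $D_n$. This already guarantees that each $\norm{-}_n$ is a genuine complete norm on $\MM_n(C)$; in particular axiom (B) holds, since $M_1(C) = C$ is precisely the Banach-space colimit from Construction \ref{constr:aleph1-directed-colimit-Ban} (Corollary \ref{cor:colimits-ban}). Thus it remains only to establish the compatibility axioms (M1) and (M2). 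The identification also lets me transport the $\Ban$-colimit results — Lemma \ref{lem:increasing}, Lemma \ref{lem:norm v is norm w}, and the infimum description of the norm — to each $M_n(C)$. Throughout I will use two elementary, purely algebraic facts about amplifications of a linear map $u$: namely $u_{m+n}(a\oplus b) = u_m(a)\oplus u_n(b)$ and $u_m(\alpha a\beta) = \alpha\,u_m(a)\,\beta$ for scalar matrices $\alpha,\beta\in M_m$, both immediate by applying $u$ entrywise and using its linearity.

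For (M2), given $x\in\MM_m(C)$ and $\alpha,\beta\in M_m$, I would use Lemma \ref{lem:norm v is norm w} applied to the colimit $M_m(C)$ to pick a single index $\lambda$ and a preimage $w\in M_m(D_\lambda)$ with $c_\lambda^m(w) = x$ and $\norm{w}_m = \norm{x}_m$. Since $c_\lambda^m(\alpha w\beta) = \alpha x\beta$ by the amplification identity, the infimum defining $\norm{\alpha x\beta}_m$ is bounded above by $\norm{\alpha w\beta}_m$, and axiom (M2) applied inside the operator space $D_\lambda$ gives $\norm{\alpha w\beta}_m \le \norm{\alpha}\,\norm{w}_m\,\norm{\beta} = \norm{\alpha}\,\norm{x}_m\,\norm{\beta}$, which is exactly (M2) for $C$.

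Axiom (M1) splits into two inequalities and is where the real work lies. For $\norm{x\oplus y}_{m+n}\le\max\{\norm{x}_m,\norm{y}_n\}$, I would use Lemma \ref{lem:norm v is norm w} to lift $x$ and $y$ to norm-preserving preimages over two indices, push both along the connecting maps to a common index $\mu$ furnished by directedness, and invoke Lemma \ref{lem:increasing}(b) to see that the pushed-forward elements $w_1',w_2'$ still map onto $x,y$ while their norms do not increase; forming $w_1'\oplus w_2'\in M_{m+n}(D_\mu)$, axiom (M1) in $D_\mu$ together with $c_\mu^{m+n}(w_1'\oplus w_2') = x\oplus y$ and the infimum bound yields the inequality. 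The reverse inequality $\max\{\norm{x}_m,\norm{y}_n\}\le\norm{x\oplus y}_{m+n}$ is the main obstacle: here I would instead use Lemma \ref{lem:norm v is norm w} to choose an \emph{exact}-norm preimage $z\in M_{m+n}(D_\nu)$ of $x\oplus y$, write $z$ in $2\times 2$ block form, and observe that applying $c_\nu^{m+n}$ entrywise forces its diagonal blocks to satisfy $c_\nu^m(z_{11}) = x$ and $c_\nu^n(z_{22}) = y$. Compressing $z$ by the coordinate projection $P = I_m\oplus 0_n\in M_{m+n}$ and using (M2) and (M1) inside $D_\nu$ gives $\norm{z_{11}}_m\le\norm{z}_{m+n}$ and likewise for $z_{22}$; the infimum definition then gives $\norm{x}_m\le\norm{z_{11}}_m\le\norm{z}_{m+n} = \norm{x\oplus y}_{m+n}$, and symmetrically for $y$. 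Combining the two inequalities proves (M1) and completes the verification.
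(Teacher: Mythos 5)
Your proof is correct, but it takes a genuinely different route from the paper's. The paper never splits (M1) into two inequalities: instead it applies Proposition \ref{prop:closed-separable} (via Lemma \ref{lem:amplification-colimits}) to the finite-dimensional subspaces $\mathrm{span}(x)$, $\mathrm{span}(y)$, $\mathrm{span}(x\oplus y)$ (and $\mathrm{span}(x,\alpha x\beta)$ for (M2)) to obtain, at a common index, lifts $v,w,t$ whose norms \emph{exactly} equal those of $x$, $y$, $x\oplus y$; since $c_\tau^{m+n}(v\oplus w)=c_\tau^{m+n}(t)$ need not force $t=v\oplus w$, it then uses the colimit's equivalence relation to find $\lambda\geq\tau$ where the two lifts are identified, after which (M1) for $D_\lambda$ transports to $C$ as an equality in one stroke, and (M2) is handled by the same template. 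You instead avoid Proposition \ref{prop:closed-separable} and the reconciliation step entirely, working only with single-element exact-norm lifts from Lemma \ref{lem:norm v is norm w}: the inequality $\norm{x\oplus y}_{m+n}\leq\max\{\norm{x}_m,\norm{y}_n\}$ comes from pushing two lifts to a common index via Lemma \ref{lem:increasing}(b), and the reverse inequality from the compression trick $PzP=z_{11}\oplus 0_n$ with $P=I_m\oplus 0_n$, using (M2) and (M1) \emph{inside} $D_\nu$ together with the observation that the diagonal blocks of any lift of $x\oplus y$ are themselves lifts of $x$ and $y$; your (M2) argument is likewise a one-line infimum bound, which suffices because (M2) is only an inequality. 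Your route is more elementary and local -- it needs none of the separable-subspace machinery and makes visible the standard operator-space fact that compressions are contractive; the paper's route buys uniformity (one ``lift isometrically, then identify at a higher level'' template covering both axioms, yielding (M1) as a direct equality) and reuses Proposition \ref{prop:closed-separable}, which the paper has already developed for the presentability results.
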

\begin{proof}
  We already know that $C$ is Banach space, so it remains to show that
  $C$ satisfies axioms (M1) and (M2) of abstract operator spaces.
  In order to verify (M1), let $x\in M_m(C)$ and $y\in M_n(C)$, then
  \begin{align*}
      \norm{x}_m & = \inf\ \{ \norm{v}_m \ |\ \kappa \in \Lambda, v \in M_m(D_\kappa), c_\kappa^m(v) = x \}\\
      \norm{y}_n &= \inf\ \{ \norm{w}_n \ |\ \kappa \in \Lambda, w \in M_n(D_\kappa), c_\kappa^n(w) = y \} \\
      \norm{x\oplus y}_{n+m} & = \inf\ \{ \norm{z}_{n+m} \ |\ \kappa \in \Lambda, z \in M_{n+m}(D_\kappa), c_\kappa^{n+m}(z) = x\oplus y \}.
  \end{align*}
  Consider the one-dimensional closed subspaces spanned by $x$, $y$ and
  $x \oplus y$ in $M_m(C), M_n(C)$ and $M_{n+m}(C)$, respectively.
  Combining Lemma \ref{lem:amplification-colimits} and
  Proposition \ref{prop:closed-separable}, we can find
  $\lambda_m, \lambda_n$ and $\lambda_{n+m}$ in $\Lambda$,
  such that $c_{\lambda_m}^m, c_{\lambda_n}^n$ and $c_{\lambda_{n+m}}^{n+m}$
  (co)restrict to isometric isomorphisms with respect to these subspaces.
  Since the diagram $D$ is directed, we can find
  $\tau \in \Lambda$, such that $\tau$ is an upper bound of
  $\{\lambda_m, \lambda_n, \lambda_{n+m}\}$ in $\Lambda$.
  Let $v \in M_m(D_\tau), w \in M_n(D_\tau),$ and $t \in M_{n+m}(D_\tau)$ be the unique elements in the corresponding subspaces such that $c_\tau^m(v) = x,
  c_\tau^n(w) = y,$ and $c_\tau^{n+m}(t) = x \oplus y$. It follows that $\norm v_m = \norm x_m,\norm w_n = \norm y_n,$ and $\norm t_{n+m} = \norm{x \oplus y}_{n+m}.$
  We obviously have
  \[ c_\tau^{n+m} (v \oplus w) = c_\tau^m(v) \oplus c_\tau^n(w) = x \oplus y = c_\tau^{n+m}(t), \]
  however we do not know if $t = v \oplus w$. To resolve this,
  by construction of the colimit in $\Ban$, the above equation implies that there exists $\lambda \geq \tau$ such that
  $D_{\tau, \lambda}^{m+n}(v \oplus w) = D_{\tau, \lambda}^{m+n}(t).$
  It again follows that
  \begin{align*}
    c_\lambda^m &\colon M_m(D_\lambda) \to M_m(C) \\
    c_\lambda^n &\colon M_n(D_\lambda) \to M_n(C) \\
    c_\lambda^{n+m} &\colon M_{n+m}(D_\lambda) \to M_{n+m}(C)
  \end{align*}
  (co)restrict to isometric isomorphisms with respect to the aforementioned one-dimensional subspaces.
  After defining
  \[ v' \eqdef D_{\tau,\lambda}^m(v), w' \eqdef D_{\tau,\lambda}^n(w), \text{ and } t' \eqdef D_{\tau,\lambda}^{n+m}(t) , \]
  we now have
  \[ \norm{v'}_m = \norm{x}_m,\norm{w'}_n = \norm{y}_n, \text{ and } \norm{t'}_{n+m} = \norm{x \oplus y}_{n+m}, \]
  but also $t' = v' \oplus w',$ so that $\norm{v' \oplus w'}_{n+m} = \norm{x \oplus y}_{n+m}.$
  Therefore
  \begin{align*}
    \norm{x \oplus y}_{n+m} &= \norm{v' \oplus w'}_{n+m} & \\
                            &= \mathrm{max} \{ \norm{v'}_m , \norm{w'}_n \} & (\text{axiom (M1) for } D_\lambda) \\
                            &= \mathrm{max} \{ \norm x_m , \norm y_n \} & 
  \end{align*}
  which shows that axiom (M1) holds for $C$.

  For (M2), let $m\in\mathbb N$, $x\in M_m(C)$, and $\alpha,\beta\in M_m$.
  Using essentially the same arguments as above, we can find $\tau \in \Lambda$
  such that $c_\tau^m \colon M_m(D_\tau) \to M_m(C)$ (co)restricts to an
  isometric isomorphism on the closed subspace
  \[ \mathrm{span}(x, \alpha x \beta) \subseteq  M_m(C). \]
  Let $v \in M_m(D_\tau)$ and $t \in M_m(D_\tau)$ be the unique elements in the corresponding subspace such that $c_\tau^m(v) = x$ and $c_\tau^m(t) = \alpha x \beta$.
  The linearity of $c_\tau^m$ implies that
  \[ c_\tau^m(\alpha v \beta) = \alpha c_\tau^m(v) \beta = \alpha x \beta = c_\tau^m(t). \]
  Again, we can now find $\lambda \geq \tau$ such that $D_{\tau, \lambda}^m(t) = D_{\tau, \lambda}^m(\alpha v \beta)$ and after defining
  \[ v' \eqdef D_{\tau, \lambda}^m(v) \text{ and } t' \eqdef D_{\tau, \lambda}^m(t) \]
  we have
  \[ \alpha v' \beta = \alpha D_{\tau, \lambda}^m(v) \beta =  D_{\tau, \lambda}^m(\alpha v \beta ) = D_{\tau, \lambda}^m(t) = t' . \]
  It follows that $\norm{v'}_m = \norm{x}_m$ and $\norm{\alpha v' \beta}_m = \norm{t'}_m = \norm{\alpha x \beta}_m.$
  By axiom (M2) for $D_\lambda$ we have that $\norm{\alpha v' \beta}_m \leq \norm{\alpha} \norm{v'}_m \norm{\beta}$
  and therefore
  \[ \norm{\alpha x \beta}_m = \norm{\alpha v' \beta}_m \leq \norm{\alpha} \norm{v'}_m \norm{\beta} = \norm \alpha \norm{x}_m \norm \beta \]
  which shows that axiom (M2) holds for $C.$
  Therefore $C$ is indeed an operator space.
\end{proof}

\begin{proposition}
  \label{prop:aleph1-directed-colimit}
  Let $D \colon \Lambda \to \OS$ be a countably-directed diagram in $\OS$. Then the colimit of $D$ is given by Construction \ref{constr:aleph1-directed-colimit}.
\end{proposition}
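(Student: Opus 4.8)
The plan is to reduce everything to the Banach-space facts already in hand, exploiting that the underlying Banach space of $C$ is $\colim D$ in $\Ban$ (Corollary~\ref{cor:colimits-ban}) and that, amplified, each $M_n(C)$ is $\colim D_n$ in $\Ban$ (Lemma~\ref{lem:amplification-colimits}). Since Proposition~\ref{prop:construction-operator-space} already guarantees that $C$ carries an operator space structure, two things remain: that $(C, \{c_\lambda\})$ is a cocone of $D$ in $\OS$, and that it is couniversal among such cocones.

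First I would check the cocone conditions. The equations $c_\kappa \circ D_{\lambda,\kappa} = c_\lambda$ hold already at the level of $\Ban$ (indeed of $\Vect$), so nothing new is needed there. To see that each $c_\lambda$ is a complete contraction, I would invoke Lemma~\ref{lem:amplification-colimits}: for every $n$, the map $c_\lambda^n \colon M_n(D_\lambda) \to M_n(C)$ is one of the colimiting maps of $D_n$ in $\Ban$ and is therefore a contraction. As this holds for all $n$, the map $c_\lambda$ is a complete contraction, and $(C, \{c_\lambda\})$ is a genuine cocone in $\OS$.

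The heart of the argument is couniversality. Given any cocone $(Z, \{z_\lambda\})$ of $D$ in $\OS$, I would first forget down to $\Ban$: each $z_\lambda$ is in particular a contraction, so by Corollary~\ref{cor:colimits-ban} there is a unique contraction $u \colon C \to Z$ with $u \circ c_\lambda = z_\lambda$. It then suffices to prove that this $u$ is actually a complete contraction, i.e. that $u_n \colon M_n(C) \to M_n(Z)$ is a contraction for each $n$. Fixing $n$, I would note that since each $z_\lambda$ is a complete contraction, each amplification $z_\lambda^n$ is a contraction, so $(M_n(Z), \{z_\lambda^n\})$ is a cocone of $D_n$ in $\Ban$. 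By Lemma~\ref{lem:amplification-colimits} the colimit of $D_n$ is $M_n(C)$, so there is a unique mediating contraction $v_n \colon M_n(C) \to M_n(Z)$ with $v_n \circ c_\lambda^n = z_\lambda^n$. The key observation is then that $v_n = u_n$: since amplification is functorial, $u_n \circ c_\lambda^n = (u \circ c_\lambda)_n = z_\lambda^n$, so $u_n$ also mediates the same cocone, and since $M_n(C)$ is the colimit of $D_n$ already in $\Vect$ (as established inside the proof of Lemma~\ref{lem:amplification-colimits}), the mediating linear map is unique. Hence $u_n = v_n$ is a contraction for every $n$, so $u$ is a complete contraction.

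I expect the identification $u_n = v_n$ to be the one step requiring care: it is exactly where Lemma~\ref{lem:amplification-colimits} is indispensable, since it lets me transport the contractivity of the amplified mediating map back to the amplification of the single mediating map $u$. Uniqueness of $u$ in $\OS$ is then immediate, because any complete contraction $C \to Z$ compatible with the cocone is in particular a contraction and must therefore agree with the unique $\Ban$-mediating map $u$. This completes the verification that $(C, \{c_\lambda\})$ is the colimit of $D$ in $\OS$.
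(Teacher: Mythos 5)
Your proposal is correct and follows essentially the same route as the paper's proof: both obtain the mediating map at level $1$ from the Banach-space colimit, amplify it, and identify $u_n$ with the level-$n$ mediating contraction $v_n$ (the paper's $g^{(n)}$) via the uniqueness of mediating linear maps at the $\Vect$ level supplied by Lemma~\ref{lem:amplification-colimits}. The only cosmetic difference is in step (1), where the paper verifies that each $c_\lambda^n$ is a contraction directly from the infimum defining the norm on $M_n(C)$ rather than by citing the lemma, but the content is identical.
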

\begin{proof}
  We just showed that $C$ is an operator space and since $C$ is the colimit of $D$ in $\mathbf{Ban}$, it is sufficient to verify that:
  \begin{itemize}
      \item[(1)] $c_\lambda$ is a complete contraction for each $\lambda\in \Lambda$;
      \item[(2)] if $X$ is another operator space, and $d_\lambda: D_\lambda\to X$ forms a cocone, then there is a unique complete contraction $f:C\to X$ such that $f\circ c_\lambda=d_\lambda$ for each $\lambda\in \Lambda$. 
  \end{itemize}
  We start with (1). Let $\lambda\in\Lambda$, let $n\in\mathbb N$, and let $v\in M_n(D_\lambda)$. Then, by construction of the norm on $M_n(C)$, we have
  \begin{align*}
  \norm{c_\lambda^n(v)}_n & = \inf\{\norm{w}_n\ |\ \kappa\in\Lambda,w\in M_n(D_\kappa), c_\kappa^n(w)=c_\lambda^n(v)\}\leq\norm{v}_n,
  \end{align*}
  so $c^n_\lambda$ is a contraction for each $n\in\mathbb N$. Therefore $c_\lambda$ is a complete contraction.

  For (2), given a cocone $d_\lambda: D_\lambda\to C$
  in $\mathbf{OS}$, it follows that $d_\lambda$ also forms a cocone in
  $\mathbf{Ban}$. Fix $n\in\mathbb N$, and let
  $d_\lambda^n:M_n(D_\lambda)\to M_n(C)$ denote the $n$-th amplification of
  $d_\lambda$. Then the maps $d_\lambda^n$ form a cocone of the diagram $D_n$ in $\mathbf{Ban}$
  and by Lemma \ref{lem:amplification-colimits} it follows that
  there is a unique contraction $g^{(n)}:M_n(C)\to M_n(X)$ such that
  $g^{(n)}\circ c_\lambda^n=d_\lambda^n$ for each $\lambda\in\Lambda$.
  Let $f= g^{(1)}$. Then
  also the $n$-th amplification $f_n$ of $f$ satisfies $f_n\circ c_\lambda^n=d_\lambda^n$.
  But $M_n(C)$ is also the colimit of $D_n$ as a diagram in $\Vect$, so it
  follows that $f_n=g^{(n)}$. Therefore $f_n$ is a contraction for each
  $n\in\mathbb N$ and so $f$ is a complete contraction.
\end{proof}

Recall that an operator space $X$ is \emph{separable} whenever $X$ is separable
as a Banach space. Next, we want to show that any separable operator space is
countably-presentable in $\OS.$ First, we need an additional lemma.

\begin{lemma}[{\cite[pp. 20]{effros-ruan}}]
  \label{lem:norm X vs norm MnX}
  Let $X$ be an operator space. Then for each $n\in\mathbb N$ and each $x=[x_{ij}]$ in $M_n(X)$, we have
    \begin{itemize}
        \item[(1)] $\|x_{ij}\|\leq \|x\|$ for each $i,j\in\{1,\ldots,n\}$;
        \item[(2)] $\|x\|\leq\sum_{i,j=1}^n\|x_{ij}\|$;
        \item[(3)] any sequence $x(1),x(2),\ldots$ in $M_n(x)$ converges to $x$ in $M_n(X)$ if and only if $x(1)_{ij},x(2)_{ij}$ converges to $x_{ij}$ in $X$ for each $i,j\in\{1,\ldots,n\}$. 
        \end{itemize}
    \end{lemma}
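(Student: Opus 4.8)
The plan is to derive all three claims directly from the operator space axioms (M1) and (M2) of Definition \ref{def:operator-space}, using the scalar matrix units $E_{kl} \in M_n$ (the matrix with a $1$ in position $(k,l)$ and zeros elsewhere). Each $E_{kl}$ is a rank-one partial isometry, so $\|E_{kl}\| = 1$ in the operator norm on $M_n$, and a routine matrix computation shows that $E_{1i}\,x\,E_{j1}$ is the element of $M_n(X)$ having $x_{ij}$ in the top-left corner and zeros elsewhere; that is, $E_{1i}\,x\,E_{j1} = [x_{ij}] \oplus 0$, where $0 \in \MM_{n-1}(X)$ is the zero matrix. I would introduce this corner element once and use it throughout.

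For (1), applying (M2) to $E_{1i}\,x\,E_{j1}$ gives $\|[x_{ij}] \oplus 0\| \leq \|E_{1i}\|\,\|x\|\,\|E_{j1}\| = \|x\|$, while (M1) gives $\|[x_{ij}] \oplus 0\| = \max\{\|x_{ij}\|, 0\} = \|x_{ij}\|$. Combining the two yields $\|x_{ij}\| \leq \|x\|$. (The case $n = 1$ is trivial, since then $x = [x_{11}]$.)

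For (2), I would expand $x = \sum_{i,j=1}^n E_{ii}\,x\,E_{jj}$, noting that each summand $E_{ii}\,x\,E_{jj}$ is the element of $M_n(X)$ with $x_{ij}$ in position $(i,j)$ and zeros elsewhere. By the triangle inequality in $M_n(X)$ it then suffices to bound each summand by $\|x_{ij}\|$. This follows because $E_{ii}\,x\,E_{jj} = E_{i1}\,([x_{ij}] \oplus 0)\,E_{1j}$, i.e. the corner element transported into position $(i,j)$ by conjugation with matrix units, so that (M2) together with part (1) gives $\|E_{ii}\,x\,E_{jj}\| \leq \|[x_{ij}] \oplus 0\| = \|x_{ij}\|$. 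Summing over $i,j$ gives $\|x\| \leq \sum_{i,j}\|x_{ij}\|$.

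Part (3) then follows immediately from (1) and (2) applied to the differences $x(k) - x$: if $\|x(k) - x\| \to 0$, then (1) forces $\|x(k)_{ij} - x_{ij}\| \leq \|x(k) - x\| \to 0$ for every $i,j$; conversely, if every entry converges, then (2) gives $\|x(k) - x\| \leq \sum_{i,j}\|x(k)_{ij} - x_{ij}\| \to 0$, a finite sum of null sequences. The only step requiring any care — bookkeeping rather than a genuine obstacle — is verifying the two matrix-unit identities $E_{1i}\,x\,E_{j1} = [x_{ij}] \oplus 0$ and $E_{ii}\,x\,E_{jj} = E_{i1}([x_{ij}] \oplus 0)E_{1j}$ from the definition of matrix multiplication, and in observing that (M2) applies verbatim here, since it is stated precisely for square scalar matrices $\alpha,\beta \in M_n$, which is exactly what the matrix units are (so no rectangular-multiplier version of the axiom is needed).
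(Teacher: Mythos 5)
Your proof is correct, and all the computational claims check out: $E_{1i}\,x\,E_{j1}$ is indeed the matrix with $x_{ij}$ in the $(1,1)$ corner and zeros elsewhere, $E_{ii}\,x\,E_{jj} = E_{i1}([x_{ij}]\oplus 0)E_{1j}$ holds, the decomposition $x = \sum_{i,j} E_{ii}\,x\,E_{jj}$ follows from $\sum_i E_{ii} = I$ and bilinearity of the multiplication, and parts (1)--(3) then follow exactly as you argue. The comparison with the paper is quickly stated: the paper gives \emph{no} proof at all, importing the lemma verbatim from Effros--Ruan (pp.\ 20), where the corresponding inequalities are obtained by compressing with scalar row and column matrices, i.e.\ via a \emph{rectangular} form of the multiplication axiom ($\alpha \in M_{1,n}$, $\beta \in M_{n,1}$), which is how that book axiomatises operator spaces. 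Your argument is therefore a genuine addition relative to the paper, and it has one nice feature worth keeping: the paper's Definition \ref{def:operator-space} states (M2) only for \emph{square} scalar matrices $\alpha, \beta \in M_m$, and your proof never needs anything more, because you use (M1) to read off the norm of the corner element $[x_{ij}]\oplus 0$ instead of compressing to a $1\times 1$ matrix. This makes the lemma self-contained with respect to the axioms exactly as this paper states them, with no detour through a rectangular-multiplier variant. Two cosmetic points: in part (2), the equality $\|[x_{ij}]\oplus 0\| = \|x_{ij}\|$ that you attribute to ``part (1)'' is really the (M1) computation occurring \emph{inside} your proof of (1) --- part (1) itself is only an inequality --- so it is cleaner to cite (M1) directly; and the $n=1$ caveat you make for (1) applies equally (and equally trivially) to (2), since $\MM_0(X)$ is not defined.
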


We can now prove a proposition that is analogous to Proposition
\ref{prop:separable Banach spaces are presentable} in $\Ban$.

\begin{proposition}
  \label{prop:separable-os}
  Let $X$ be a separable operator space, and let $D:\Lambda\to \OS$ be a countably-directed diagram with colimit $(C, \{ c_\lambda\}_{\lambda \in \Lambda})$ as given by
  Construction \ref{constr:aleph1-directed-colimit}. Let $f\colon X\to C$ be a complete contraction. Then
  \begin{itemize}
      \item[(1)] $f=c_\lambda\circ g$  for some $\lambda\in\Lambda$ and some complete contraction $g:X\to D_\lambda$;
      \item[(2)] for each $\lambda\in\Lambda$, if $g,g':X\to D_\lambda$ are bounded maps such that $c_\lambda\circ g=c_\lambda\circ g'$,
        then there exists some $\tau\geq\lambda$ such that $D_{\lambda,\tau}\circ g=D_{\lambda,\tau}\circ g'$.
  \end{itemize}
\end{proposition}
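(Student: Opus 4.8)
The plan is to reduce both parts to the corresponding statements for $\Ban$ (Proposition \ref{prop:separable Banach spaces are presentable}), upgrading the resulting maps to complete contractions where this is needed. Part (2) is immediate: since the forgetful functor $U \colon \OS \to \Ban$ preserves colimits (it is both a left and a right adjoint via \eqref{eq:triple-adjunctions}), the cocone $(C, \{c_\lambda\})$ is also the colimit of $UD$ in $\Ban$, exactly as built by Construction \ref{constr:aleph1-directed-colimit-Ban}. Thus if $g,g' \colon X \to D_\lambda$ are bounded with $c_\lambda \circ g = c_\lambda \circ g'$, then viewing them as bounded maps of the underlying separable Banach space, Proposition \ref{prop:separable Banach spaces are presentable}(2) yields $\tau \geq \lambda$ with $D_{\lambda,\tau} \circ g = D_{\lambda,\tau} \circ g'$, which is precisely the required equality of the underlying (hence of the $\OS$-)morphisms.

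For part (1), I would first establish the completely isometric analogue of Proposition \ref{prop:closed-separable}: for every closed separable operator subspace $Y \subseteq C$ there exist $\lambda \in \Lambda$ and a closed separable operator subspace $Y_\lambda \subseteq D_\lambda$ such that $c_\lambda$ corestricts to a completely isometric isomorphism $Y_\lambda \cong Y$. Granting this and setting $Y \eqdef \overline{f[X]}$, the proof of (1) mirrors the Banach case: writing $i \colon Y_\lambda \to Y$ for the completely isometric corestriction of $c_\lambda$, the map $g \eqdef i^{-1} \circ f \colon X \to D_\lambda$ is the composite of the complete contraction $f$ (corestricted to the operator subspace $Y$) with the complete isometry $i^{-1}$, hence a complete contraction, and $c_\lambda \circ g = f$ by construction.

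To prove the completely isometric analogue, I would begin as in Proposition \ref{prop:closed-separable}: fix a countable dense $A \subseteq Y$ and put $B \eqdef \mathrm{span}_{\mathbb Q[i]}(A)$. Proposition \ref{prop:span-colimit} supplies some $\kappa \in \Lambda$ and a $\mathbb Q[i]$-linear family $\{v_b\}_{b \in B} \subseteq D_\kappa$ with $c_\kappa(v_b)=b$ and $\norm{v_a-v_b}=\norm{a-b}$, securing $\mathbb Q[i]$-linearity and isometry at the first matrix level. The remaining task is to match all higher matrix norms simultaneously, and here I would invoke Lemma \ref{lem:amplification-colimits}, which identifies each $M_n(C)$ with the colimit of the amplified diagram $D_n$ and the maps $c_\lambda^n$. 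For fixed $n$ and a fixed matrix $[b_{ij}] \in \MM_n(B)$, the element $[v_{b_{ij}}] \in M_n(D_\kappa)$ satisfies $c_\kappa^n([v_{b_{ij}}]) = [b_{ij}]$, so $\norm{[b_{ij}]}_{M_n(C)} \leq \norm{[v_{b_{ij}}]}_{M_n(D_\kappa)}$ by the infimum definition. Applying Lemma \ref{lem:norm v is norm w} to the Banach colimit $M_n(C)$ gives a norm-attaining preimage of $[b_{ij}]$ at some index; since it has the same image as $[v_{b_{ij}}]$ in $M_n(C)$, the colimit construction in $\Ban$ (the relation $\sim$ of Construction \ref{constr:aleph1-directed-colimit-Ban}) provides an index $\lambda_{n,[b_{ij}]} \geq \kappa$ at which $\norm{D^n_{\kappa,\lambda_{n,[b_{ij}]}}([v_{b_{ij}}])} = \norm{[b_{ij}]}_{M_n(C)}$.

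There are only countably many pairs $(n,[b_{ij}])$, so countable-directedness furnishes a single $\lambda \geq \kappa$ dominating all $\lambda_{n,[b_{ij}]}$, and these norm equalities persist upward: the argument of Lemma \ref{lem:increasing}(b) at matrix level $n$ shows that once a preimage attains the colimit norm $\norm{[b_{ij}]}_{M_n(C)}$ it cannot drop further, so equality survives at $\lambda$. Setting $v'_b \eqdef D_{\kappa,\lambda}(v_b)$ and using $[v'_{b_{ij}}] = D^n_{\kappa,\lambda}([v_{b_{ij}}])$ (amplification is entrywise application), we obtain a $\mathbb Q[i]$-linear family with $c_\lambda(v'_b)=b$ and $\norm{[v'_{b_{ij}}]}_{M_n(D_\lambda)} = \norm{[b_{ij}]}_{M_n(C)}$ for every $n$ and every matrix over $B$. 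Taking $Y_\lambda \eqdef \overline{\{v'_b : b \in B\}}$, the corestriction of $c_\lambda$ preserves every matrix norm on the dense $\mathbb Q[i]$-subspace, hence is completely isometric by continuity of each $c_\lambda^n$ and of the matrix norms (Lemma \ref{lem:norm X vs norm MnX}(3)), and its image is the closed span of $B$, namely $Y$. The main obstacle is exactly this final coordination step: producing one $\lambda$ that realises the $C$-norm at every matrix level while keeping the preimages coherent (literally the matrices assembled from the single family $\{v'_b\}$), rather than achieving isometry separately at each level; the upward persistence of norm equalities together with the countability of $\MM_n(B)$ over all $n$ is what makes it go through.
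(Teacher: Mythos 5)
Your proof is correct, but for part (1) it takes a genuinely different route from the paper's. The paper never builds a completely isometric copy of $\overline{f[X]}$ inside some $D_\lambda$; instead it applies Proposition \ref{prop:separable Banach spaces are presentable}(1) levelwise to the amplified diagrams $D_n$ (via Lemma \ref{lem:amplification-colimits}) to obtain, for each $n$, a contraction $g^{(n)} \colon M_n(X) \to M_n(D_{\lambda_n})$ with $c_{\lambda_n}^n \circ g^{(n)} = f_n$ --- crucially, these $g^{(n)}$ are \emph{not} amplifications of a single map --- then pushes everything to a common $\kappa$, checks that $(h^{(1)})_n$ is bounded (with the $n^2$ estimate from Lemma \ref{lem:norm X vs norm MnX}), and invokes the essential-uniqueness clause \ref{prop:separable Banach spaces are presentable}(2) \emph{for bounded maps} to force $(h^{(1)})_n$ and $h^{(n)}$ to agree at some $\tau_n \geq \kappa$; after one more application of countable-directedness, the amplifications of the single map $h^{(1)}$ become contractions at every level. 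This is exactly why the paper states part (2) of the Banach proposition for bounded rather than contractive maps. You instead prove a completely isometric strengthening of Proposition \ref{prop:closed-separable}, coordinating all matrix levels at once by applying Lemma \ref{lem:norm v is norm w} to each $M_n(C)$ for each of the countably many matrices $[b_{ij}] \in \MM_n(B)$, and your coordination step is sound: the upward persistence of attained norms (the Lemma \ref{lem:increasing}(b) argument applied to $D_n$) together with a single $\lambda$ dominating the countably many indices $\lambda_{n,[b_{ij}]}$ does yield a $\mathbb Q[i]$-linear family $\{v'_b\}$ realising every matrix norm simultaneously, and the density/continuity argument (via Lemma \ref{lem:norm X vs norm MnX}(3)) correctly upgrades this to a completely isometric isomorphism $Y_\lambda \cong Y$. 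Your approach buys a stronger structural statement of independent interest --- every closed separable subspace of the colimit sits completely isometrically at some stage --- at the cost of more bookkeeping over pairs $(n,[b_{ij}])$; the paper's approach is lighter on element-level analysis and instead exploits the bounded-map version of essential uniqueness, which is why it needs no completely isometric analogue of Proposition \ref{prop:closed-separable} at all. Your part (2) coincides with the paper's.
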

\begin{proof}
  For (1), let $A$ be a countable dense subset of $X.$
  For fixed $n\in\mathbb N$, Lemma \ref{lem:norm X vs norm MnX}.(3)
  implies that $\MM_n(A)$
  is a countable dense subset of $M_n(X)$. For each
  $n\in\mathbb N$, it follows from Lemma \ref{lem:amplification-colimits} that
  $M_n(C)$ is the colimit of the diagram
  $D_n:\Lambda\to\mathbf{Ban}$.
  Hence, if we apply Proposition \ref{prop:separable Banach spaces are presentable},
  we find some $\lambda_n\in\Lambda$ and some contraction
  $g^{(n)}:M_n(X)\to M_n(D_{\lambda_n})$ such that
  $c_{\lambda_n}^n\circ g^{(n)}=f_n$. Note that
  $c_{\lambda_n}^n$ and $f_n$ are the $n$-th amplifications of $c_{\lambda_n}$
  and $f$, respectively, but that $g^{(n)}$ is not necessarily such an
  amplification. By countable-directedness of $\Lambda$, there is some
  $\kappa\geq\lambda_n$ for each $n\in\mathbb N$.
  Let $h^{(n)}\eqdef D^n_{\lambda_n,\kappa}\circ g^{(n)} \colon M_n(X) \to M_n(D_\kappa) .$
  Then
  \[c_\kappa^n\circ h^{(n)}=c_\kappa^n\circ D_{\lambda_n,\kappa}^{n} \circ g^{(n)}=c_{\lambda_n}^n\circ g^{(n)}=f_n.\]
  Now, fix $n\in\mathbb N$. We can show that $(h^{(1)})_n\colon M_n(X)\to M_n(D_\kappa)$ is bounded.
  Indeed, for each $x=[x_{ij}] \in M_n(X)$, we have
  \begin{align*}
    \|(h^{(1)})_n(x)\| &= \|[h^{(1)}(x_{ij})]\| & \\
    &\leq\sum_{i,j=1}^n\|h^{(1)}(x_{ij})\| & (\text{Lemma \ref{lem:norm X vs norm MnX}.(2)}) \\
    &\leq\sum_{i,j=1}^n \|x_{ij}\| & (h^{(1)} \text{ is a contraction}) \\
    &\leq\sum_{i,j=1}^n\|x\| & (\text{Lemma \ref{lem:norm X vs norm MnX}.(1)}) \\ 
    &=n^2\|x\| &
  \end{align*}
  which shows that $(h^{(1)})_n$ is bounded.
  Furthermore,  because
  $c_\kappa\circ h^{(1)}=f$, we have $c_{\kappa}^n\circ (h^{(1)})_n=f_n$. Thus
  $h^{(n)}$ and $(h^{(1)})_n$ are bounded maps $M_n(X)\to M_n(D_\kappa)$ such
  that $c_{\kappa}^n\circ h^{(n)}=f_n=c_{\kappa}^n\circ (h^{(1)})_n$. It now
  follows from Proposition \ref{prop:separable Banach spaces are
  presentable}.(2) that there is some $\tau_n\geq\kappa$ such that
  $D_{\kappa,\tau_n}^n\circ h^{(n)}=D_{\kappa,\tau_n}^n\circ (h^{(1)})_n$. By
  countable-directedness of $\Lambda$, there is some $\lambda\geq\tau_n$ for
  each $n\in\mathbb N$. Define $g\colon X\to D_\lambda$ as $g\eqdef
  D_{\kappa,\lambda}\circ h^{(1)}$. Then for each $n\in\mathbb N$, we have
  \[
    g_n = D_{\kappa,\lambda}^n\circ (h^{(1)})_n
    = D_{\tau_n,\lambda}^n \circ D_{\kappa,\tau_n}^n \circ (h^{(1)})_n
    = D_{\tau_n,\lambda}^n \circ D_{\kappa,\tau_n}^n \circ h^{(n)}.
  \]
  Thus $g_n$ is a composition of contractions, therefore $g$ is a complete contraction.
  Moreover, we have
  \[ c_\lambda\circ g=c_\lambda\circ D_{\kappa,\lambda}\circ h^{(1)}=c_\kappa\circ h^{(1)}=f . \]

  For (2), if $g,g':X\to D_\lambda$ are bounded maps such that
  $c_\lambda\circ g=c_\lambda\circ g'$, we can apply Proposition \ref{prop:separable
  Banach spaces are presentable}.(2) to conclude the existence of some
  $\tau\geq\lambda$ such that
  $D_{\lambda,\tau}\circ g=D_{\lambda,\tau}\circ g'$.
\end{proof}

Next, we would like to prove that any countably-presentable operator space in
$\OS$ is separable. For this, our proof strategy is completely analogous to the
one we used in $\Ban.$ Note that, if $X$ is an operator space, then any closed
subspace of $X$ inherits an operator space structure from $X$ (see
\cite[Chapter 3.1]{effros-ruan}) and the inclusion of any such closed subspace
into $X$ is a complete isometry.

\begin{proposition}
  \label{prop:os-self-colimit}
  Every operator space is a countably-directed colimit (in $\OS$) of its closed separable subspaces.
\end{proposition}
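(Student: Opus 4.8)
The plan is to mirror the proof of Proposition \ref{prop:ban-self-colimit} in the Banach setting, taking care of the additional operator space structure. First I would fix an operator space $X$ and let $\Lambda$ be the poset, ordered by inclusion, of all closed separable subspaces of $X$. Each such subspace is an operator space via the inherited OSS, and its inclusion into $X$ is a complete isometry by the remark preceding the statement. Since every element $x \in X$ lies in the one-dimensional (hence separable) closed subspace $\overline{\mathbb C x} \in \Lambda$, we have $\bigcup_{\lambda \in \Lambda} D_\lambda = X$ as sets.

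Next I would show that $\Lambda$ is countably-directed by exactly the argument used in $\Ban$: given a countable family $\{C_n\}_{n\in\mathbb N} \subseteq \Lambda$, choose a countable dense subset $S_n \subseteq C_n$ for each $n$ and observe that $\overline{\mathrm{span}_{\mathbb Q[i]}\left(\bigcup_n S_n\right)}$ is a closed separable subspace of $X$ containing every $C_n$. Separability is a purely metric property, so this step is insensitive to the operator space structure and goes through verbatim.

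It then remains to verify that the evident diagram $D \colon \Lambda \to \OS$ given by the inclusions has $X$, together with the inclusion cocone $\{D_\lambda \hookrightarrow X\}$, as its colimit in $\OS$. Here I would invoke Proposition \ref{prop:aleph1-directed-colimit}: the colimit is computed by Construction \ref{constr:aleph1-directed-colimit}, so the inclusion cocone induces a canonical comparison complete contraction $u \colon C \to X$. Since the forgetful functor $U \colon \OS \to \Ban$ preserves colimits and the underlying Banach diagram has colimit $X$ by Proposition \ref{prop:ban-self-colimit}, the colimit injections satisfy $U(c_\lambda) = \iota_\lambda$, and by uniqueness of the mediating map in $\Ban$ the map $U(u)$ is forced to be $\id_X$. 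In particular $u$ is a linear bijection acting as the identity on underlying spaces.

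The main point, and the only place where the operator space structure really enters, is to upgrade $u$ to a completely isometric isomorphism. One inequality $\|v\|_{M_n(X)} \le \|v\|_{M_n(C)}$ is immediate, since $u$ is a complete contraction acting as the identity. For the reverse, given $v \in M_n(C) = M_n(X)$, its finitely many matrix entries all lie in some common $D_\lambda \in \Lambda$ (using directedness), so $v \in M_n(D_\lambda)$ and $c_\lambda^n(v) = v$; because the inclusion $D_\lambda \hookrightarrow X$ is a complete isometry, the defining infimum in Construction \ref{constr:aleph1-directed-colimit} yields $\|v\|_{M_n(C)} \le \|v\|_{M_n(D_\lambda)} = \|v\|_{M_n(X)}$. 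Hence $u$ is a complete isometry, therefore a completely isometric isomorphism, and $X$ is the countably-directed colimit in $\OS$ of its closed separable subspaces.
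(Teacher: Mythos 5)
Your proposal is correct and takes essentially the same approach as the paper, whose proof of this proposition is simply the proof of Proposition \ref{prop:ban-self-colimit} with ``Banach space'', ``contraction'', and ``isometry'' replaced by ``operator space'', ``complete contraction'', and ``complete isometry''. Your last two paragraphs merely spell out the colimit verification that the paper leaves implicit (via Construction \ref{constr:aleph1-directed-colimit}, the colimit-preserving forgetful functor $U \colon \OS \to \Ban$, and the matrix-norm computation using complete isometry of the inclusions), and that verification is sound, up to the standard identification of the underlying Banach colimit $U(C)$ with $X$ via the canonical isomorphism.
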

\begin{proof}
  The proof is completely analogous to the proof of Proposition
  \ref{prop:ban-self-colimit}. Simply replace all instances of ``Banach space'',
  ``contraction'', and ``isometry' by ``operator space'', ``complete contraction'',
  and ``complete isometry'', respectively. 
\end{proof}

Finally, we can prove the main result of the subsection.

\begin{theorem}
  \label{thm:presentable-objects}
  An operator space $X$ is countably-presentable in $\OS$ iff $X$ is separable.
\end{theorem}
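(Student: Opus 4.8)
The plan is to mirror the proof of the Banach-space analogue, Theorem~\ref{thm:separable-banach}, essentially verbatim, since all of the genuinely technical work has already been carried out in the preceding propositions. As the statement is an ``iff'', I would establish the two implications separately, each by invoking the characterisation of presentable objects in Proposition~\ref{prop:presentable-object} specialised to $\alpha = \aleph_1$.

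For the ($\Leftarrow$) direction, I would assume $X$ is separable and combine Proposition~\ref{prop:separable-os} with Proposition~\ref{prop:presentable-object}. Explicitly, given any countably-directed diagram $D \colon \Lambda \to \OS$ with colimit $(C, \{c_\lambda\}_{\lambda \in \Lambda})$ as in Construction~\ref{constr:aleph1-directed-colimit}, and any complete contraction $f \colon X \to C$, part~(1) of Proposition~\ref{prop:separable-os} supplies a factorisation $f = c_\lambda \circ g$ through some $D_\lambda$, while part~(2) (with $g, g'$ taken to be complete contractions, which are in particular bounded) yields the essential uniqueness of this factorisation. These are exactly conditions~(1) and~(2) of Proposition~\ref{prop:presentable-object}, so $\OS(X,-)$ preserves countably-directed colimits and $X$ is countably-presentable.

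For the ($\Rightarrow$) direction, I would assume $X$ is countably-presentable and deduce separability. The key input is Proposition~\ref{prop:os-self-colimit}: $X$ is the countably-directed colimit of the diagram $D$ of its closed separable subspaces, with colimiting cocone given by the complete isometric inclusions $c_\lambda \colon D_\lambda \hookrightarrow X$. Viewing the identity as a complete contraction $\id_X \colon X \to \colim D$, countable-presentability together with Proposition~\ref{prop:presentable-object} produces some $\lambda \in \Lambda$ and a complete contraction $g \colon X \to D_\lambda$ with $c_\lambda \circ g = \id_X$. Since $c_\lambda$ is a subspace inclusion and admits the right inverse $g$, it is surjective, so $X = D_\lambda$, which is separable by the very construction of the diagram.

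I do not expect any real obstacle: the theorem is a bookkeeping assembly of Proposition~\ref{prop:separable-os} and Proposition~\ref{prop:os-self-colimit}, the substantive analytic content (the amplified colimit description of Lemma~\ref{lem:amplification-colimits} and the distance-preserving liftings inherited from the Banach setting) having already been isolated there. The only point requiring a moment's care is the observation in the ($\Rightarrow$) direction that a subspace inclusion $c_\lambda$ with $c_\lambda \circ g = \id_X$ must in fact be surjective, forcing $X$ to coincide with the separable subspace $D_\lambda$; but this is immediate.
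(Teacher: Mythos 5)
Your proposal is correct and takes essentially the same route as the paper: the ($\Leftarrow$) direction is exactly the paper's combination of Proposition~\ref{prop:separable-os} with Proposition~\ref{prop:presentable-object}, and the ($\Rightarrow$) direction via Proposition~\ref{prop:os-self-colimit} is precisely what the paper means when it declares that direction ``completely analogous'' to Theorem~\ref{thm:separable-banach}. Your explicit remark that the subspace inclusion $c_\lambda$ admitting the right inverse $g$ forces $X = D_\lambda$ is just the step left implicit in the paper's Banach-space argument.
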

\begin{proof}
  $(\Leftarrow)$ Combine Proposition \ref{prop:separable-os} and Proposition
  \ref{prop:presentable-object}.

  $(\Rightarrow)$ Completely analogous to the proof of the same direction in
  Theorem \ref{thm:separable-banach}. 
\end{proof}

\subsection{Local Presentability of $\OS$}  
\label{sub:os-locally-presentable}

We may now combine our previous results in order to prove the main one.

\begin{theorem}
  \label{thm:os-locally-presentable}
  The category $\OS$ is locally countably presentable.
\end{theorem}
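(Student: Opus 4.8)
The plan is to invoke Proposition \ref{prop:locally-presentable-by-generators} with $\alpha = \aleph_1$, which reduces the claim to two ingredients: that $\OS$ is cocomplete, and that it has a strong generating set consisting of countably-presentable objects. Both of these have essentially been established in the preceding subsections, so this final theorem is an assembly of earlier results rather than a fresh argument.

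First I would recall that $\OS$ is cocomplete by Proposition \ref{prop:cocomplete}. Next, I would take the strong generating set to be $\{T_n \mid n \in \mathbb{N}\}$, which is strongly generating by Proposition \ref{prop:strong-generator}. It then remains only to check that each $T_n$ is $\aleph_1$-presentable, i.e. countably-presentable. Since the underlying vector space of $T_n$ is $\MM_n(\mathbb C)$, it is finite-dimensional and therefore separable as a Banach space, hence a separable operator space. By Theorem \ref{thm:presentable-objects}, every separable operator space is countably-presentable, so in particular each $T_n$ is. Applying Proposition \ref{prop:locally-presentable-by-generators} then yields that $\OS$ is locally $\aleph_1$-presentable, which is precisely the assertion that $\OS$ is locally countably presentable.

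I do not expect a genuine obstacle at this stage, since the substantive work is already in place: the explicit description of countably-directed colimits (Construction \ref{constr:aleph1-directed-colimit} together with Proposition \ref{prop:aleph1-directed-colimit}), the characterisation of the countably-presentable objects as exactly the separable ones (Theorem \ref{thm:presentable-objects}), and the identification of a strong generating set (Proposition \ref{prop:strong-generator}). The only point requiring any care is confirming that the chosen generators fall within the class already shown to be countably-presentable, and this is immediate from finite-dimensionality. If one preferred a single strong generator, one could instead use $T(\ell_2)$ via Corollary \ref{cor:trace-class-generator}, noting that the trace-class operators on the separable Hilbert space $\ell_2$ form a separable operator space and are hence countably-presentable; the conclusion drawn from Proposition \ref{prop:locally-presentable-by-generators} would be identical.
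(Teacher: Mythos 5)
Your proposal is correct and follows exactly the paper's own proof: cocompleteness from Proposition \ref{prop:cocomplete}, the strong generating set $\{T_n \mid n \in \mathbb{N}\}$ from Proposition \ref{prop:strong-generator}, countable-presentability of the generators from Theorem \ref{thm:presentable-objects}, assembled via Proposition \ref{prop:locally-presentable-by-generators}. Your explicit check that each $T_n$ is separable (being finite-dimensional) is a detail the paper leaves implicit, but it does not constitute a different argument.
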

\begin{proof}
  The category $\OS$ is cocomplete (Proposition \ref{prop:cocomplete}) and it has a
  strong generator (Proposition \ref{prop:strong-generator}) consisting of
  countably-presentable objects (Theorem \ref{thm:presentable-objects}), so
  the result follows by Proposition \ref{prop:locally-presentable-by-generators}.
\end{proof}

Our development shows that the locally presentably structures of $\OS$ and $\Ban$ are closely related to each other.
Indeed, our proofs build upon results already established in $\Ban$ in order to derive the new results for $\OS$ and
many of the constructions in $\OS$ are completely analogous to those in $\Ban.$
In fact, we can say even more about this close relationship from a categorical
viewpoint. In the theory of locally $\alpha$-presentable categories, there is
an important role that is played by reflective subcategories that are
closed under $\alpha$-directed colimits. We invite the reader to consult
\cite{lp-categories-book} for more information. In fact, the relationship
between $\Ban$ and $\OS$ satisfies this.

To recognise this, recall that a \emph{full} subcategory $\CC$ of $\DD$ is reflective whenever its inclusion $\CC \hookrightarrow \DD$ has a left adjoint.
We have an adjunction (see \eqref{eq:triple-adjunctions})
\cstikz{min-adjunction.tikz}
and the functor $\Min$ is full and faithful. Since $\Min$ is a right
adjoint between locally countably presentable categories, it follows that it
preserves countably-directed colimits \cite[Proposition
2.23]{lp-categories-book}. Let $\mathbf{Min}$ be the full subcategory of $\OS$
consisting of minimal operator spaces, equivalently, operator spaces $X$ such
that $X = \Min(U(X)).$

\begin{corollary}
  We have an isomorphism of categories
  \[ \Ban \cong \mathbf{Min} , \]
  where $\mathbf{Min}$ is a reflective subcategory of $\OS$ closed under
  countably-directed colimits.
\end{corollary}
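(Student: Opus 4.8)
The plan is to extract all three assertions from a single input: the adjunction $U \dashv \Min$ (part of \eqref{eq:triple-adjunctions}), together with the full faithfulness of $\Min$ and the already-noted fact that $\Min$ preserves countably-directed colimits. All three statements then follow by formal manipulation, so the proof is short.

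First I would establish the isomorphism of categories directly, by exhibiting $\Min \colon \Ban \to \mathbf{Min}$ and the restriction $U|_{\mathbf{Min}} \colon \mathbf{Min} \to \Ban$ as mutually inverse functors. This is well-typed because $\Min(X)$ is a minimal operator space for every Banach space $X$, so $\Min$ corestricts to $\mathbf{Min}$. On objects, $U|_{\mathbf{Min}} \circ \Min = U \circ \Min = \Id_{\Ban}$, while $\Min \circ U|_{\mathbf{Min}} = \Id_{\mathbf{Min}}$ since $Y \in \mathbf{Min}$ means exactly $\Min(U(Y)) = Y$. On morphisms the same identities hold: for $\phi \colon Y \to Y'$ in $\mathbf{Min}$ we have $U(\Min(U\phi)) = (U \circ \Min)(U\phi) = U\phi$, and since $U$ is faithful this forces $\Min(U\phi) = \phi$. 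Hence $\Ban \cong \mathbf{Min}$ on the nose.

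Next I would deduce reflectivity. Under this isomorphism the functor $\Min \colon \Ban \to \OS$ is identified with the inclusion $\iota \colon \mathbf{Min} \hookrightarrow \OS$, and I claim its left adjoint is $\Min \circ U \colon \OS \to \mathbf{Min}$. Indeed, for $Z \in \OS$ and $M \in \mathbf{Min}$, writing $M = \Min(U M)$, the adjunction $U \dashv \Min$ and the full faithfulness of $\Min$ give natural bijections
\[ \OS(Z, \iota M) = \OS(Z, \Min(U M)) \cong \Ban(U Z, U M) \cong \mathbf{Min}(\Min(U Z), M), \]
which exhibit $\Min \circ U \dashv \iota$. Thus the inclusion of $\mathbf{Min}$ has a left adjoint, i.e. $\mathbf{Min}$ is reflective in $\OS$; equivalently, this is the general principle that the essential image of a fully faithful right adjoint is reflective.

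Finally I would verify closure under countably-directed colimits. Let $D \colon \Lambda \to \OS$ be countably-directed with every $D_\lambda \in \mathbf{Min}$, so $D = \iota \circ \bar D$ for some $\bar D \colon \Lambda \to \mathbf{Min}$. Transporting along $\Ban \cong \mathbf{Min}$ gives $\hat D \colon \Lambda \to \Ban$ with $\Min \circ \hat D \cong D$. As $\Ban$ is cocomplete, $\hat D$ has a colimit $C$ in $\Ban$, and because $\Min$ preserves countably-directed colimits, $\Min(C)$ is a colimit of $\Min \circ \hat D \cong D$ in $\OS$. By uniqueness of colimits the $\OS$-colimit of $D$ is isomorphic to $\Min(C) \in \mathbf{Min}$, so $\mathbf{Min}$ is closed under countably-directed colimits. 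The only genuinely nonformal ingredient is this last preservation property: a right adjoint such as $\Min$ would ordinarily be expected to preserve limits rather than colimits, and it is precisely the local countable presentability of $\Ban$ and $\OS$ (via \cite[Proposition 2.23]{lp-categories-book}) that makes $\Min$ preserve countably-directed colimits. This is where the main work of the paper is used, while the isomorphism and reflectivity are purely formal.
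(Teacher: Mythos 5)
Your proposal is correct and follows essentially the same route as the paper: the isomorphism via mutual (co)restriction of $\Min$ and $U$, reflectivity from the adjunction $U \dashv \Min$ together with full faithfulness of $\Min$, and closure under countably-directed colimits from the fact that $\Min$, being a right adjoint between locally countably presentable categories, preserves such colimits by \cite[Proposition 2.23]{lp-categories-book}. The paper's proof is a one-line appeal to these same ``arguments presented above''; you have merely spelled out the formal verifications in full.
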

\begin{proof}
  The isomorphism is given by the (co)restriction of the functors $\Min$ and
  $U$ and the rest follows from the arguments presented above.
\end{proof}

\section{Cofree (Cocommutative) Coalgebras}
\label{sec:coalgebras}

In this section we discuss a corollary of the local presentability of $\OS$
relating to cofree (cocommutative) coalgebras. The local presentability of
$\OS$ behaves very well with respect to the \emph{operator space projective
tensor product} (see \cite[Chapter 7]{effros-ruan} and \cite[Section
1.5]{blecher-merdy}). The coalgebras under consideration here are taken with
respect to it. This tensor product is a completion of the algebraic tensor
product with respect to a suitable norm and such that it also enjoys an
important universal property.

Recall that, if $X$ and $Y$ are vector spaces over $\mathbb C$, then the
algebraic tensor product $X\otimes Y$ of $X$ and $Y$ enjoys the following
universal property: there exists a bilinear map $\pi\colon X\times Y\to
X\otimes Y$ such that for each vector space $Z$ and each bilinear map $u\colon
X\times Y\to Z$ there is a \emph{unique} linear map $\tilde u\colon X\otimes
Y\to Z$ such that $\tilde u\circ\pi= u$. We call $\tilde u$ the
\emph{linearization} of $u$. In particular, the linearization of $\pi$ itself
is the identity on $X\otimes Y$. We call elements in the image of $\pi$
\emph{elementary tensors}. In particular, for $x\in X$ and $y\in Y$, we write
$x\otimes y\eqdef \pi(x,y)$.

We can now recall the operator space projective tensor product $\ptimes$ and
the universal property that it enjoys.

\begin{definition}
  \label{def:projective-tensor-operator}
  Let $X$ and $Y$ be operator spaces. For an element $v \in \MM_n(X \otimes Y)$, consider the norm
  \begin{align*}
    \norm{v}_{\wedge} \eqdef \inf \{ \norm \alpha \norm x \norm y \norm \beta \  |\ & p \in \mathbb N, q \in \mathbb N, x \in M_p(X) , y \in M_q(Y), \alpha \in M_{n, pq}, \\
    & \beta \in M_{pq, n} , \text{ and } v = \alpha (x \otimes y) \beta\} .
  \end{align*}
  Here $x\otimes y\in M_{pq}(X\otimes Y)$ is the ``tensor product of matrices'' defined by
  \[x\otimes y\eqdef [x_{ij}\otimes y_{kl}]_{(i,k),(j,l)}.\]
  Note that the expression for the norm immediately yields $\|x\otimes y\|_{\wedge}\leq \|x\|\|y\|$ for such $x\otimes y\in \MM_{pq}(X\otimes Y)$. 
  Less obviously, it is also true that $\|x\otimes y\|_{\wedge} = \|x\|\|y\|.$
  We write $X \ptimes Y$ for the completion of $X \otimes Y$ with respect to the above norm (on $\MM_1(X \otimes Y)$) and say that $X \ptimes Y$ is the 
  operator space projective tensor product\footnote{This does not coincide with the Banach space projective tensor product of $X$ and $Y$. The two tensors are different in general.}
  of $X$ and $Y$. More specifically, $M_n(X \ptimes Y)$ is given by the completion of $\MM_n(X \otimes Y)$ with respect to the above norm and this determines the OSS of $X \ptimes Y.$
\end{definition}

A proof that $X\ptimes Y$ is an operator space can be found in
\cite[1.5.11]{blecher-merdy} or \cite[Section 7.1]{effros-ruan}. To formulate
the universal property of this tensor, we first recall the appropriate kinds of
bilinear maps.

\begin{definition}
  Let $X$, $Y$ and $Z$ be operator spaces. Then a bilinear map $u:X\times Y\to Z$ is called \emph{jointly completely bounded} if there exists a $K\geq 0$ such that for each $n,m\in\mathbb N$ and each $[x_{ij}]\in M_n(X)$ and each $[y_{kl}]\in M_m(Y)$, we have 
    \[ \|[u(x_{ij},y_{kl})]_{(i,k),(j,l)}\|\leq K\| [x_{ij}]\|\|[y_{kl}]\|.\]
  We define $\|u\|_{\mathrm{jcb}}$ to be the least $K$ satisfying the above condition.
  If $\|u\|_{\mathrm{jcb}}\leq 1$, we say that $u$ is \emph{jointly completely contractive}. 
  We write $\JCB(X\times Y, Z)$ for the space of all jointly completely bounded bilinear maps $X\times Y\to Z$. It follows that $\|\cdot\|_{\mathrm{jcb}}$ is a norm on $\JCB(X \times Y, Z)$ and it can also be seen as a matrix norm through the linear isomorphism
  \[ \mathbb M_n(\JCB(X \times Y, Z)) \cong \JCB(X \times Y, M_n(Z)) . \]
\end{definition}

Given two operator spaces $X$ and $Y$, the map
$\pi_{X,Y}\colon X\times Y\to X\otimes Y$ extends to a jointly completely
contractive map $X\times Y\to X\ptimes Y$, which we also denote by
$\pi_{X,Y}$, and which satisfies the following universal property.

\begin{proposition}\cite[1.5.11]{blecher-merdy}\cite[Proposition 7.1.2]{effros-ruan}
  \label{prop:universal-tensor}
  Given operator spaces $X$, $Y$, and $Z$, for any jointly completely
  contractive (bounded) map $u\colon X\times Y\to Z$ there is a unique
  completely contractive (bounded) map $\bar u:X\ptimes Y\to Z,$ such that
  \begin{equation}\label{eq:universal property}
    \bar u\circ\pi_{X,Y} =u,
  \end{equation}
  which is obtained as the unique continuous extension of
  $\tilde u:X\otimes Y\to Z$. Moreover, we have
  $\|u\|_{\mathrm{jcb}}=\|\bar u\|_{\mathrm{cb}}$. In particular, we have a completely
  isometric isomorphism
  \begin{align*}
    JCB(X\times Y,Z) &\cong \CB(X\ptimes Y,Z) \\ 
    u &\mapsto \bar u.
  \end{align*}
\end{proposition}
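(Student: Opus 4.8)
The plan is to build $\bar u$ from the linearization $\tilde u \colon X \otimes Y \to Z$ of $u$ and to show that $\tilde u$ is completely contractive with respect to $\norm{\cdot}_\wedge$, so that it extends continuously to the completion $X \ptimes Y$. First I would invoke the universal property of the algebraic tensor product to obtain the unique linear map $\tilde u$ with $\tilde u \circ \pi_{X,Y} = u$ on elementary tensors; the couniversal map $\bar u$ will then be the continuous extension of $\tilde u$, and most of the work goes into controlling its matrix norms.

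The heart of the argument is to establish that each amplification $(\tilde u)_n \colon \MM_n(X \otimes Y) \to M_n(Z)$ is a contraction when the domain carries the norm $\norm{\cdot}_\wedge$. Fix $v \in \MM_n(X \otimes Y)$ and, given $\epsilon > 0$, choose a factorization $v = \alpha(x \otimes y)\beta$ with $x \in M_p(X)$, $y \in M_q(Y)$, $\alpha \in M_{n,pq}$, $\beta \in M_{pq,n}$ and $\norm\alpha \norm x \norm y \norm\beta < \norm v_\wedge + \epsilon$. Using the standard module identity $(\tilde u)_n(\alpha w \beta) = \alpha (\tilde u)_{pq}(w)\beta$ for scalar matrices $\alpha,\beta$ (which follows from applying $\tilde u$ entrywise and linearity), together with the componentwise computation $(\tilde u)_{pq}(x \otimes y) = [u(x_{ij}, y_{kl})]_{(i,k),(j,l)}$, the joint complete contractivity of $u$ gives $\norm{(\tilde u)_{pq}(x \otimes y)} \leq \norm x \norm y$. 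Then axiom (M2) for $Z$ yields $\norm{(\tilde u)_n(v)} \leq \norm\alpha \norm x \norm y \norm\beta < \norm v_\wedge + \epsilon$, and letting $\epsilon \to 0$ shows $(\tilde u)_n$ is a contraction. For the merely bounded case one replaces the bound $1$ by $\norm u_{\mathrm{jcb}}$ throughout.

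Having shown $\tilde u$ is completely contractive on the dense subspace $X \otimes Y$ of $X \ptimes Y$, I would extend it to $\bar u \colon X \ptimes Y \to Z$ by continuity, using completeness of $Z$. Since $\MM_n(X \otimes Y)$ is dense in $M_n(X \ptimes Y)$ and the amplification $(\bar u)_n$ restricts to $(\tilde u)_n$ there, each $(\bar u)_n$ is again a contraction as a continuous extension of one, so $\bar u$ is completely contractive and satisfies $\bar u \circ \pi_{X,Y} = u$. Uniqueness follows because any completely contractive map with this property must agree with $\tilde u$ on the (generating) elementary tensors and hence, by continuity and density, on all of $X \ptimes Y$. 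This argument also delivers the inequality $\norm{\bar u}_{\mathrm{cb}} \leq \norm u_{\mathrm{jcb}}$.

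For the norm equality and the completely isometric isomorphism, I would argue the reverse inequality $\norm u_{\mathrm{jcb}} \leq \norm{\bar u}_{\mathrm{cb}}$ using the (nontrivial) identity $\norm{x \otimes y}_\wedge = \norm x \norm y$ from Definition \ref{def:projective-tensor-operator}: for $[x_{ij}] \in M_n(X)$, $[y_{kl}] \in M_m(Y)$ one has $[u(x_{ij}, y_{kl})]_{(i,k),(j,l)} = (\bar u)_{nm}(x \otimes y)$, so $\norm{[u(x_{ij}, y_{kl})]} \leq \norm{\bar u}_{\mathrm{cb}} \norm{x \otimes y}_\wedge = \norm{\bar u}_{\mathrm{cb}} \norm x \norm y$. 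The assignment $u \mapsto \bar u$ is linear with inverse $v \mapsto v \circ \pi_{X,Y}$, hence bijective and, by the norm equality, isometric. To promote it to a complete isometry I would apply the isometric statement at each matrix level through the identifications $\MM_n(\JCB(X \times Y, Z)) \cong \JCB(X \times Y, M_n(Z))$ and $\MM_n(\CB(X \ptimes Y, Z)) \cong \CB(X \ptimes Y, M_n(Z))$, and check that these identifications are compatible with $u \mapsto \bar u$. I expect the main obstacle to be this last compatibility check, namely ensuring the matrix-level identifications are natural in $Z$ so that isometry at every level assembles into a complete isometry; the completely contractive extension step in the second paragraph is conceptually the crux, but it is routine once the module identity and axiom (M2) are in hand.
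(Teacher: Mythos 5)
Your proposal is correct, and since the paper states this proposition without proof---deferring entirely to Blecher--Le Merdy (1.5.11) and Effros--Ruan (Proposition 7.1.2)---your argument essentially reproduces the standard proof from those references: contractivity of the amplifications $(\tilde u)_n$ via the factorisation $v = \alpha(x \otimes y)\beta$, continuous extension to the completion (noting $M_n(X \ptimes Y)$ is by definition the completion of $\MM_n(X \otimes Y)$, so density is automatic), and the reverse estimate, for which only the \emph{easy} inequality $\norm{x \otimes y}_\wedge \leq \norm{x}\norm{y}$ is needed, not the nontrivial equality you invoke. The one detail worth flagging is that your appeal to axiom (M2) involves rectangular scalar matrices $\alpha \in M_{n,pq}$, $\beta \in M_{pq,n}$, whereas the paper's Definition \ref{def:operator-space} states (M2) only for square matrices; the rectangular version follows routinely by padding with zeros and using (M1), but the step should be acknowledged.
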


The following proposition captures all of the categorical structure and
properties of the operator space projective tensor product that we need. We
believe that it might be folklore knowledge and the monoidal closure of $\OS$
has been explicitly pointed out by Yemon Choi in an online discussion already
\cite{choi-smcc}. In presenting Proposition \ref{prop:os-smcc}, our intention
is not to claim originality of this result, but to sketch how this can be
proven.

\begin{proposition}
  \label{prop:os-smcc}
  The category $\OS$ has the structure of a symmetric monoidal closed category
  with respect to the operator space projective tensor product. More precisely:
  \begin{itemize}
    \item The monoidal product of operator spaces $X$ and $Y$ is given by $X \ptimes Y$;
    \item The monoidal unit is given by the complex numbers $\mathbb C$;
    \item The internal-hom between operator spaces $X$ and $Y$ is given by the operator space $\CB(X,Y)$;
    \item The left unitor $\lambda_X \colon \C \ptimes X \xrightarrow{\cong} X$ is the completely isometric
      isomorphism uniquely determined by the assignment $a \otimes x \mapsto ax$;
    \item The right unitor $\rho_X \colon X \ptimes \C \xrightarrow{\cong} X$ is the completely isometric
      isomorphism uniquely determined by the assignment $x \otimes a \mapsto ax$;
    \item The associator $\alpha_{X,Y,Z} \colon (X \ptimes Y) \ptimes Z \xrightarrow{\cong} X  \ptimes ( Y \ptimes Z)$ is the completely isometric
      isomorphism uniquely determined by the assignment $(x \otimes y) \otimes z \mapsto x \otimes (y \otimes z)$;
    \item The symmetry $\sigma_{X,Y} \colon X \ptimes Y \xrightarrow{\cong} Y \ptimes X$ is the completely isometric
      isomorphism uniquely determined by the assignment $x \otimes y  \mapsto y \otimes x $.
  \end{itemize}
\end{proposition}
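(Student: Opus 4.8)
The plan is to assemble the symmetric monoidal closed structure almost entirely from the universal property of $\ptimes$ recorded in Proposition \ref{prop:universal-tensor}, using throughout the principle that a complete contraction whose two-sided inverse is also a complete contraction is automatically a completely isometric isomorphism: if $f$ and $g$ are mutually inverse complete contractions, then $\norm{v} = \norm{g_n f_n v} \le \norm{f_n v} \le \norm{v}$ at every matrix level, forcing $f$ to be a complete isometry. First I would upgrade $\ptimes$ to a bifunctor $\OS \times \OS \to \OS$: given complete contractions $f \colon X \to X'$ and $g \colon Y \to Y'$, the bilinear map $(x,y) \mapsto f(x) \otimes g(y) \in X' \ptimes Y'$ is jointly completely contractive (directly from $\norm{f_p(x) \otimes g_q(y)}_\wedge \le \norm{f_p(x)}\,\norm{g_q(y)} \le \norm{x}\,\norm{y}$ together with Definition \ref{def:projective-tensor-operator}), so Proposition \ref{prop:universal-tensor} yields a unique complete contraction $f \ptimes g \colon X \ptimes Y \to X' \ptimes Y'$. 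Functoriality in each variable follows from the uniqueness clause of that proposition, since both composites agree with the relevant jointly completely contractive map on elementary tensors.

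For the closed structure I would exhibit the tensor--hom adjunction $(-\ptimes Y) \adj \CB(Y,-)$. Proposition \ref{prop:universal-tensor} already gives a completely isometric isomorphism $\CB(X \ptimes Y, Z) \cong \JCB(X \times Y, Z)$, and the standard exponential law $\JCB(X \times Y, Z) \cong \CB(X, \CB(Y,Z))$ (curry/uncurry, completely isometric; see \cite{effros-ruan}) composes with it to give $\CB(X \ptimes Y, Z) \cong \CB(X, \CB(Y,Z))$. Restricting to unit balls identifies $\OS(X \ptimes Y, Z) \cong \OS(X, \CB(Y,Z))$, and naturality in $X$ and $Z$ --- which may be checked on elementary tensors and then extended by density --- promotes this to the required adjunction, so $\CB(Y,-)$ is the internal hom.

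Next I would construct the coherence isomorphisms uniformly. For each of $\lambda_X, \rho_X, \alpha_{X,Y,Z}, \sigma_{X,Y}$ I would: (i) check that the indicated assignment on elementary tensors comes from a jointly completely contractive (multilinear) map and hence, via Proposition \ref{prop:universal-tensor}, extends to a complete contraction; (ii) construct a candidate inverse in the same way ($x \mapsto 1 \otimes x$ for $\lambda_X$, $y \otimes x \mapsto x \otimes y$ for $\sigma_{X,Y}$, and so on); and (iii) verify the two composites act as the identity on elementary tensors, which are dense, so by continuity they are genuinely inverse and the map is a completely isometric isomorphism by the remark above. The left unitor uses the jointly completely contractive map $(a,x) \mapsto ax$ with inverse $x \mapsto 1 \otimes x$; the right unitor and symmetry are analogous (indeed $\rho_X = \lambda_X \circ \sigma_{X,\C}$). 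The associator is the one genuine obstacle, because its domain $(X \ptimes Y) \ptimes Z$ is itself a completed tensor product: I would first establish a \emph{trilinear} universal property, namely that jointly completely contractive trilinear maps $X \times Y \times Z \to W$ correspond bijectively and isometrically to complete contractions out of $(X \ptimes Y) \ptimes Z$ (and symmetrically out of $X \ptimes (Y \ptimes Z)$), by applying Proposition \ref{prop:universal-tensor} twice --- the delicate point being that currying a trilinear jointly completely contractive map yields a bilinear jointly completely contractive map on $(X \ptimes Y) \times Z$, which requires passing from elementary to general elements of $X \ptimes Y$ by density. Both iterated tensor products then satisfy the same universal property, so the canonical comparison map is a completely isometric isomorphism sending $(x \otimes y) \otimes z \mapsto x \otimes (y \otimes z)$; this is essentially the associativity of $\ptimes$ from \cite[1.5.11]{blecher-merdy}.

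Finally, I would verify the coherence axioms. The triangle and pentagon identities, and the symmetry axioms (the involution $\sigma_{Y,X} \circ \sigma_{X,Y} = \id$ and the hexagon), are equalities between complete contractions built from the structural maps above; on elementary tensors each reduces to the corresponding identity in $\Vect$ under the algebraic tensor product, which is classical, and since elementary tensors span a dense subspace of every $M_n$ of the relevant tensor products, continuity forces the equalities to hold everywhere. Naturality of $\lambda, \rho, \alpha, \sigma$ is checked the same way. The main effort throughout is the bookkeeping of joint complete contractivity of the defining multilinear maps; the only conceptually nontrivial step is the trilinear universal property underlying the associator.
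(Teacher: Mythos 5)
Your proposal is correct and takes essentially the same route as the paper's (sketch) proof: closure via the completely isometric chain $\CB(X \ptimes Y, Z) \cong \JCB(X \times Y, Z) \cong \CB(X, \CB(Y,Z))$ restricted to unit balls, with bifunctoriality, naturality, and the coherence axioms verified on elementary tensors and extended by density. Your trilinear universal property for the associator is exactly the ``bit more work in terms of universal properties'' that the paper alludes to (deferring to \cite{blecher-merdy}) but omits, so you have filled in the paper's omitted details rather than followed a different approach.
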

\begin{proof} (Sketch).
  The construction of many of these isomorphisms is already outlined in
  \cite[1.5.11]{blecher-merdy} or \cite[Section 7.1]{effros-ruan}. The
  construction of the associator requires a bit more work in terms of universal
  properties, but we omit the details here. The tensor product $(\cdot \ptimes
  \cdot)$ can be easily extended to a bifunctor and it is easy to check the
  naturality of the above isomorphisms by reducing them to elementary tensors.
  The coherence equations for a symmetric monoidal category can also be easily
  checked by reducing them to elementary tensors, where they are obviously
  satisfied. For monoidal closure, we have completely isometric isomorphisms
  \[ \CB(X \ptimes Y, Z) \cong \JCB(X \times Y, Z) \cong \CB(X, \CB(Y,Z)) \qquad \cite[\text{Proposition 7.1.2}]{effros-ruan} \]
  and taking the restrictions to the unit balls gives
  \[ \OS(X \ptimes Y, Z) \cong \OS(X, \CB(Y,Z)) \]
  which is the required isomorphism on the external-homs. Verifying the
  naturality of these isomorphisms is straightforward.
\end{proof}

We can now describe the coalgebras that behave very well with respect to the
locally presentable structure of $\OS.$

\begin{definition}
  \label{def:coalgebra}
  A $\ptimes$-\emph{coalgebra} in $\OS$ is given by the following data:
  \begin{itemize}
    \item an operator space $X$,
    \item a complete contraction $c \colon X \to X \ptimes X$, called \emph{comultiplication},
    \item a complete contraction $d \colon X \to \C$, called \emph{counit},
  \end{itemize}
  such that the following equations hold (we suppress the left/right unitors and the associator for readability):
  \begin{align*}
    (d \ptimes \id_X) \circ c &= \id_X & \text{(left counitality)} \\
    (\id_X \ptimes d) \circ c &= \id_X & \text{(right counitality)} \\
    (c \ptimes \id_X) \circ c &= (\id_X \ptimes c) \circ c & \text{(coassociativity)}
  \end{align*}
  If, moreover, the following equation holds:
  \begin{align*}
    \sigma_{X,X} \circ c      &= c & \text{(cocommutativity)}
  \end{align*}
  then we say that the $\ptimes$-coalgebra $(X, c, d)$ is \emph{cocommutative}.
\end{definition}

Our results extend only to (cocommutative) coalgebras with respect to
$\ptimes$, so going forward we simply refer to them as (cocommutative)
coalgebras without explicitly mentioning the tensor product. These
kinds of coalgebras also form categories that we define next.

\begin{definition}
  Let $(X,c,d)$ and $(Y,c',d')$ be two coalgebras in $\OS$. A \emph{coalgebra morphism} is a complete contraction $f \colon X \to Y,$ such that:
  \begin{align*}
    (f \ptimes f)  \circ c& = c' \circ f \\
    d'  \circ f& = d
  \end{align*}
  We write $\Coalg$ for the category whose objects are the coalgebras in $\OS$ and whose morphisms are the coalgebra morphisms between them.
  We write $\CoCoalg$ for the full subcategory of $\Coalg$ consisting of cocommutative coalgebras.
\end{definition}

Using the already established results, we can now conclude that these two categories are quite nice.

\begin{theorem}
  The category $\Coalg$ is locally presentable and symmetric monoidal closed. The category $\CoCoalg$ is locally presentable and cartesian closed.
  Furthermore, we have adjunctions
  \begin{equation}
    \label{eq:cofree}
    \stikz{free-coalgebra-noncommutative-adjunction.tikz}
  \end{equation}
  and
  \begin{equation}
    \label{eq:cocommutative-cofree}
    \stikz{free-coalgebra-adjunction-os.tikz}
  \end{equation}
  where the \emph{left} adjoints $V$ and $U$ are the forgetful functors.
\end{theorem}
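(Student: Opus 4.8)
The plan is to deduce this theorem as a corollary of the general results of Porst \cite{porst-coalgebras}, specialised to the base category $\CC = \OS$. The essential hypotheses are already in hand: we have proved that $\OS$ is locally presentable (Theorem \ref{thm:os-locally-presentable}) and that it is symmetric monoidal closed with respect to $\ptimes$ (Proposition \ref{prop:os-smcc}). Since $\Coalg$ and $\CoCoalg$ are, by Definition \ref{def:coalgebra}, exactly the categories of comonoids and cocommutative comonoids in this monoidal category, it remains only to quote the appropriate statements from \cite{porst-coalgebras} and assemble them.

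For the structural claims, I would invoke Porst's theorems that for any locally presentable symmetric monoidal closed category, the category of comonoids is again locally presentable and symmetric monoidal closed, while the category of cocommutative comonoids is locally presentable and cartesian closed. On the cocommutative side the cartesian structure is the classical fact that in a symmetric monoidal category the tensor product of two cocommutative comonoids is their categorical product, with $\C$ as terminal object, the projections induced by the counits and the diagonal induced by the comultiplication; the substantive point, namely the existence of exponentials, is supplied by Porst using local presentability. Applying these results with $\CC = \OS$ yields the first two sentences of the statement directly.

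For the adjunctions, I would show that the forgetful functors $V \colon \Coalg \to \OS$ and $U \colon \CoCoalg \to \OS$ are left adjoints by means of the adjoint functor theorem recalled in the introduction: their domains are locally presentable by the previous step and the codomain $\OS$ is locally small, so it suffices to check that $V$ and $U$ are cocontinuous. In fact both forgetful functors \emph{create} colimits. Given a diagram of coalgebras, one takes the colimit $X$ of the underlying operator spaces with injections $q_\lambda$, defines the counit $d \colon X \to \C$ from the cocone of counits, and induces the comultiplication $c \colon X \to X \ptimes X$ from the cocone whose components are $(q_\lambda \ptimes q_\lambda) \circ c_\lambda \colon D_\lambda \to X \ptimes X$; the coalgebra axioms, the cocommutativity in the $\CoCoalg$ case, and the requisite universal property are then all verified by precomposing with the $q_\lambda$ and invoking only the functoriality of $\ptimes$, the naturality of $\sigma$, and the universal property of the colimit $X$. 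Hence $V$ and $U$ are cocontinuous, their right adjoints $R$ exist, and these right adjoints send an operator space to the cofree (cocommutative) coalgebra over it.

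The main obstacle is not a new mathematical difficulty but rather the bookkeeping needed to confirm that our conventions --- complete contractions as coalgebra morphisms, $\ptimes$ as the comultiplication target, and $\C$ as counit target --- align exactly with Porst's framework, so that the citations apply without modification. The genuinely hard content, namely the local presentability of the coalgebra categories and the construction of their internal homs and exponentials, is precisely what \cite{porst-coalgebras} establishes; our contribution toward the hypotheses is the local presentability of $\OS$ proved earlier, together with the symmetric monoidal closed structure of Proposition \ref{prop:os-smcc}.
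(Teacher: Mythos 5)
Your proposal is correct and follows essentially the same route as the paper: both deduce the theorem from Porst's results in \cite{porst-coalgebras}, using exactly the two hypotheses established earlier (local presentability of $\OS$ from Theorem \ref{thm:os-locally-presentable} and symmetric monoidal closure from Proposition \ref{prop:os-smcc}), with the same note that Porst's ``comonoids'' are the paper's ``coalgebras''. Your additional sketch of cocontinuity of the forgetful functors via creation of colimits and the adjoint functor theorem is a sound expansion of what the cited reference supplies, not a deviation from the paper's argument.
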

\begin{proof}
  The category $\OS$ is locally presentable (Theorem
  \ref{thm:os-locally-presentable}) and symmetric monoidal closed (Proposition
  \ref{prop:os-smcc}), so the proof follows using purely categorical arguments,
  see \cite[pp. 10 and pp. 13]{porst-coalgebras} and note that the notion of
  ``comonoid'' in that paper coincides with our use of ``coalgebra''.
\end{proof}

Adjunction \eqref{eq:cofree} is perhaps of greater interest to operator
algebraists, whereas adjunction \eqref{eq:cocommutative-cofree} could be of
greater interest to some logicians, because it is a model of Intuitionistic
Linear Logic \cite{linear-logic} in the sense of Lafont \cite{lafont-thesis}.
If we unpack the structure of these adjunctions, they show us
that the \emph{cofree (cocommutative) coalgebra} over an operator space $X$
always exists. Let us explain what this means in more detail
(see \cite[\S 7.2]{mellies-linear-logic} for more information).

If $X$ is an operator space, then $GX$ is a coalgebra for which there exists a canonical complete contraction
$\epsilon_X \colon GX \to X$, such that for any coalgebra $C$ and any complete contraction $f \colon C \to X$,
there exists a \emph{unique} coalgebra morphism $\hat f \colon C \to GX$, such that the following diagram
\cstikz{cofree-property.tikz}
commutes. The cofree cocommutative coalgebra may be described in a similar way.

\section{Conclusion}
\label{sec:conclusion}

We showed that the category $\OS$ is locally countably presentable and we
characterised the countably-presentable objects as exactly the separable
operator spaces. In the process, we also showed that the separable Banach
spaces coincide with the countably-presentable objects in $\Ban$. Our main
results are not surprising, but they do require some lengthy technical effort
related to the construction of countably-directed colimits. We hope that the
result related to cofree (cocommutative) coalgebras can illustrate the utility
of categorical methods when studying operator spaces.  The right adjoints, $G$
and $R$, are shown to exist via the adjoint functor theorem, which relies on
the axiom of choice, and we think that proving the existence of these cofree
(cocommutative) coalgebras can be difficult to achieve with more traditional
methods from functional analysis and operator space theory. By using methods
from category theory, we were able to prove their existence by relying on
already established categorical results.

Thea Li completed an internship project \cite{thea-internship} under the
supervision of the second author where she proved that the category of
\emph{finite-dimensional} operator spaces and complete contractions is
$*$-autonomous, it is a model of multiplicative additive linear logic, and it
also has the structure of a BV-category \cite{bv-category} with respect to the
Haagerup tensor product.  As part of future work, we would like to identify a
\emph{subcategory} of $\OS$ which is also $*$-autonomous but with the
additional property of being a model of (full) classical linear logic in the
sense of Lafont, i.e. where we again have an adjoint situation with the
category of cocommutative coalgebras like in \eqref{eq:cocommutative-cofree}.

\mbox{}

\noindent\textbf{Acknowledgements.} We thank Thea Li, James Hefford, and
Jean-Simon Pacaud Lemay for discussions. 

\newpage

\bibliographystyle{plain}
\bibliography{refs}

\end{document}